\newcommand{\stkout}[1]{\ifmmode\text{\sout{\ensuremath{#1}}}\else\sout{#1}\fi}
\newtheorem{theorem}{Theorem}[subsection]
\newtheorem*{theorem*}{Theorem}
\newcounter{tmp}
\newtheorem{proposition}{Proposition}
\newtheorem{corollary}[theorem]{Corollary}
\newtheorem{definition}[proposition]{Definition}
\newtheorem{lemma}[proposition]{Lemma}
\newtheorem{remark}{Remark}
\newcommand{\Ind}{\mathrm{Ind}}
\newcommand{\ZZ}{\mathbb{Z}}
\newenvironment{spmatrix}{\left(\begin{smallmatrix}}{\end{smallmatrix}\right)}
\title{Some Combinatorics in the Cancellation of Poles of Eisenstein Series for $GL(n,\mathbb{A}_\mathbb{Q})$}
\author{Zhuohui Zhang\\Tel Aviv University}
\email{zhuohui.zhang.ru@gmail.com}
\begin{document}

\begin{abstract}
    The cancellations of poles of degenerate Eisenstein series were studied by Hanzer and Mui\'{c}. This paper generalized the result to Eisenstein series constructed from inducing two Speh representations $\Delta(\tau,m)|\cdot|^{s_1}\otimes\Delta(\tau,n)|\cdot|^{s_2}$ for the group $GL(m+n,\mathbb{A}_\mathbb{Q})$ for self-dual cuspidal automorphic representation $\tau$ by describing the combinatorics of the relevant Weyl group coset.
\end{abstract}
\maketitle
\section{Introduction}
In \cite{moeglin1989spectre}, M\oe glin and Waldspurger described the residual automorphic spectrum of $GL(N)$. A residual automorphic representation can always be realized as a \emph{generalized Speh representation} $\Delta(\tau, n)$, where $\tau$ is an irreducible unitary cuspidal automorphic representation of the group $GL(a)$ with $a$ satisfying $N = an$. For convenience, we will assume the cuspidal automorphic representation $\tau = \bigotimes_v \tau_v$ has a unitary central character, and $\tau_v$ is tempered at each local place $v$. The automorphic representation $\Delta(\tau,n)$ is a global Langlands quotient of a principal series representation, and can be realized as the automorphic representation generated by the residue of an Eisenstein series. 
For any partition $\underline{N} = (N_1,\ldots,N_r)$ of $N$, denoting the standard parabolic subgroup with Levi subgroup $M_{\underline{N}}$ isomorphic to $GL(N_1)\times\ldots GL(N_r)$ by $P_{\underline{N}} = M_{\underline{N}}U_{\underline{N}}$, we can choose cuspidal automorphic representations $\tau_1,\ldots,\tau_r$ of $GL(a_i)$, where $a_in_i = N_i$, and construct the principal series representation
\[
    I(\underline{\tau},\underline{s}) = \Ind_{P_{\underline{N}}}^G\left(\Delta(\tau_1,n_1)|\cdot|^{s_1}\otimes \ldots\otimes \Delta(\tau_r, n_r)|\cdot|^{s_r}\right).
\]
\indent This paper investigates the poles of the Eisenstein series constructed from a section in $I(\underline{\tau},\underline{s})$ with isomorphic cuspidal data $\tau_1 = \tau_2 = \tau$ in the case $r = 2$.

\begingroup
    \setcounter{tmp}{\value{theorem}}
    \setcounter{theorem}{0}
    \renewcommand\thetheorem{\Alph{theorem}}
\begin{theorem}\label{thma}
    Fixing an irreducible unitary self-dual cuspidal automorphic representation $\tau$, we construct the principal series
    \[
        I(\tau,\underline{s}) = \Ind_{P_{\underline{N}}}^G\left(\Delta(\tau,m)|\cdot|^{s_1}\otimes \Delta(\tau, n)|\cdot|^{s_2}\right)
    \]
    where $\underline{N} = (m,n)$. In the region $s_1-s_2\geq 0$, the Eisenstein series 
    \[
        E(\cdot,\underline{s}): I(\tau,\underline{s})\longrightarrow\mathcal{A}_{G_N}
    \]
    has poles of maximal order 1 at $s_1-s_2 \in \frac{m+n}{2} - \{0,1,\ldots,\min\{m,n\}-1\}$.
\end{theorem}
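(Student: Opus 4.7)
The plan is to analyze $E(\cdot, \underline{s})$ through its constant term along the opposite parabolic $P' = P_{(n,m)}$, which equals $M(w_0, \underline{s})\phi$ for the unique block-swap element $w_0$. Hence the poles of $E(\cdot, \underline{s})$ coincide with the poles of the global intertwining operator $M(w_0, \underline{s})$. To compute this operator, I would realize $\Delta(\tau, m)|\cdot|^{s_1} \otimes \Delta(\tau, n)|\cdot|^{s_2}$ as an iterated residue of the cuspidal induced representation from the parabolic $P_a$ of $G = GL((m+n)a)$ with Levi $GL(a)^{m+n}$ and exponents
\[
    \mu_i = s_1 + \tfrac{m+1}{2} - i \quad (1 \leq i \leq m), \qquad \mu_{m+j} = s_2 + \tfrac{n+1}{2} - j \quad (1 \leq j \leq n).
\]
Under this realization, $M(w_0, \underline{s})$ is the restriction of the cuspidal block-swap intertwiner $M(w_0, \underline{\mu})$ to the Speh image.

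By Gindikin--Karpelevich, $M(w_0, \underline{\mu})$ factorizes as a product of $mn$ rank-one intertwiners indexed by the inversions $(i, j)$ with $1 \leq i \leq m < j \leq m+n$. The global normalizing factor reads
\[
    \prod_{i = 1}^{m} \prod_{j = m+1}^{m+n} \frac{L(\mu_i - \mu_j, \tau \times \tilde{\tau})}{L(\mu_i - \mu_j + 1, \tau \times \tilde{\tau})}.
\]
Since $\tau$ is self-dual cuspidal, $L(s, \tau \times \tilde{\tau})$ has a simple pole at $s = 1$ and is otherwise holomorphic and nonzero on $\mathrm{Re}(s) \geq 1$. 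The numerator therefore contributes a simple pole precisely when $\mu_i - \mu_j = 1$, and the denominator contributes a simple zero precisely when $\mu_i - \mu_j = 0$; the normalized rank-one intertwiners are expected to be holomorphic and nonzero on $s_1 - s_2 \geq 0$ given the tempered hypothesis on $\tau_v$.

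A direct calculation yields $\mu_i - \mu_{m+j} = m - k + j - i$ at $s_1 - s_2 = \tfrac{m+n}{2} - k$, so $\mu_i - \mu_{m+j} = 1$ has exactly $k+1$ solutions (the diagonal strip $i - j = m - k - 1$, with $j = 1, \ldots, k+1$) and $\mu_i - \mu_{m+j} = 0$ has exactly $k$ solutions (the adjacent diagonal $i - j = m - k$, with $j = 1, \ldots, k$) inside the bipartite grid $\{1,\ldots,m\} \times \{1,\ldots,n\}$. Consequently $M(w_0, \underline{\mu})$ acquires a net pole of order $(k+1) - k = 1$ at each of the targets. The remaining combinatorial step is to confirm that the iterated residue process producing the two Speh factors --- carried out by specializing the within-block exponent differences to $1$ and extracting residues --- neither introduces extra pole factors on the block-swap strip nor cancels the surviving simple pole; this is governed by the braid relations among the rank-one intertwiners, which commute the Speh-defining residues (internal to each block) past the block-swap product.

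The main obstacle will be establishing the non-vanishing of the residue at each target and the holomorphy/non-vanishing of the normalized intertwiners across the region. I plan to identify the image of the residue at $s_1 - s_2 = \tfrac{m+n}{2} - k$ with the induced representation built from $\Delta(\tau, m+n-k) \otimes \Delta(\tau, k)$, whose irreducibility in the self-dual tempered setting forces the residue to be a nonzero intertwiner. Combining the net pole count $(k+1) - k = 1$ with this non-vanishing argument then yields the claimed location and order of the poles in $s_1 - s_2 \geq 0$.
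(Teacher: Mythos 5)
Your strategy is genuinely different from the paper's. The paper realizes the Speh section inside the full cuspidal induction $I(m,n,\underline s)$ from the parabolic with Levi $GL(a)^{m+n}$, computes the constant term along that minimal parabolic (Lemma~\ref{lemb}), obtains a sum over $\binom{m+n}{m}$ Weyl cosets $\mathbb S_m\times\mathbb S_n\backslash\mathbb S_{m+n}$, and then spends most of its effort organizing these into orbits under permissible moves and showing (via the functional equation of $L(s,\tau\times\hat\tau)$ and careful head/tail bookkeeping) that each orbit sum $R(w,\underline s)$ has a pole of order at most $1$. You instead try to work directly with the Speh inducing datum and a \emph{single} intertwining operator $M(w_0,\underline s)$. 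Your pole/zero count in the Gindikin--Karpelevich product is carried out correctly and does reproduce the location $s_1-s_2=\tfrac{m+n}{2}-k$, $0\le k\le\min\{m,n\}-1$, with net order $(k+1)-k=1$; this is exactly the numerology the paper encodes through the ``head and tail'' intervals.

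However, there are two substantive gaps. First, the opening step ``$c_{P'}E(\cdot,\underline s)=M(w_0,\underline s)\phi$'' is the cuspidal constant term formula of M\oe glin--Waldspurger II.1.7, and $\Delta(\tau,m)\boxtimes\Delta(\tau,n)$ is \emph{not} cuspidal. When the inducing representation has nonvanishing constant terms along proper parabolics of the Levi, extra Bruhat cells $w$ with $w^{-1}M_{(n,m)}w\not\supset M_{(m,n)}$ contribute nontrivially to $c_{P'}E$; this is precisely why the paper descends to the $GL(a)^{m+n}$-parabolic, where the datum \emph{is} cuspidal, at the cost of the larger coset sum and the orbit bookkeeping. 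Without handling those extra terms you obtain at best a lower bound for the pole, not the upper bound ``maximal order $1$'' in the statement, since a priori the other cells might contribute higher-order poles that only cancel in the sum --- establishing that cancellation is the content of Lemma~\ref{lemc} and the $A_p(t)+B_p(t)$ analysis. Second, you explicitly defer the holomorphy and nonvanishing of the normalized operator $N(w_0,\underline s)$ on the Speh datum and the compatibility of the Speh-defining iterated residues with the block-swap factorization (``governed by the braid relations''). These are exactly the points where the restriction to a residual subquotient can introduce or destroy poles, so they are not merely routine; the paper's version of this argument is the diagram chase in the proof of Corollary~\ref{corb}, where the nonvanishing of $N(w_\sigma w_1,\underline\Lambda)$ is derived from the factorization $N(w_1,\underline\Lambda)=N(w_\sigma^{-1},\cdot)N(w_1',\cdot)N(w_\sigma',\cdot)$. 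As written, your proposal identifies the right answer and the right $L$-function mechanism but leaves the two structural points (constant term formula for non-cuspidal data, nonvanishing of the normalized operator) as unproved assumptions, and these are precisely what the paper's combinatorial machinery is built to supply.
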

The proof of this theorem is a generalization of the combinatorial method in \cite{hanzer2015images} to cuspidal automorphic induction data. The residues of this Eisenstein series are described in the following corollary:
\begingroup
    \setcounter{tmp}{\value{theorem}}
    \setcounter{theorem}{1}
    \renewcommand\thetheorem{\Alph{theorem}}
\begin{corollary}\label{corb}
    The residue of Eisenstein series $\mathrm{Res}_{s = \frac{m+n}{2}-\sigma} E(\cdot,\underline{s})$ defines an intertwining operator sending $I(\tau,\underline{s})$ to the irreducible representation $\mathrm{Ind}_{P_{[m,n]}}^{G_{m+n}}\left(\Delta(\tau,m+n-\sigma)|\cdot|^{-\frac{m-n}{2}}\boxtimes \Delta(\tau,\sigma)|\cdot|^{\frac{m-n}{2}}\right)$.
\end{corollary}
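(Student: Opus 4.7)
The plan is to identify the image of the residue map by computing the constant term along the maximal parabolic whose Levi carries the candidate representation, and then to recognize the resulting automorphic representation through its cuspidal exponents via the Mœglin--Waldspurger classification of residual spectra of $GL$.

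First, by Theorem~\ref{thma}, the Eisenstein series has a simple pole at $s_1 - s_2 = \frac{m+n}{2} - \sigma$ for $\sigma\in\{0,1,\ldots,\min(m,n)-1\}$. Its residue $\mathrm{Res}_{s_1-s_2 = \frac{m+n}{2}-\sigma}E(\cdot, \underline{s})$ is therefore a nonzero $G_{m+n}$-equivariant map from $I(\tau,\underline{s}_0)$ to $\mathcal{A}_{G_N}$, where $\underline{s}_0$ denotes the polar hyperplane. Its image is determined by its constant terms along all standard parabolics, and the distinguished parabolic is the maximal one whose Levi matches the tuple $(\Delta(\tau,m+n-\sigma), \Delta(\tau,\sigma))$ appearing in the corollary.

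Second, I would write this constant term as a finite sum of standard intertwining operators $M(w,\underline{s})$ indexed by a Weyl double coset, and determine which summand survives the residue operation. The Gindikin--Karpelevich factorization of each local intertwining operator into ratios of $L$-functions, combined with the combinatorial analysis of Weyl double cosets carried out in the proof of Theorem~\ref{thma}, isolates a single representative $w_\sigma$ whose operator has a simple pole at $s_1 - s_2 = \frac{m+n}{2} - \sigma$; all other contributions are regular there. Thus the residue is realized concretely as the residue of $M(w_\sigma,\underline{s})$ applied to the inducing section.

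Third, I would identify this residue with the projection onto a specific Langlands quotient. The union of the two cuspidal-support multisets $\{\tau|\cdot|^{s_1 + \frac{m-1}{2}-j}\}_{j=0}^{m-1}$ and $\{\tau|\cdot|^{s_2 + \frac{n-1}{2}-j}\}_{j=0}^{n-1}$ at the special value $s_1-s_2 = \frac{m+n}{2}-\sigma$ reorganizes into two Zelevinsky-type segments of lengths $m+n-\sigma$ and $\sigma$, centered at the twists $-\frac{m-n}{2}$ and $\frac{m-n}{2}$ respectively. Matching these segments to the Mœglin--Waldspurger description of Speh representations identifies the image of the residue with $\Delta(\tau, m+n-\sigma)|\cdot|^{-\frac{m-n}{2}}\boxtimes \Delta(\tau,\sigma)|\cdot|^{\frac{m-n}{2}}$. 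Irreducibility of the induced representation in the statement then follows from the standard criterion that parabolic induction of two Speh representations is irreducible whenever the underlying segments are unlinked, a condition verified directly at every local place for $\sigma$ in the admissible range.

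The main obstacle I expect is the exponent bookkeeping in the third step: one must verify that the residue of the normalized intertwiner $M(w_\sigma,\underline{s})$ projects onto precisely the claimed Langlands quotient, and not onto a different irreducible constituent of the cosocle having the same cuspidal support. Resolving this requires evaluating the intertwiner on well-chosen test vectors and tracking the action of $w_\sigma$ on each of the $m+n$ cuspidal blocks separately. This is the same level of combinatorial control supplied by the Weyl-coset analysis used to establish Theorem~\ref{thma}, so once $w_\sigma$ is pinned down the identification should reduce to a mechanical verification place by place.
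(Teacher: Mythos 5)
Your overall plan shares the paper's broad strategy---exploit the simple pole from Theorem~\ref{thma}, pass to a constant-term/intertwining-operator computation, and then identify the image via segments---but it omits the key technical device that makes the identification work, and the way you propose to fill the gap would not suffice.

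The paper lifts everything to the full principal series $I(\underline{\Lambda})=\mathrm{Ind}_{P_{m+n}}^{G_{m+n}}\bigl(\tau|\cdot|^{s_1}\boxtimes\cdots\boxtimes\tau|\cdot|^{t_n}\bigr)$ induced from the minimal parabolic, introduces four explicit Weyl elements $w_1,w_\sigma,w_\sigma',w_1'$ satisfying $w_\sigma w_1 = w_1'w_\sigma'$, and builds a commuting square of normalized intertwining operators. That square does two jobs simultaneously: it shows $N(w_\sigma w_1,\underline{\Lambda})\neq 0$ by factoring $N(w_1,\underline{\Lambda})$ through it, and it identifies the image of $N(w_\sigma w_1,\underline{\Lambda})$ with the irreducible $I_\sigma$. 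Since the image of $N(w_1,\underline{\Lambda})$ recovers $I(\tau,\underline{s})$ as a submodule of $I(w_1\underline{\Lambda})$, the residue operator restricted to $I(\tau,\underline{s})$ is forced (up to nonzero scalar) to coincide with $N(w_\sigma,\cdot)$, whose image is $I_\sigma$. This is what resolves the ``main obstacle'' you flag at the end of your proposal.

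Your step~3 gives the right heuristic for guessing the answer---the cuspidal exponents reassemble into the union segment of length $m+n-\sigma$ and the intersection segment of length $\sigma$, and these are nested hence unlinked---but matching cuspidal support does not by itself determine which constituent of the cosocle the residue actually hits. You acknowledge this, then assert it ``should reduce to a mechanical verification place by place.'' That is precisely the missing content: nothing in your plan forces the residue onto $I_\sigma$ rather than onto another quotient with the same support, and the local, test-vector route you gesture at does not provide the needed global factorization. Relatedly, your claim in step~2 that the pole is carried by a single representative $w_\sigma$ while ``all other contributions are regular'' is an oversimplification: the paper's constant-term analysis along the minimal parabolic shows that the surviving terms are all $M(w,\underline{\Lambda})f$ with $w=w_2w_1$ for $w_2$ in a whole family of Weyl elements satisfying an explicit ordering condition, and the argument that these all land in the same $I_\sigma$ is exactly the nontrivial step supplied by the commuting square of intertwining operators. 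Without that square (or an equivalent global factorization of $w_\sigma w_1$), the identification of the image remains unproven.
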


\section{Principal Series and Eisenstein Series}
Throughout this section, we denote the space of automorphic functions on $G_n = GL(na)$ by $\mathcal{A}_{G_n}$. The standard parabolic subgroup with Levi subgroup isomorphic to $GL(a)^{\times n}$ is denoted by $P_{n} = M_{n}U_{n}$.

\subsection{Induction from Cusp Forms}\label{cuspinduction}
The cuspidal automorphic representation $\tau$ on the group $GL(a)$ can be decomposed as a restricted tensor product $\tau = \bigotimes_{v}\tau_v$ of irreducible unitary representations $\tau_v$ over local fields $\mathbb{Q}_v$. One can choose a pure tensor vector $f = \otimes_v f_v$ which realizes the completed automorphic $L$-function as the normalization factor of the intertwining operator (for the construction of such a vector c.f. \cite{shahidi1984fourier}). Thus, if we consider the tensor product $\bigotimes_{k=1}^{n}\tau|\cdot|^{\nu_i}$ as a representation on the group $GL(a)^{\times n}$, we can still choose a pure tensor $\bigotimes_{k=1}^n\left(\otimes_v f_{k,v}\right)\in\tau$ satisfying the same property. This vector can be realized as an automorphic function $f_{\underline{\nu}}\in \bigotimes_{k=1}^n \mathcal{A}_{G_a}$. By the Iwasawa decomposition of $G_n = GL_{na}$, we can choose a section $\tilde{f}_{\underline{\nu}}\in \Ind_{P_n}^G\left(\bigotimes_{k=1}^{n}\tau|\cdot|^{\nu_i}\right)$ of the principal series satisfying
\[
    \tilde{f}_{\underline{\nu}}(umk) = f_{\underline{\nu}}(m)|m|^{\underline{\nu}}\tilde{f}_{\underline{\nu}}(k),
\]
where $|m|^{\nu} = |\det m_1|^{\nu_1}\ldots |\det m_n|^{\nu_n}$ for any $m\in M_{[a^n]}$. The Eisenstein series
\[
    E_n(f_\nu,\underline{\nu})(g) = \sum_{\gamma\in P_n(k)\backslash G(k)} \tilde{f}_{\underline{\nu}}(\gamma g)
\]
defines an intertwining operator between $G_n$ representations:
\begin{equation}\label{eis}
    E_n(\cdot,\underline{\nu}): I(\tau,\underline{\nu}):=\Ind_{P_n}^{G_{n}}\left(\bigotimes_{k=1}^{n}\tau|\cdot|^{\nu_i}\right) \longrightarrow\mathcal{A}_{G_{n}},
\end{equation}
and can be meromorphically continued to the whole $\mathbb{C}^n$ from the domain of convergence $\mathrm{Re}(\nu_i-\nu_j) > 0$ for $i<j$.

\subsection{The Rankin-Selberg \texorpdfstring{$L$}{L}-Function}
The Langlands-Shahidi method can be applied to define the Rankin-Selberg $L$-function $L(s,\tau\times\hat\tau)$ described in \cite{jacquet1983rankin} for any cuspidal automorphic representation $\tau$ as normalization factors of intertwining operators. An exposition of such method can be found in \cite[Chapter 5]{shahidi2010eisenstein}. We will be mostly following the setup in \cite[Appendice]{moeglin1989spectre}, but will also point out other properties of the Rankin-Selberg $L$-function $L(s,\tau\times\hat{\tau})$ which we will be using in this paper.\\

\indent We consider two irreducible unitary cuspidal automorphic representations $\tau_1,\tau_2$ on $GL(n_1,\mathbb{A})$ and $GL(n_2,\mathbb{A})$, respectively. The constant term of the Eisenstein series
\[
    E_{n_1,n_2}(\cdot, \underline{s}): \Ind_{P_{[n_1,n_2]}}^{G_{n_1+n_2}}(\tau_1|\cdot|^{\frac{s}{2}}\otimes\tau_2|\cdot|^{-\frac{s}{2}})\longrightarrow \mathcal{A}_{G_{n_1+n_2}}
\]
will have only one summand unless if $n_1=n_2$. In any case, following \cite{shahidi1983local}, the normalization factor $r(s,w_0)$ of the intertwining operator $M(s,w_0)$ can be expressed as
\[
    r(s,w_0) = \frac{L(s,\tau_1\times\hat{\tau}_2)}{L(1+s,\tau_1\times\tau_2)\epsilon(s,\psi,\tau_1\times\hat{\tau}_2)}
\]
where $L(s,\tau_1\times\hat{\tau}_2)$ is the completed Rankin-Selberg $L$-function with its local factors described in \cite{jacquet1983rankin}. In \cite[Appendice, Corollaire]{moeglin1989spectre}, it is shown that the function $L(s,\tau_1\times{\hat{\tau}_2})$ is entire unless $n_1=n_2$ and there exists a complex number $t$ such that $\tau_1 = \tau_2|\cdot|^{t}$. The poles of the function $L(s,\tau\times\hat{\tau})$ are simple and are located at $s=0,1$. A zero-free region of $L(s,\tau\times \hat{\tau})$ is given in \cite{goldfeld2018standard} by
\[
    \Re(s) > 1-\frac{c}{(\log(|\Im(s)|+2))^5}.
\]
A consequence of the zero-free region given above is that there are no zeros in the region $\Re(s)\geq 1$ and $\Re(s)\leq 0$. Therefore, one expects no cancellations of poles between the numerators and the denominators apart from the cancellations from the zeros on the critical strip.

\subsection{Residues of Eisenstein Series and Speh Representation}
\noindent In this section, we summarize the key facts concerning the poles and residues of the Eisenstein series $E_n(\cdot,\nu)$ defined in (\ref{eis}) from the previous section. By the Langlands constant term formula \cite[II.1.7]{moeglin1995spectral} and the cuspidality of $\tau$, the constant term along the unipotent radical of $U_n$ of the parabolic subgroup $P_n$ can be expressed as
\[
    c_{U_n} E_n(f,\underline{\nu})(g) = \sum_{w\in W_{a}^{\times n}\backslash W_{na}\cong \mathbb{S}_n} M(w,\underline{\nu}) \tilde{f}(g).
\]
Each intertwining operator $M(w,\underline{\nu})$ is defined in the dominant cone $\mathrm{Re}(\nu_i-\nu_j) \gg 0$, where $i<j$, as the following formal integral
\[
    M(w,\underline{\nu}) \tilde{f}(g) = \int_{U_n\cap w^{-1}U_n w\backslash U_n}\tilde{f}(wng)dn.
\]
The operator $M(w,\underline{\nu})$ can be meromorphically continued to the whole $\mathbb{C}^n$. By \cite[Appendice]{moeglin1989spectre}, each summand $M(w,\underline{\nu}) \tilde{f}(g)$ has the same set of poles as the function $r(w,\underline{\nu})$ known as the \emph{normalization factor}:
\[
    r(w,\underline{\nu}) = \prod_{\substack{i < j\\w(i)>w(j)}} \frac{L(\nu_i-\nu_j,\tau\times\hat{\tau})}{L(1+\nu_i-\nu_j,\tau\times\hat{\tau})\epsilon(\nu_i-\nu_j,\tau\times\hat{\tau},\psi)}.
\]
The normalized intertwining operator $N(w,\underline{\nu}) = r(w,\underline{\nu})^{-1} M(w,\underline{\nu})$ is holomorphic in the region $\mathrm{Re}(\nu_i-\nu_j)\geq 0$ with $i<j$.\\

The pole of the highest possible codimension $r(w,\underline{\nu})$ occurs when $w(i)>w(j)$ for all $i<j$. In this case, the normalization factor $r(w,\underline{\nu})$ has a simple pole at the intersection of the hyperplanes $\nu_{i} - \nu_{i+1} = 1$, i.e. at the point $\underline{\nu} = -\underline{\lambda}_n = \left(\frac{n-1}{2},\frac{n-3}{2},\ldots, \frac{1-n}{2}\right)$ (pay attention to the minus sign). We can take the residue
\[
    \mathrm{Res}_{n} E_n(f,\underline{\nu}) = \lim_{\underline{\nu}\rightarrow -\underline{\lambda}_n} \prod_{i=1}^{n-1} (\nu_{i}-\nu_{i+1}-1) E_n(f,\underline{\nu}).
\]
of the Eisenstein series $E_n(f,\underline{\nu})$ at the point $-\underline{\lambda}_n$. The irreducible automorphic representation $\Delta(\tau,n)$ generated by $\mathrm{Res}_n E_n(f,\underline{\nu})$ is usually referred to as the \emph{Speh representation}, which constitutes the whole non-cuspidal discrete spectrum if we let $\tau$ run through all possible choices of cuspidal automorphic representations for all block sizes $a$.\\

The composition $\mathrm{Res}_{n} E_n(\cdot,\underline{\nu})$ of the Eisenstein series operator and the residue operator is a quotient operator from the principal series $\Ind_{P_n}^G\left(\bigotimes_{k=1}^n \tau|\cdot|^{\nu_i}\right)$ to $\mathcal{A}_{G_n}$. It factors through the embedding $\Delta(\tau,n)\hookrightarrow \mathcal{A}_{G_n}$. By the Langlands classification (\cite{langlands1989irreducible} and \cite{langlands2006functional}), on the pole $\underline{\nu}$ where a residue exists, the representation $\Delta(\tau,n)$ is also the image of the intertwining operator
\[
    M(w_0, \underline{\nu}):  I(\tau, \underline{\nu})\longrightarrow I(\tau, w_0\underline{\nu})
\]
for the longest element $w_0$ of the Weyl group $\mathbb{S}_n$. On each finite place, the local component of the residual representation $\Delta(\tau,n)$ is isomorphic to the Langlands quotient of the principal series $I(\tau_v,\underline{\nu})$.

\subsection{Induction from Speh Representations}
As in Theorem \ref{thma}, for $m\leq n$ and a irreducible cuspidal unitary automorphic representation $\tau$, we construct the induced representation
\[
    I(\tau,\underline{s}):=\mathrm{Ind}_{P_{[m,n]}}^{G_{m+n}}\left(\Delta(\tau,m)|\cdot|^{s_1}\boxtimes\Delta(\tau,n)|\cdot|^{s_2}\right)
\]
from two Speh representations $\Delta(\tau,m)$ and $\Delta(\tau,n)$ for the groups $G_m$ and $G_n$, respectively. We would like to develop a combinatorial method to understand the poles of the Eisenstein series
\[
    E^{m,n}(\cdot,\underline{s}): I(\tau,\underline{s})\longrightarrow \mathcal{A}_{G_n}
\]
on a section $f$ of $I(\tau,\underline{s})$. Since the Speh representation $\Delta(\tau,m)|\cdot|^{s_1}\boxtimes \Delta(\tau,n)|\cdot|^{s_2}$ can be realized as a subrepresentation of $I(m,\underline{\lambda_m})|\cdot|^{s_1}\boxtimes I(n,\underline{\lambda_n})|\cdot|^{s_2}$, any vector of $I(\tau,\underline{s})$ can be realized as a vector in the principal series
\[
    I(m,n,\underline{s}) = \mathrm{Ind}_{P_{[m,n]}}^{G_{m+n}}\left(I(m,\underline{\lambda_m})|\cdot|^{s_1}\boxtimes I(n,\underline{\lambda_n})|\cdot|^{s_2}\right).
\]
For any such vector $f$, the constant term along the unipotent radical $U_{m+n}$ can be expressed as the following sum of formal integrals over Weyl group cosets:
\begin{align*}
    (c_{U_{m+n}}E^{m,n}(f,\underline{s}))(g) &= \sum_{\gamma\in P_{[m,n]}(k)\backslash G_{m+n}(k)}\int_{U_{m+n}(k)\backslash U_{m+n}(\mathbb{A})} f(\gamma n g) dg
\end{align*}
By the Bruhat decompsition, $\gamma$ can be represented by product $w\begin{spmatrix}1&\beta\\0&1\end{spmatrix}$, and we can decompose $n$ into the product $n = n_1n_2$, such that $n_1\in U_{[m,n]}$ and $n_2\in U_m\times U_n$. Writting $n_1 = \begin{spmatrix}1&a\\0&1\end{spmatrix}$ and $n_2 = \begin{spmatrix}u_1&0\\0&u_2\end{spmatrix}$, we have
\begin{align*}
    \gamma n &= w\begin{spmatrix}1&a + \beta\\0&1\end{spmatrix}\begin{spmatrix}u_1&0\\0&u_2\end{spmatrix}\\
    &=w\begin{spmatrix}u_1&0\\0&u_2\end{spmatrix}\begin{spmatrix}u_1^{-1}&0\\0&u_2^{-1}\end{spmatrix}\begin{spmatrix}1&a + \beta\\0&1\end{spmatrix}\begin{spmatrix}u_1&0\\0&u_2\end{spmatrix}\\
    &=w\begin{spmatrix}u_1&0\\0&u_2\end{spmatrix}\begin{spmatrix}1&u_1^{-1}(a + \beta)u_2\\0&1\end{spmatrix}.
\end{align*}
For any section $f\in I(m,n,\underline{s})$, since
\[
    f(\gamma ng) = f\left(w\begin{spmatrix}u_1&0\\0&u_2\end{spmatrix}w^{-1}w\begin{spmatrix}1&u_1^{-1}(a + \beta)u_2\\0&1\end{spmatrix}g\right)
\]
and $f$ is invariant under the left regular action of $U_{m+n}$, we have
\[
    f(\gamma ng) = f\left(w\begin{spmatrix}1&u_1^{-1}(a + \beta)u_2\\0&1\end{spmatrix}g\right).
\]
Therefore, for the section $f\in I(m,n,\underline{s})$, the constant term $c_{U_{m+n}}E^{m,n}(f,\underline{s})$ can simply be written as the sum
\[
    c_{U_{m+n}}E^{m,n}(f,\underline{s}) = \sum_{w\in \mathbb{S}_m\times \mathbb{S}_n\backslash \mathbb{S}_{m+n}}\int_{U_{[m,n]}(\mathbb{A})\cap w^{-1}U_{[m,n]}(\mathbb{A}) w\backslash U_{[m,n]}(\mathbb{A})} f(wng)dn = \sum_{w\in \mathbb{S}_m\times \mathbb{S}_n\backslash \mathbb{S}_{m+n}}M(w,\underline{s})f.
\]
We can summarize this result into the following lemma:
\begin{lemma}\label{lemb}
    For any section $f\in I(m,n,\underline{s})\hookrightarrow I(\tau,\underline{s})$, the Eisenstein series $E^{m,n}(f,\underline{s})$ has the same set of poles with the constant term
    \[
        c_{U_{m+n}}E_{\underline{s}}^{m,n}(f) = \sum_{w\in \mathbb{S}_{m}\times \mathbb{S}_{n}\backslash \mathbb{S}_{m+n}} M(w,\underline{s})f.
    \]
\end{lemma}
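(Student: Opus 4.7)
The bulk of the computation is actually carried out in the paragraphs preceding the lemma, so my plan is to assemble these ingredients into a clean proof and then add the step that actually justifies the conclusion about poles.

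First, I would begin by unfolding the Eisenstein series against the unipotent radical $U_{m+n}$, using the standard Langlands formula together with the Bruhat decomposition $G_{m+n}(k) = \bigsqcup_{w\in \mathbb{S}_m\times \mathbb{S}_n\backslash \mathbb{S}_{m+n}} P_{[m,n]}(k)\, w\, P_{[m,n]}(k)$. For each representative $w$, I would write an element of the inner unipotent integration as $n_1n_2$ with $n_1\in U_{[m,n]}$ and $n_2\in U_m\times U_n\subset M_{[m,n]}$, and then carry out exactly the matrix manipulation already displayed in the excerpt. The key observation here is that conjugating $n_2$ past $w$ produces a factor absorbed by the left $U_{m+n}$-invariance of $f$, leaving only an integral over $U_{[m,n]}(\mathbb{A})\cap w^{-1}U_{[m,n]}(\mathbb{A})w \backslash U_{[m,n]}(\mathbb{A})$, which is by definition $M(w,\underline{s})f$.

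Second, I would argue that the resulting sum $\sum_w M(w,\underline{s})f$ has the same set of poles as $E^{m,n}(f,\underline{s})$ itself. The easy direction is that any pole of $E^{m,n}(f,\underline{s})$ must appear in its constant term, since the constant-term map is continuous in $\underline{s}$ and an automorphic form with identically vanishing constant terms along every parabolic would have to be cuspidal, which a genuine Eisenstein series residue is not (the residues live inside the discrete noncuspidal spectrum described in the previous subsection). The reverse direction, that every pole of $\sum_w M(w,\underline{s})f$ produces a pole of $E^{m,n}(f,\underline{s})$, follows from the injectivity of the constant-term map on the relevant isotypic component, which is a standard consequence of the theory of Eisenstein series for $GL(N)$ (one can cite \cite{moeglin1995spectral}).

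The step I expect to be essentially trivial is the matrix manipulation, since it is already written out. The only real content is the coincidence of pole sets between $E^{m,n}(f,\underline{s})$ and its constant term, but even that is standard; in fact, since the individual summands $M(w,\underline{s})f$ have the same poles as the scalar normalization factors $r(w,\underline{s})$ (as already recalled in the previous subsection from \cite{moeglin1989spectre}), the lemma effectively reduces the analytic problem for $E^{m,n}(f,\underline{s})$ to a combinatorial problem about cancellations among the normalization factors $r(w,\underline{s})$ as $w$ runs over $\mathbb{S}_m\times\mathbb{S}_n\backslash \mathbb{S}_{m+n}$. This reformulation is precisely what sets up the combinatorial arguments needed for Theorem~\ref{thma}, so in writing the proof I would emphasize this consequence in the closing sentence.
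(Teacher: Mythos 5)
Your computation of the constant term (Bruhat decomposition, the decomposition $n = n_1 n_2$, conjugation past $w$ and absorbing $n_2$ by left $U_{m+n}$-invariance) is exactly the argument carried out in the paragraphs preceding the lemma, so that part of your proof matches the paper. The paper itself does not supply a justification for the assertion that $E^{m,n}(f,\underline{s})$ and $c_{U_{m+n}}E^{m,n}(f,\underline{s})$ have the same set of poles---it merely states it---so your addition is welcome. A small point of calibration: you label as the ``easy direction'' the implication that a pole of $E^{m,n}$ must show up in $c_{U_{m+n}}E^{m,n}$. In fact that is the deeper direction: it requires the M\oe glin--Waldspurger theory of the cuspidal support of Eisenstein series (essentially \cite[I.4.10]{moeglin1995spectral}), since holomorphy of one particular constant term must be upgraded to holomorphy of the whole automorphic form. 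The genuinely easy direction is the other one: $c_{U_{m+n}}E^{m,n}$ is obtained from $E^{m,n}$ by integration over a compact adelic quotient, so a pole of the constant term forces a pole of $E^{m,n}$---no injectivity argument is needed. Also, your appeal to ``identically vanishing constant terms along every parabolic'' should be sharpened: the hypothesis only gives vanishing along $U_{m+n}$, and one then invokes the fact that all cuspidal constant terms of $E^{m,n}$ lie along parabolics associate to $P_{[m,n]}$ and are built from the same intertwining operators, so that holomorphy of the $c_{U_{m+n}}$-constant term controls them all. With those adjustments the argument is correct and, as you note, the content of the lemma is precisely the reduction to the scalar normalization factors $r(w,\underline{s})$ that drives the rest of the paper.
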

We can further organize the sum in Lemma \ref{lemb} by grouping $M(w,\underline{s})f$ for different $w$'s based on which space $M(w,\underline{s})f$ lives in. Denoting 
\[
    \rho_{m,n}(\tau,\underline{s}) = \left(\bigotimes_{i=1}^m \tau|\cdot|^{\frac{1-m}{2}+i-1+s_1}\right)\otimes \left(\bigotimes_{j=1}^n \tau|\cdot|^{\frac{1-n}{2}+j-1+s_2}\right),
\]
the intertwining operator $M(w,\underline{s})f$ is a vector in the conjugation $(\rho_{m,n}(\tau,\underline{s}))^w$. We will denote by $\mathcal{O}_{\underline{s}}(w)$ the set of all Weyl group representatives $w'\in \mathbb{S}_m\times\mathbb{S}_n\backslash\mathbb{S}_{m+n}$ such that $(\rho_{m,n}(\tau,\underline{s}))^w\cong (\rho_{m,n}(\tau,\underline{s}))^{w'}$. Thus the sum in Lemma \ref{lemb} can be grouped into sums on orbits $\mathcal{O}_{\underline{s}}(w)$:
\begin{align}
    c_{U_{m+n}}E_{\underline{s}}^{m,n}(f) = \sum_{\mathcal{O}_{\underline{s}}(w)}\left(\sum_{w\in \mathcal{O}_{\underline{s}}(w)} M(w,\underline{s})f\right).
\end{align}

\section{Segments and Weyl Group Elements}
In this section, we classify the orbits of elements in $\mathbb{S}_m\times\mathbb{S}_n\backslash \mathbb{S}_{m+n}$ and describe the representatives of each orbit.
\subsection{A Double Coset}\label{doublecoset}
We define $W_{m,n}$ as the collection of representatives of the double coset 
\[
    (W_{m}\times W_{n})\backslash W_{m+n} /\{e\} = \mathbb{S}_m\times\mathbb{S}_n\backslash \mathbb{S}_{m+n},
\]
which has a one-to-one correspondence with the set of elements $w\in\mathbb{S}_{m+n}$ preserving the order in the strings $\{1,\ldots,m\}$ and $\{m+1,\ldots,m+n\}$, respectively. Such an element satisifies the property
\begin{align}
    w(1)<\ldots <w(m),\;w(m+1)< \ldots <w(m+n).\label{defprop}
\end{align}
The action of $w$ on the string $\{1,\ldots,m+n\}$ shuffles between the two strings $\{1,\ldots,m\}$ and $\{m+1,\ldots,m+n\}$. We denote the interlacing intervals originated from each one of these two strings by $v_i$ and $u_i$, respectively, as shown in the following diagram:
\begin{center}
    \begin{tikzpicture}[line width=10pt]
        \draw (0.5,0.5) node {$v_0$};
        \draw[opacity = 0.2] (0,0) --(1,0);
        \draw (1.5,0.5) node {$u_1$};
        \draw[opacity = 0.5] (1,0) --(2,0);
        \draw (2.5,0.5) node {$v_1$};
        \draw[opacity = 0.2] (2,0) --(3,0);
        \draw (3.5,0.5) node {$u_2$};
        \draw[opacity = 0.5] (3,0) --(4,0);
        \draw (5,0.5) node {$\ldots$};
        \draw[opacity = 0.1] (4,0) --(6,0);
        \draw (6.5,0.5) node {$u_i$};
        \draw[opacity = 0.5] (6,0) --(7,0);
        \draw (7.5,0.5) node {$v_i$};
        \draw[opacity = 0.2] (7,0) --(8,0);
        \draw (8.5,0.5) node {$\ldots$};
        \draw[opacity = 0.1] (8,0) --(9,0);
    \end{tikzpicture}.
\end{center}
The ``light gray'' $v_i$-intervals and the ``dark gray'' $u_i$-intervals assemble back to the two original strings:
\begin{center}
    \begin{tikzpicture}[line width=10pt]
        \draw (-2,0) node {$12\ldots (m-1)m = $};
        \draw (0.5,0.5) node {$v_0$};
        \draw[opacity = 0.2] (0,0) --(1,0);
        \draw (1.5,0.5) node {$v_1$};
        \draw[opacity = 0.2] (1,0) --(2,0);
        \draw (2.5,0.5) node {$v_2$};
        \draw[opacity = 0.2] (2,0) --(3,0);
        \draw (3.5,0.5) node {$v_3$};
        \draw[opacity = 0.2] (3,0) --(4,0);
        \draw (5,0.5) node {$\ldots$};
        \draw[opacity = 0.2] (4,0) --(6,0);
    \end{tikzpicture}
    \begin{tikzpicture}[line width=10pt]
        \draw (-2,0) node {$(m+1)\ldots (m+n) = $};
        \draw (0.5,0.5) node {$u_1$};
        \draw[opacity = 0.5] (0,0) --(1,0);
        \draw (1.5,0.5) node {$u_2$};
        \draw[opacity = 0.5] (1,0) --(2,0);
        \draw (2.5,0.5) node {$u_3$};
        \draw[opacity = 0.5] (2,0) --(3,0);
        \draw (3.5,0.5) node {$u_4$};
        \draw[opacity = 0.5] (3,0) --(4,0);
        \draw (5,0.5) node {$\ldots$};
        \draw[opacity = 0.5] (4,0) --(6,0);
    \end{tikzpicture}.
\end{center}
It is possible that $v_0$ is empty, in which case the sequence $w(1\ldots (m+n))$ will start with the interval $u_1$:
\begin{center}
    \begin{tikzpicture}[line width=10pt]
        \draw (1.5,0.5) node {$u_1$};
        \draw[opacity = 0.5] (1,0) --(2,0);
        \draw (2.5,0.5) node {$v_1$};
        \draw[opacity = 0.2] (2,0) --(3,0);
        \draw (3.5,0.5) node {$u_2$};
        \draw[opacity = 0.5] (3,0) --(4,0);
        \draw (5,0.5) node {$\ldots$};
        \draw[opacity = 0.1] (4,0) --(6,0);
        \draw (6.5,0.5) node {$u_i$};
        \draw[opacity = 0.5] (6,0) --(7,0);
        \draw (7.5,0.5) node {$v_i$};
        \draw[opacity = 0.2] (7,0) --(8,0);
        \draw (8.5,0.5) node {$\ldots$};
        \draw[opacity = 0.1] (8,0) --(9,0);
    \end{tikzpicture}.
\end{center}
For any interval $I$, we denote by $l(I)$ the index of its starting point on the left and $r(I)$ the index of its ending point on the right. In the string corresponding to $w$, if an interval $I_1$ appears to the left of $I_2$, then we say $I_1$ \emph{precedes} $I_2$, and we denote this situation by $I_1\prec I_2$.

\subsection{Segments}\label{segmentssec}
The full principal series induced from the tensor product $\rho_{m,n}(\tau,\underline{s})$ can be perceived as the automorphic analogue of the induction from \emph{Berstein-Zelevinsky segments} $\Delta + s_1$ and $\Delta'+s_2$ (c.f. \cite{zelevinsky1980induced}):
\begin{center}
    \begin{tikzpicture}
        \draw (0,0.5) node {$\frac{1-m}{2}$};
        \draw (1,0.5) node {$\frac{3-m}{2}$};
        \draw (2,0.5) node {$\frac{5-m}{2}$};
        \draw (3,0.5) node {$\ldots$};
        \draw (4,0.5) node {$\frac{1-m}{2}+\sigma$};
        \draw (5,0.5) node {$\ldots$};
        \draw (6,0.5) node {$\frac{m-5}{2}$};
        \draw (7,0.5) node {$\frac{m-3}{2}$};
        \draw (8,0.5) node {$\frac{m-1}{2}$};
        \draw[opacity = 0.5,|-|,line width=1.5pt] (0,0) --(1,0);
        \draw[opacity = 0.5,-|,line width=1.5pt] (1,0) --(2,0);
        \draw[opacity = 0.5,-|,line width=1.5pt] (2,0) --(4,0);
        \draw[opacity = 0.5,|-|] (4,0) --(5,0);
        \draw[opacity = 0.5,|-|] (5,0) --(6,0);
        \draw[opacity = 0.5,|-|] (6,0) --(7,0);
        \draw[opacity = 0.5,|-|] (7,0) --(8,0);
        \draw (9,0) node {$+s_1$};
        \draw (-3,-1.5) node {$\frac{1-n}{2}$};
        \draw (-2,-1.5) node {$\frac{3-n}{2}$};
        \draw (-1,-1.5) node {$\frac{5-n}{2}$};
        \draw (1,-1.5) node {$\ldots$};
        \draw (2,-1.5) node {$\frac{n-5}{2}$};
        \draw (3,-1.5) node {$\frac{n-3}{2}$};
        \draw (4,-1.5) node {$\frac{n-1}{2}$};
        \draw[opacity = 0.5,|-|] (-3,-1) --(-2,-1);
        \draw[opacity = 0.5,|-|] (-2,-1) --(-1,-1);
        \draw[opacity = 0.5,|-|] (-1,-1) --(0,-1);
        \draw[opacity = 0.5,|-,line width=1.5pt] (0,-1) --(1,-1);
        \draw[opacity = 0.5,|-,line width=1.5pt] (1,-1) --(2,-1);
        \draw[opacity = 0.5,|-,line width=1.5pt] (2,-1) --(3,-1);
        \draw[opacity = 0.5,|-|,line width=1.5pt] (3,-1) --(4,-1);
        \draw (5,-1) node {$+s_2$};
    \end{tikzpicture}
\end{center}
The integer $\sigma\geq 0$ equals to the length of the overlapping part of the two segments. The smallest possible $\sigma$ is equal to 0, in which case the two segments are \emph{juxtaposed}, as shown in the following diagram:
\begin{center}
    \begin{tikzpicture}
        \draw (0,0.5) node {$\frac{1-m}{2}$};
        \draw (1,0.5) node {$\frac{3-m}{2}$};
        \draw (2,0.5) node {$\frac{5-m}{2}$};
        \draw (3,0.5) node {$\ldots$};
        \draw (4,0.5) node {$\frac{m-5}{2}$};
        \draw (5,0.5) node {$\frac{m-3}{2}$};
        \draw (6,0.5) node {$\frac{m-1}{2}$};
        \draw[opacity = 0.5,|-|] (0,0) --(1,0);
        \draw[opacity = 0.5,-|] (1,0) --(2,0);
        \draw[opacity = 0.5,-|] (2,0) --(4,0);
        \draw[opacity = 0.5,|-|] (4,0) --(5,0);
        \draw[opacity = 0.5,|-|] (5,0) --(6,0);
        \draw (6.5,0) node {$+s_1$};
        \draw (-7,0.5) node {$\frac{1-n}{2}$};
        \draw (-6,0.5) node {$\frac{3-n}{2}$};
        \draw (-5,0.5) node {$\frac{5-n}{2}$};
        \draw (-4,0.5) node {$\ldots$};
        \draw (-3,0.5) node {$\frac{n-5}{2}$};
        \draw (-2,0.5) node {$\frac{n-3}{2}$};
        \draw (-1,0.5) node {$\frac{n-1}{2}$};
        \draw[opacity = 0.5,|-|] (-7,0) --(-6,0);
        \draw[opacity = 0.5,|-|] (-6,0) --(-5,0);
        \draw[opacity = 0.5,|-] (-5,0) --(-4,0);
        \draw[opacity = 0.5,|-] (-4,0) --(-3,0);
        \draw[opacity = 0.5,|-] (-3,0) --(-2,0);
        \draw[opacity = 0.5,|-|] (-2,0) --(-1,0);
        \draw (-0.5,0) node {$+s_2$};
    \end{tikzpicture}.
\end{center}
In general, if the length of the overlapping part of the two segments is $\sigma$,  we have:
\[
    \frac{n-1}{2}+s_2 = \frac{1-m}{2} + s_1 - 1 + \sigma,
\]
and thus $\sigma$ determines $s = s_1-s_2 = \frac{m+n}{2}-\sigma$, where $\sigma$ is allowed to be any integer between $0$ and $\lfloor\frac{m+n}{2}\rfloor$, so that the segments $\Delta+s_1$ and $\Delta'+s_2$ intersect in the following three ways:
\begin{itemize}
    \item Case I: $\sigma\leq \min\{m,n\}$:
    \begin{center}
        \begin{tikzpicture}
            \draw[opacity = 1,|-|] (0,0.2) --(5,0.2);
            \draw (6,0.2) node {$\Delta+s_1$};
            \draw[opacity = 1,|-|] (-3,-0.2) --(2,-0.2);
            \draw (3,-0.2) node {$\Delta'+s_2$};
        \end{tikzpicture}
    \end{center}
    \item Case II: $m\leq\sigma\leq n$:
    \begin{center}
        \begin{tikzpicture}
            \draw[opacity = 1,|-|] (0,0.2) --(5,0.2);
            \draw (6,0.2) node {$\Delta+s_1$};
            \draw[opacity = 1,|-|] (-2,-0.2) --(6,-0.2);
            \draw (7,-0.2) node {$\Delta'+s_2$};
        \end{tikzpicture}
    \end{center}
    \item Case III: $n \leq \sigma \leq m$:
    \begin{center}
        \begin{tikzpicture}
            \draw[opacity = 1,|-|] (0,0.2) --(5,0.2);
            \draw (6,0.2) node {$\Delta+s_1$};
            \draw[opacity = 1,|-|] (1,-0.2) --(2,-0.2);
            \draw (3,-0.2) node {$\Delta'+s_2$};
        \end{tikzpicture}
    \end{center}
\end{itemize}
The case when $\sigma > \max\{m,n\}$ is called \emph{$\Delta+s_1$ precedes $\Delta'+s_2$}, which will result in a negative $s=s_1-s_2$ and will not be considered.\\

Since the representation $\rho_{m,n}(\tau,\underline{s})$ depends on the choice of $\sigma$, the orbits $\mathcal{O}_{\underline{s}}(w)$ of Weyl group cosets $\mathbb{S}_m\times\mathbb{S}_n\backslash \mathbb{S}_{m+n}$ also depend on the choice of $\sigma$.

\subsection{Orbits}\label{orbits}
This section generalizes the method developed in \cite{hanzer2015images}. The condition for two Weyl group representatives $w,w'\in W_{m,n}$ belonging to the same orbit is $(\rho_{m,n}(\tau,\underline{s}))^w = (\rho_{m,n}(\tau,\underline{s}))^{w'}$, which is equivalent to requiring
\[
    w'(i) = w(i)\text{ or } w(m+n-\sigma+i).
\]
for $i\in \{1,\ldots,m\}$ and $m+n-\sigma+i\in \{m+1,\ldots,m+n\}$. Therefore, there is a transitive action by a group $G_{\sigma} = \ZZ_2^r$ (which is related to the $R$-group, and will be described in the following sections) of \emph{permissible moves} for some integer $r$ on each orbit $\mathcal{O}_{\underline{s}}$ permuting the pairs $\{w(i),w(m+n-\sigma+i)\}$. For any choice of $\sigma$, we can choose the \emph{base point} $w_0\in \mathcal{O}_{\underline{s}}$ to be the unique element in the orbit satisfying
\begin{equation}\label{req}
    w_0(i) < w_0(m+n-\sigma+i)
\end{equation}
for all $i$ such that $i\in \{1,\ldots,m\}$ and $m+n-\sigma+i\in \{m+1,\ldots,m+n\}$.

\subsection{Permissible Moves on Basepoints}\label{permissiblemoves}
In this section, we describe the aforementioned group $G_{\sigma}$ of \emph{permissible} moves.
\begin{center}
    \begin{tikzpicture}[line width=10pt]
        \draw (4.5,0.5) node {$v_i$};
        \draw[opacity = 0.2] (4,0) --(5,0);
        \draw (5.5,0.5) node {$u_{i+1}$};
        \draw[opacity = 0.5] (5,0) --(6,0);
        \draw (7.5,0.5) node {$\ldots$};
        \draw[opacity = 0.1] (6,0) --(9,0);
        \draw (2.5,0.5) node {$\ldots$};
        \draw[opacity = 0.1] (6,0) --(9,0);
        \draw[opacity = 0.3] (1,0) --(4,0);
    \end{tikzpicture}
\end{center}
\begin{definition}\label{permissible}
    Fixing an $s=s_1-s_2=\frac{m+n}{2}-\sigma$, a \emph{permissible move} $g\in G_{\sigma}$ is a permutation of the string $\{1,2,\ldots,(m+n)\}$, such that
    \begin{enumerate}
        \item For any element $w$ satisfying (\ref{defprop}), $gw$ also satisfies (\ref{defprop}).
        \item The two elements $w$ and $gw$ satisfy $\rho_{m,n}(\tau,\underline{s})^w = \rho_{m,n}(\tau,\underline{s})^{gw}$, i.e.
        \[
            gw(i) \in\{ w(i),w(m+n-\sigma+i)\}.
        \]
    \end{enumerate} 
    Any element that can be affected by a permissible move is said to be \emph{alive}, otherwise it is said to be \emph{dead}. If there is a continuous segment of living elements, we refer to that segment as a \emph{living segment}, and vice versa for \emph{dead segments}.
\end{definition}

For a basepoint $w_0$, we will color the \emph{living} elements of the string $w_0(12\ldots(m+n))$ either green or red based on whether they originate from the string $12\ldots m$ or $(m+1)\ldots (m+n)$, respectively. Recall that we require the basepoint element $w_0$ to satisfy the property $w_0(i) < w_0(m+n-\sigma+i)$, on the basepoint element, we will color $w_0(i)$ green and $w_0(m+n-\sigma+i)$ red. We will call them \emph{green elements (intervals,resp.)} and \emph{red elements (intervals,resp.)} for convenience. Note that any green element lies on a ``light gray'' $v_i$-interval while any red element lies on a ``dark gray'' $u_i$-interval.  We will use this diagram to prove properties of all permissible moves from a basepoint $w_0$.
\begin{lemma}\label{lemma3}
    For a basepoint $w_0$, the following properties are true for green elements:
    \begin{enumerate}
        \item All elements between two green elements on the same ``light gray'' $v_i$-interval are green.
        \begin{center}
            \begin{tikzpicture}[line width=10pt]
                \draw (4.5,0.5) node {$v_i$};
                \draw (4.4,-0.5) node {$I_i$};
                \draw[opacity = 0.2] (4,0) --(5,0);
                \draw[color = green] (4.2,0) --(4.7,0);
                \draw (5.5,0.5) node {$u_{i+1}$};
                \draw[opacity = 0.5] (5,0) --(6,0);
                \draw (7.5,0.5) node {$\ldots$};
                \draw[opacity = 0.1] (6,0) --(9,0);
                \draw (2.5,0.5) node {$\ldots$};
                \draw[opacity = 0.1] (6,0) --(9,0);
                \draw[opacity = 0.3] (1,0) --(4,0);
            \end{tikzpicture}
        \end{center}
        \item The right endpoint of $v_i$ and the right endpoint of the corresponding $I_i$ are aligned: $w(r(v_i)) = w(r(I_i))$.
        \begin{center}
            \begin{tikzpicture}[line width=10pt]
                \draw (4.5,0.5) node {$v_i$};
                \draw (4.75,-0.5) node {$I_i$};
                \draw[opacity = 0.2] (4,0) --(5,0);
                \draw[color = green] (4.5,0) --(5,0);
                \draw (5.5,0.5) node {$u_{i+1}$};
                \draw[opacity = 0.5] (5,0) --(6,0);
                \draw (7.5,0.5) node {$\ldots$};
                \draw[opacity = 0.1] (6,0) --(9,0);
                \draw (2.5,0.5) node {$\ldots$};
                \draw[opacity = 0.1] (6,0) --(9,0);
                \draw[opacity = 0.3] (1,0) --(4,0);
            \end{tikzpicture}
        \end{center}
    \end{enumerate}
    Similarly, for red elements, they form a gap-free interval on any $u_i$, and their left endpoints are aligned with the left endpoint of $u_i$ they lie in:
    \begin{center}
        \begin{tikzpicture}[line width=10pt]
            \draw (4.5,0.5) node {$v_i$};
            \draw (5.25,-0.5) node {$J_i$};
            \draw[opacity = 0.2] (4,0) --(5,0);
            \draw (5.5,0.5) node {$u_{i+1}$};
            \draw[opacity = 0.5] (5,0) --(6,0);
            \draw (7.5,0.5) node {$\ldots$};
            \draw[opacity = 0.1] (6,0) --(9,0);
            \draw (2.5,0.5) node {$\ldots$};
            \draw[opacity = 0.1] (6,0) --(9,0);
            \draw[opacity = 0.3] (1,0) --(4,0);
            \draw[color = red] (5,0) --(5.5,0);
        \end{tikzpicture}.
    \end{center}
\end{lemma}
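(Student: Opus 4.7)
The plan is to split the lemma into a bookkeeping part giving contiguity and a basepoint-uniqueness argument giving the alignment. First I would make the living labels explicit: in Case I (where $\sigma \leq \min(m,n)$), the constraint ``$i \in \{1,\ldots,m\}$ and $m+n-\sigma+i \in \{m+1,\ldots,m+n\}$'' forces the living first-string labels to form the consecutive block $\{1,\ldots,\sigma\}$ with partners $\{m+n-\sigma+1,\ldots,m+n\}$ on the second string; Cases II and III yield analogous consecutive blocks. Combined with the order condition $w_0(1) < \cdots < w_0(m)$, this means each $v_i$-interval corresponds to a consecutive block of first-string labels arranged in increasing label-order from left to right inside $v_i$, so the greens on $v_i$ are precisely the labels of this block lying in $\{1,\ldots,\sigma\}$. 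Being the intersection of two consecutive integer intervals, the greens form a consecutive sub-interval of $v_i$, which gives claim (1); the same reasoning on $u_i$ gives the contiguity of reds.

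For the alignment claim (2), I would argue by contradiction using the uniqueness of the basepoint. Suppose $r(v_i) \neq r(I_i)$; then the rightmost element of $v_i$ is a dead first-string label $\ell > \sigma$, with some green $k \leq \sigma$ sitting strictly to its left inside $v_i$. The goal is to produce another element $w_0' \in \mathcal{O}_{\underline{s}}(w_0)$ still satisfying the defining inequality $w_0'(j) < w_0'(m+n-\sigma+j)$ for every living $j$, contradicting uniqueness. The candidate $w_0'$ is obtained by applying an appropriate $G_\sigma$-move, namely a transposition of one or more living pairs that rearranges them across the boundary between $v_i$ and the following $u_{i+1}$, using the dead label $\ell$ at $r(v_i)$ as a ``buffer'' that guarantees the post-move sequence still satisfies the two monotonicity conditions defining $W_{m,n}$. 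The red alignment is handled by the symmetric argument on each $u_i$.

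The main obstacle I anticipate is exhibiting the precise permissible move in the contradiction for (2): one must verify both that it genuinely lies in $G_\sigma$ (keeping the resulting $w_0'$ in $W_{m,n}$ and in the same orbit) and that it preserves every basepoint inequality. This likely requires a short case split on whether $u_{i+1}$ is non-empty and on the color of its leftmost element (red or dead), with the buffering role of $\ell$ being exactly what closes the contradiction.
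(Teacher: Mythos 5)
Your proof of part (1) relies on the claim that in Case I the living first‑string labels are \emph{exactly} $\{1,\ldots,\sigma\}$. That is not true: the condition $i\le\sigma$ (equivalently, $m+n-\sigma+i\in\{m+1,\ldots,m+n\}$) is only \emph{necessary} for $i$ to be alive, not sufficient. The paper's own Proposition~\ref{firstgreen}(3) records precisely that elements of $\{1,\ldots,\sigma\}$ can still be dead, and the appendix example $m=n=2$, $\sigma=1$, $w=(3124)$ is a singleton orbit in which even the label $1$ is dead. Which subset of $\{1,\ldots,\sigma\}$ is living depends on $w_0$, and determining that subset is essentially the content of Lemma~\ref{lemma3} and Proposition~\ref{firstgreen}; so your argument for (1) is circular at exactly this point. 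The same difficulty infects the ``intersection of two intervals'' step, since you would need to know that the dead labels in $\{1,\ldots,\sigma\}$ avoid the middle of each $v_i$, which again is what must be proved.

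Your argument for part (2) takes a detour through uniqueness of the basepoint that does not close. If $k$ is green (living) and $\ell>k$ is a dead first‑string label sitting to the right of $k$ inside the same $v_i$, the permissible moves affecting $k$ send $k$ to the position $w_0(m+n-\sigma+k)$, which lies on some $u$‑interval to the right of $v_i$; after such a move the smaller label $k$ would occupy a position strictly greater than $w_0(\ell)$, violating $gw_0(k)<gw_0(\ell)$ in property~(\ref{defprop}). In other words, the presence of the dead $\ell$ to the right \emph{blocks} every move of $k$, so $k$ could not have been living. That is the paper's one‑line argument, and it gives both (1) and (2) at once. By contrast, your proposed contradiction requires exhibiting a nonidentity $g\in G_\sigma$ for which $gw_0$ is again a basepoint; but every permissible move changes the pairing orientation $w_0(i)<w_0(m+n-\sigma+i)$ on whichever pair it swaps, so $gw_0$ is the basepoint only if $g=e$, and the move you are hoping to ``buffer'' with $\ell$ does not exist. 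You flag this as ``the main obstacle,'' and indeed it is: the proof as written does not supply the needed move, and the correct argument goes in the opposite direction (show the configuration forces $k$ to be dead, rather than show it forces another basepoint).
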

\begin{proof}
    These properties are all consequences of (1) in Definition \ref{permissible}. On any $v$-interval, the green elements are not allowed to move beyond any dead interval to its right:
    \begin{center}
        \begin{tikzpicture}[line width=10pt]
            \draw (4.5,0.5) node {$v_i$};
            \draw[opacity = 0.2] (4,0) --(5,0);
            \draw[color = green] (4.2,0) --(4.3,0);
            \draw[color = green] (4.6,0) --(4.7,0);
            \draw [->, very thin] (4.3,0) arc (180:270:0.4);
            \draw (4.6,-0.4) node {$\times$};
            \draw (5.5,0.5) node {$u_{i+1}$};
            \draw[opacity = 0.5] (5,0) --(6,0);
            \draw (7.5,0.5) node {$\ldots$};
            \draw[opacity = 0.1] (6,0) --(9,0);
            \draw (2.5,0.5) node {$\ldots$};
            \draw[opacity = 0.1] (6,0) --(9,0);
            \draw[opacity = 0.3] (1,0) --(4,0);
        \end{tikzpicture}.
    \end{center}
    Therefore, on each $v$-interval, no gap is allowed on any of its green subintervals. For the same reason, there are no dead elements to its right. Therefore, the right end of $v_i$ and the right end of $I_i$ are aligned.
\end{proof}
\begin{corollary}
    On the same $v_i$, the green elements form a gap-free interval, and on each ``light gray'' interval $v_i$ there is only one interval of green elements. We denote this interval by $I_i$. Similarly, on the same $u_i$, the red elements form a unique gap-free interval, and we denote this interval by $J_i$.
\end{corollary}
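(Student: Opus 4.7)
The plan is to derive this corollary as an essentially immediate consequence of Lemma \ref{lemma3}. First I would observe that property (1) of Lemma \ref{lemma3} says exactly that the set of green elements on a fixed $v_i$-interval is convex as a subset of the linearly ordered set of positions on $v_i$: any position lying between two green positions on the same $v_i$ is itself green. Since a convex subset of a finite linearly ordered set is an interval, the green positions on $v_i$ form an interval, which we label $I_i$.

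Next I would establish uniqueness by contradiction. Suppose there were two distinct maximal intervals of green elements on the same $v_i$. Then between them there would have to lie at least one non-green (dead or red) element. But the outermost green elements of these two intervals would then be two green elements on $v_i$ with a non-green element between them, directly contradicting property (1) of Lemma \ref{lemma3}. Hence there is at most one interval $I_i$ of green elements on each $v_i$, and it may be empty when $v_i$ contains no living elements at all.

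The analogous statement for red elements on $u_i$ is handled by the same two-step argument applied to the symmetric part of Lemma \ref{lemma3}: gap-freeness gives convexity, which in a linear order forces an interval structure $J_i$, and the uniqueness of this interval follows by the same contradiction argument. I do not anticipate any genuine obstacle here; the only subtlety worth flagging is the convention that $I_i$ (respectively $J_i$) is taken to be empty if $v_i$ (respectively $u_i$) contains no living elements of the appropriate color, so that the statement ``only one interval'' is to be read as ``at most one'' to cover this degenerate case.
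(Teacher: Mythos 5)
Your proposal is correct and matches the paper's intent: the corollary is stated immediately after Lemma~\ref{lemma3} without a written proof precisely because it follows from property (1) of that lemma by the convexity/interval argument you spell out, and the uniqueness-by-contradiction step is the obvious formalization of that. The only thing you add beyond what the paper leaves implicit is the helpful remark that $I_i$ and $J_i$ should be read as possibly empty.
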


The following lemma describes the permissible moves near the dead elements:

\begin{lemma}
    On the string representing the basepoint $w_0$, the following statements are true:
    \begin{enumerate}
        \item If the element with index $w(r(I_i))+1$ lying to the right of a green interval $I_i$ is dead, the permissible moves can only move $I_i$ into the adjacent red interval in $u_{i+1}$, with their right endpoints aligned: 
        \begin{center}
            \begin{tikzpicture}[line width=10pt]
                \draw (4.5,0.5) node {$v_i$};
                \draw (4.75,-0.5) node {$I_i$};
                \draw (5.25,-0.5) node {$J_i$};
                \draw (7.65,-0.5) node {$I_{i+1}$};
                \draw (8.35,-0.5) node {$J_{i+1}$};
                \draw[opacity = 0.2] (4,0) --(5,0);
                \draw (5.5,0.5) node {$u_{i+1}$};
                \draw[opacity = 0.5] (5,0) --(6,0);
                \draw[opacity = 0.3] (6,0) --(7,0);
                \draw (7.5,0.5) node {$v_{i+1}$};
                \draw[opacity = 0.2] (7,0) --(8,0);
                \draw (8.5,0.5) node {$u_{i+2}$};
                \draw[opacity = 0.5] (8,0) --(9,0);
                \draw (9.5,0.5) node {$\ldots$};
                \draw[opacity = 0.1] (9,0) --(13,0);
                \draw (2.5,0.5) node {$\ldots$};
                \draw[opacity = 0.3] (1,0) --(4,0);
                \draw[color = green] (4.5,0) --(5,0);
                \draw[color = red] (5,0) --(5.7,0);
                \draw[color = green] (7.3,0) --(8,0);
                \draw[color = red] (8,0) --(8.5,0);
                \draw (6.5,0) node {$\ldots$};
                \draw [->, very thin] (4.75,0) arc (180:360:0.375);
                \draw [->, very thin] (8.25,0) arc (360:180:0.375);
            \end{tikzpicture}.
        \end{center}
        Similarly, if the element with index $w(l(J_{i+1})) - 1$ to the left of $J_{i+1}\subset u_{i+2}$ is dead, the permissible moves can only move $J_{i+1}$ leftwards to its adjacent green interval $I_{i+1}$ in $v_{i+1}$, with their left endpoints aligned.
        \item If the intervals $I_i \subset v_i$ and $I_{i+1}\subset v_{i+1}$ are connected (i.e. without any gap in between), then the whole $u_{i+1}$ is a red interval $J_i$.
    \end{enumerate}
\end{lemma}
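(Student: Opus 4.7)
The plan is to exploit the monotonicity constraints of (\ref{defprop}) --- green values $1,\ldots,m$ forming one increasing subsequence and red values $m+1,\ldots,m+n$ forming another --- to restrict which pair swaps are permissible. First I interpret the hypothesis of (1): ``the element with index $w(r(I_i))+1$'' names the green value immediately succeeding the largest element of $I_i$, and its being dead forces that largest element to be the last alive green $G_\sigma$. Writing $I_i=\{G_p,G_{p+1},\ldots,G_\sigma\}$, let $S\subseteq\{1,\ldots,\sigma\}$ denote the set of pair indices swapped by a given permissible move.

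The key step is to show $S\cap\{p,\ldots,\sigma\}$ is either empty or all of $\{p,\ldots,\sigma\}$. For upward closure: if $j\in S$ and $j+1\notin S$ with both in $\{p,\ldots,\sigma\}$, then $G_j$ lands at the (original) position of $R_{j+n-\sigma}$, which lies past $v_i$, while $G_{j+1}$ remains in $v_i$; this places the green of value $j+1$ at an earlier position than the green of value $j$, violating $w(1)<\cdots<w(m)$. Downward closure follows symmetrically from the red monotonicity $w(m+1)<\cdots<w(m+n)$: if $j+1\in S$ and $j\notin S$, then $R_{j+1+n-\sigma}$ enters $v_i$ while $R_{j+n-\sigma}$ stays beyond it, placing a smaller red value at a later position. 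Hence any permissible move either leaves $I_i$ untouched or swaps every pair of $I_i$.

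In the latter case, $G_p,\ldots,G_\sigma$ migrate to the consecutive positions of $R_{p+n-\sigma},\ldots,R_n$. These landing positions must lie in a single $u$-interval: otherwise some intervening $v_{i'}$ with $i'>i$ would contain dead greens (values $>\sigma$) slotted between alive-green landings (values $\leq\sigma$), again breaking green monotonicity. So the landing $u$-interval must be $u_{i+1}$; and since $R_n$ is the rightmost alive red globally, its position coincides with the right endpoint of $J_i$ by Lemma~\ref{lemma3}, giving the alignment claim. The symmetric half of (1), about $J_{i+1}\subset u_{i+2}$ moving leftward into $I_{i+1}$, follows by interchanging the roles of greens/reds and left/right.

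For (2), the ``connected'' hypothesis means no dead element lies between $I_i$ and $I_{i+1}$, i.e., nowhere in $u_{i+1}$. Combining the red analog of Lemma~\ref{lemma3} (alive reds left-aligned on $u_{i+1}$) with the global ordering that dead red values precede alive red values in position, $u_{i+1}$ must be either fully alive or fully dead; connectedness forbids the latter, yielding $u_{i+1}=J_i$. The principal technical obstacle is the single-$u$-interval confinement step in (1), which reduces to the value inequality that dead green values exceed $\sigma$ and so cannot interlace with the alive-green landings in a monotone sequence.
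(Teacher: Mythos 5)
Your approach is the same in spirit as the paper's very terse proof — both rely on the monotonicity constraint $w(1)<\cdots<w(m)$, $w(m+1)<\cdots<w(m+n)$ encoded in Definition~\ref{permissible}(1), together with Lemma~\ref{lemma3} — but you explicate the argument in much greater detail, and the all-or-nothing dichotomy for the swap set $S$ (via upward and downward closure) is a correct and welcome elaboration of what the paper leaves implicit.

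There is, however, one genuine gap. You interpret the hypothesis as forcing the right endpoint of $I_i$ to be the globally last alive green $G_\sigma$, writing $I_i=\{G_p,\ldots,G_\sigma\}$, and then identify the landing position's right end with $R_n$, ``the rightmost alive red globally.'' But the inference from ``the next green label is dead'' to ``$r(I_i)$ equals the last alive green'' requires the \emph{contiguity} of living green labels, which the paper only establishes \emph{after} this lemma (as a corollary of Lemma~\ref{lemmamove}, which itself cites this lemma). Using it here is circular, and without it the dead green need not be the last one: a priori one could have $G_{q+1}$ dead but $G_{q+2}$ alive. Fortunately the argument can be repaired without contiguity: if $r(I_i)=q$, the dead green $q+1$ sits at the left end of $v_{i+1}$ (it is the smallest green label beyond $v_i$, so by Lemma~\ref{lemma3}'s left-packing of dead greens it occupies the leftmost $v_{i+1}$ slot), and any permissible move that sends some green of $I_i$ past $v_{i+1}$ would place it to the right of the unmoved dead green $q+1$, violating green monotonicity; hence the paired reds $R_{m+n-\sigma+p},\ldots,R_{m+n-\sigma+q}$ all lie in $u_{i+1}$, and the rightmost of these, $R_{m+n-\sigma+q}$ — not $R_n$ — is the right endpoint of $J_i$, since any larger red label in $u_{i+1}$ would pair with a green beyond $v_i$ sitting to its right, contradicting the basepoint inequality $w_0(i)<w_0(m+n-\sigma+i)$. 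A smaller remark on Part~2: the intermediate claim that $u_{i+1}$ ``must be either fully alive or fully dead'' is not quite right — Lemma~\ref{lemma3} only gives left-alignment, so a partially alive $u_{i+1}$ is perfectly possible in general — but your conclusion that connectedness forces $u_{i+1}=J_i$ is correct, so this is a misstatement rather than a flaw in the logic.
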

\begin{proof}
    The first part is a consequence of (1) in Definition \ref{permissible} and Lemma \ref{lemma3} since there is no green living interval adjacent to $I_i$ on the right. The second part is a consequence of the first part and (2) in Definition \ref{permissible}.
\end{proof}
\begin{remark}
    There is no specification about how on any basepoint element $w_0$, the elements are moved between continuous green/red intervals. One green interval $I_i$ can be splitted across two $u$-intervals, and one red interval $J_i$ can be splitted across two $v$-intervals by a permissible move. The only fact we know for sure is that, the left endpoint of a continuous interval satisfies the same property as $I_{i+1}$ and $J_{i+1}$ in the Part (2) of the lemma above, and the right endpoint of a continuous interval satisfies the same property as $I_{i}$ and $J_{i}$ in the Part (1) of the lemma above.
\end{remark}

The following lemma describes the gaps between two continuous intervals. In fact, this lemma implies that there is actually no gap between two consecutive living intervals.

\begin{lemma}\label{lemmamove}
    \begin{enumerate}
        \item For the following configuration on four consecutive intervals $v_i, u_{i+1}, v_{i+1}, u_{i+2}$:
        \begin{center}
            \begin{tikzpicture}[line width=10pt]
                \draw (4.5,0.5) node {$v_i$};
                \draw (4.75,-0.5) node {$I_i$};
                \draw (5.25,-0.5) node {$J_i$};
                \draw (6.65,-0.5) node {$I_{i+1}$};
                \draw (7.35,-0.5) node {$J_{i+1}$};
                \draw[opacity = 0.2] (4,0) --(5,0);
                \draw (5.5,0.5) node {$u_{i+1}$};
                \draw[opacity = 0.5] (5,0) --(6,0);
                \draw (6.5,0.5) node {$v_{i+1}$};
                \draw[opacity = 0.2] (6,0) --(7,0);
                \draw (7.5,0.5) node {$u_{i+2}$};
                \draw[opacity = 0.5] (7,0) --(8,0);
                \draw (9.5,0.5) node {$\ldots$};
                \draw[opacity = 0.1] (8,0) --(12,0);
                \draw (2.5,0.5) node {$\ldots$};
                \draw[opacity = 0.3] (1,0) --(4,0);
                \draw[color = green] (4.5,0) --(5,0);
                \draw[color = red] (5,0) --(5.5,0);
                \draw[color = green] (6.5,0) --(7,0);
                \draw[color = red] (7,0) --(7.5,0);
                \draw (5.75,0) node {$J_i'$};
                \draw (6.25,0) node {$I_{i+1}'$};
            \end{tikzpicture}
        \end{center}
        Setting $J_{i}' = u_{i+1}\backslash J_i$ and $I_{i+1}' = v_{i+1}\backslash I_{i+1}$, swapping $I'_{i+1}$ and $J_i'$ is a permissible move.
        \item For the following configuration,
        \begin{center}
            \begin{tikzpicture}[line width=10pt]
                \draw (4.5,0.5) node {$v_i$};
                \draw (4.75,-0.5) node {$I_i$};
                \draw (5.25,-0.5) node {$J_i$};
                \draw (7.65,-0.5) node {$I_{i+k}$};
                \draw (8.35,-0.5) node {$J_{i+k}$};
                \draw[opacity = 0.2] (4,0) --(5,0);
                \draw (5.5,0.5) node {$u_{i+1}$};
                \draw[opacity = 0.5] (5,0) --(6,0);
                \draw (7.5,0.5) node {$v_{i+k}$};
                \draw[opacity = 0.2] (7,0) --(8,0);
                \draw (8.5,0.5) node {$u_{i+k+1}$};
                \draw[opacity = 0.5] (8,0) --(9,0);
                \draw (10.5,0.5) node {$\ldots$};
                \draw[opacity = 0.1] (9,0) --(13,0);
                \draw (2.5,0.5) node {$\ldots$};
                \draw[opacity = 0.3] (1,0) --(4,0);
                \draw[color = green] (4.5,0) --(5,0);
                \draw[color = red] (5,0) --(5.5,0);
                \draw[color = green] (7.5,0) --(8,0);
                \draw[color = red] (8,0) --(8.5,0);
                \draw (5.75,0) node {$J_i'$};
                \draw (7.25,0) node {$I_{i+k}'$};
                \draw[opacity = 0.1] (6,0) --(7,0);
                \draw (6.5,0) node {$\ldots$};
            \end{tikzpicture}
        \end{center}
        one can construct a permissible move for the whole interval between $r(I_{i})+1$ and $l(I_{i+k})-1$.
    \end{enumerate}
\end{lemma}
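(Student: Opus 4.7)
For Part (1) I would first apply Lemma \ref{lemma3} to locate the relevant blocks precisely: $I_i$ sits flush-right in $v_i$, $J_i$ flush-left in $u_{i+1}$, $I_{i+1}$ flush-right in $v_{i+1}$, and $J_{i+1}$ flush-left in $u_{i+2}$. Consequently $J_i'$ occupies the rightmost portion of $u_{i+1}$ and $I_{i+1}'$ occupies the leftmost portion of $v_{i+1}$, so the two dead blocks sit contiguously in the output sequence. The candidate permissible move $g$ is then a product of pair transpositions $(p,\,m+n-\sigma+p)$ whose net action on $w_0$ rearranges the values in this combined block so that $J_i'$ and $I_{i+1}'$ are exchanged. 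The alignment from Lemma \ref{lemma3} is what allows the map $p \mapsto p + (m+n-\sigma)$ to send positions of $J_i'$ bijectively onto positions of $I_{i+1}'$, giving simultaneously the equality $|J_i'| = |I_{i+1}'|$ and an explicit identification of which pairs are being swapped.

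Condition (2) of Definition \ref{permissible} follows automatically, since $g$ is a product of pair transpositions, so at every position $p$ the new value is either $w_0(p)$ or $w_0(m+n-\sigma+p)$. The substantive verification is condition (1), namely that $gw_0 \in W_{m,n}$. Inside each of the two blocks the values swapped in come from already monotone subsequences of $w_0$, so monotonicity is preserved internally. The delicate points are the two transition boundaries, at $r(I_i)$ on the first string and at $l(J_{i+1})$ on the second: there one combines the basepoint inequalities $w_0(p) < w_0(m+n-\sigma+p)$ for alive pairs with the alignment $r(I_i) = r(v_i)$ and $l(J_i) = l(u_{i+1})$ given by Lemma \ref{lemma3} to confirm that the monotonicity of each string survives across the newly rearranged block.

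For Part (2) the natural strategy is induction on $k$, with Part (1) as the base case. In the inductive step I would peel off the innermost pair of intervals $v_{i+k-1}, u_{i+k}$ by applying Part (1) locally, thereby shrinking the gap by one interval, and then invoke the inductive hypothesis on the shorter configuration. Composing the two permissible moves gives a permissible move whose effect is confined to the positions in $[r(I_i)+1,\,l(I_{i+k})-1]$, because pair transpositions with disjoint pair indices commute without interference. The main obstacle throughout is the boundary verification in condition (1): a block swap modifies a contiguous range of positions at once, so monotonicity has to be re-examined at each $v/u$ transition inside the affected range rather than at just one or two neighboring positions, and the alignment statements of Lemma \ref{lemma3} are precisely the input that makes this step-by-step bookkeeping go through.
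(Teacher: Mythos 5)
Your Part (1) is in the same spirit as the paper's: the paper simply asserts that $|J_i'| = |I_{i+1}'|$ and concludes the swap is permissible, while you attempt to supply the justification by using Lemma \ref{lemma3} to locate the blocks and the pairing $p\mapsto p+(m+n-\sigma)$ to identify them. That added detail is welcome, though the phrase ``sends positions of $J_i'$ bijectively onto positions of $I_{i+1}'$'' is slightly off: the pairing acts on the domain indices $\{1,\dots,\sigma\}$, not directly on the output positions, so what you really need is that the set of indices $p$ whose image $w_0(p)$ lands in $I_{i+1}'$ coincides with the set of indices $p$ with $w_0(m+n-\sigma+p)$ in $J_i'$; once that is said, the equality of lengths and the validity of conditions (1) and (2) of Definition \ref{permissible} follow as you describe.

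For Part (2), however, your inductive strategy has a genuine gap. You propose to ``peel off the innermost pair of intervals $v_{i+k-1},u_{i+k}$ by applying Part (1) locally,'' but Part (1) is stated only for a gap bracketed on both sides by living intervals $I_i,J_i$ and $I_{i+1},J_{i+1}$; in the intermediate stages of your induction the sub-intervals $v_{i+k-1}$ and $u_{i+k}$ are entirely dead, so the hypotheses of Part (1) are not met and there is no base-case move to compose with. The paper avoids this by a single direct construction: list the dark-gray sub-intervals of the gap as lengths $(p_1,\dots,p_k)$ and the light-gray ones as $(q_1,\dots,q_k)$, observe from the endpoint correspondence that $\sum p_i = \sum q_i$, take the common refinement of these two compositions, and swap the matching refined pieces. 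That refinement argument handles an arbitrary gap in one step and shows directly that the resulting block permutation preserves both monotone strings, which is precisely condition (1) of Definition \ref{permissible}. If you want to salvage an induction, the induction should be on the refinement itself (matching prefixes of equal total length), not an application of Part (1) to the interior dead intervals.
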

\begin{proof}
    \begin{enumerate}
        \item Since $J_i'$ has the same length as $I_{i+1}'$, swapping $J_i'$ with $I_{i+1}'$ is in fact a permissible move. 
        \item We can list the intervals between $J_i$ and $I_{i+k}$ as
        \[
            J_i', v_{i+1}, u_{i+2},\ldots, u_{i+k},I'_{i+k}.
        \]
        Since the right endpoints of $I_i$ and $J_i$ correspond, and the left endpoints of $I_{i+k}$ and $J_{i+k}$ correspond, denoting the lengths of the ``dark grey'' intervals $J_i',u_{i+2},\ldots,u_{i+k}$ by $p_1,\ldots,p_k$, and the lengths of ``light grey'' intervals $v_{i+1},\ldots,I_{i+k}'$ by $q_1,\ldots,q_k$. We have $\sum_{i=1}^k p_i = \sum_{i=1}^k q_i$. From the partitions $(p_1,\ldots,p_k)$ and $(q_1,\ldots,q_k)$, we can obtain a refined partition of the dark gray and light gray intervals. The move between corresponding intervals is a permissible move.
    \end{enumerate}
\end{proof}
From the previous lemmas, we can find that on the segment $12\ldots m$, the green elements only exist along $(\max\{\sigma-n+1,1\},\ldots,\min\{\sigma,m\})$, and can be organized into mutually connected intervals $K_1,\ldots,K_r$, possibly straddling across different $v$-intervals. Each $K_t$ moves as a whole under the action of $G_{\sigma}$. The order of $G_{\sigma}$ is equal to $2^r$.

\subsection{General Description of the Permissible Moves}
Recall from Section \ref{doublecoset} that the string \[w(12\ldots(m+n))\] can be expressed as the interlacing $v,u$ intervals:
\begin{center}
    \begin{tikzpicture}[line width=10pt]
        \draw (0.5,0.5) node {$v_0$};
        \draw[opacity = 0.2] (0,0) --(1,0);
        \draw (1.5,0.5) node {$u_1$};
        \draw[opacity = 0.5] (1,0) --(2,0);
        \draw (2.5,0.5) node {$v_1$};
        \draw[opacity = 0.2] (2,0) --(3,0);
        \draw (3.5,0.5) node {$u_2$};
        \draw[opacity = 0.5] (3,0) --(4,0);
        \draw (5,0.5) node {$\ldots$};
        \draw[opacity = 0.1] (4,0) --(6,0);
        \draw (6.5,0.5) node {$u_i$};
        \draw[opacity = 0.5] (6,0) --(7,0);
        \draw (7.5,0.5) node {$v_i$};
        \draw[opacity = 0.2] (7,0) --(8,0);
        \draw (8.5,0.5) node {$\ldots$};
        \draw[opacity = 0.1] (8,0) --(9,0);
    \end{tikzpicture}.
\end{center}
We would like to use this diagram, combined with the lemmas we stated in the previous section to describe the green and red living intervals and the corresponding permissible moves.

\begin{proposition}\label{firstgreen}
    On the string representing a basepoint element $w_0$,
    \begin{enumerate}
        \item The first green element is the first green element on the left such that there is no dead element from $\{m+1,\ldots,m+n\}$ to its right. 
        \item The last red element is the last red element on the right such that there is no dead element from $\{1,\ldots,m\}$ to its left. 
        \item An element $i\in \{1,\ldots,\sigma\}$ is dead if between $w(i)$ and $w(m+n-\sigma+i)$ there is a dead element coming from $\{m+n-\sigma+1,\ldots,m+n\}$.
    \end{enumerate}
\end{proposition}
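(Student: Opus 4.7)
The plan is to prove Part~3 first, since the contradiction it produces is the combinatorial obstruction that fuels Parts~1 and~2. Write the alleged dead element as $k=m+n-\sigma+j$ for some $j\in\{1,\ldots,\sigma\}$, so $k=j^{*}$. The hypothesis $w_0(k)<w_0(m+n-\sigma+i)$ together with the second-string monotonicity $w_0(m+1)<\cdots<w_0(m+n)$ forces $j^{*}<i^{*}$ and hence $j<i$. Because every permissible move treats each swap pair $(j',(j')^{*})$ as a unit, $k$ is dead if and only if $j$ is dead, so no permissible $S\subseteq\{1,\ldots,\sigma\}$ can contain $j$. If $i$ were alive, realized by some permissible $S\ni i$ with $j\notin S$, then after executing the move, the value $i^{*}$ would sit at position $w_0(i)$ while $k=j^{*}$ would remain at $w_0(k)$; for the second-string positions of $gw_0$ to be increasing the pair $j^{*}<i^{*}$ demands $w_0(k)<w_0(i)$, contradicting the hypothesis $w_0(i)<w_0(k)$. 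Hence $i$ is dead.

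For Part~1, I would characterize the leftmost alive first-string index $i_1$ as the leftmost element of $\{1,\ldots,m\}$ satisfying the stated property. The forward direction uses Part~3 directly: any dead element of the swap range $\{m+n-\sigma+1,\ldots,m+n\}$ lying in the critical window $(w_0(i_1),w_0(i_1^{*}))$ would, by Part~3, force $i_1$ to be dead; and the left boundary permissibility constraint from an $S$ realizing the aliveness of $i_1$ forces the non-swap-range positions $w_0(m+1),\ldots,w_0(m+n-\sigma)$ to lie to the left of $w_0(i_1)$ (inductively, using that every $i<i_1$ is dead, one applies the appropriate boundary constraint). The converse direction starts from the leftmost first-string index satisfying the property and constructs an explicit permissible move using the block-swap construction of Lemma~\ref{lemmamove}: the absence of dead obstructions to the right is exactly what allows the adjacent-pair and boundary constraints from Section~\ref{permissiblemoves} to be simultaneously satisfied. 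Part~2 then follows from Part~1 by the order-reversing involution $i\leftrightarrow\sigma+1-i$, which interchanges the roles of the two strings and reverses the left--right reading direction.

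The main obstacle I anticipate is the converse direction of Part~1, namely translating the absence of dead second-string elements to the right into an actual permissible subset $S\ni i_1$. This requires recognizing the leftmost connected block $K_1$ of alive indices and coherently applying Lemma~\ref{lemmamove} across possibly several adjacent $v_k$- and $u_k$-intervals in order to move the entire block. Part~3 is the clean obstruction; the delicacy lies in the combinatorial bookkeeping around interval alignment that makes the constructive side of Part~1 go through, where one must check simultaneously both the boundary constraint and each adjacent-pair constraint along the length of $K_1$.
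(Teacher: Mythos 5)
Your argument is correct and tracks the same underlying idea as the paper, which disposes of Parts (1)--(2) by pointing at the proof of Lemma~\ref{lemma3} and of Part (3) with the one-line assertion that permissible moves cannot cross dead elements; what you add is the explicit monotonicity obstruction: a move swapping $i$ but not $j$ places $i^{*}$ at position $w_0(i)$ while $j^{*}=k$ stays at $w_0(k)$, and since $j^{*}<i^{*}$ but $w_0(i)<w_0(k)$ the second string is no longer increasing, so no such permissible move exists. This is exactly the mechanism the paper invokes implicitly, so the route is the same; you have merely (and usefully) written down the contradiction. Two small remarks. First, your Part~(3) argument does not actually need $k$ to belong to the swap range $\{m+n-\sigma+1,\ldots,m+n\}$: any dead $k\in\{m+1,\ldots,m+n\}$ sitting strictly between $w_0(i)$ and $w_0(i^{*})$ produces the same contradiction, which is what one needs anyway to rule out the non-swap-range elements $\{m+1,\ldots,m+n-\sigma\}$ in your forward direction of Part~(1). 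Second, the involution that transports Part~(1) to Part~(2) is best stated not as an internal map $i\leftrightarrow\sigma+1-i$ on $\{1,\ldots,\sigma\}$ but as the reverse-and-relabel bijection $W_{m,n}\to W_{n,m}$, $w'(p)=m+n+1-w(m+n+1-p)$, under which basepoints map to basepoints and the induced action on swap-pair indices is your $i\mapsto\sigma+1-i$; with that phrasing the symmetry argument is airtight. The part you flag as delicate, the converse direction of Part~(1), is indeed where the real combinatorial content lives and is handled in the paper through the structure built up in Lemmas~\ref{lemma3} and \ref{lemmamove}, so your plan of reducing it to the block-swap construction is the intended one.
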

\begin{proof}
    Part (1) and (2) follow from the definition of living elements and the proof of Lemma \ref{lemma3}. Part (3) is true simply because permissible moves do not allow any move across dead elements.
\end{proof}
    
In the proof of Lemma \ref{lemmamove}, we have already seen that there is no dead element between the first and the last living elements. Denoting the first living green interval by $v_k'\subset v_k$, and the last living red interval by $u_l'\subset u_l$. The positions of these living elements are displayed as in the following diagram:
\begin{center}
    \begin{tikzpicture}[line width=10pt]
        \draw (0.5,0.5) node {$v_k'$};
        \draw[color = green] (0,0) --(1.2,0);
        \draw (1.5,0.5) node {$u_k$};
        \draw[color = red] (1.2,0) --(1.9,0);
        \draw (2.5,0.5) node {$v_{k+1}$};
        \draw[color = green] (1.9,0) --(3,0);
        \draw (3.7,0.5) node {$u_{k+1}$};
        \draw[color = red] (3,0) --(4.6,0);
        \draw[color = green, opacity = 0.2] (4.6,0) --(6,0);
        \draw (5.2,0.5) node {$\ldots$};
        \draw (6.5,0.5) node {$u_i$};
        \draw[color = red] (6,0) --(7,0);
        \draw (7.5,0.5) node {$v_i$};
        \draw[color = green] (7,0) --(8,0);
        \draw (8.5,0.5) node {$\ldots$};
        \draw[color = red, opacity = 0.2] (8,0) --(9,0);
        \draw (9.5,0.5) node {$u_l'$};
        \draw[color = red] (9,0) --(10,0);
    \end{tikzpicture}
\end{center}
The permissible moves can be constructed with the following algorithm:
\begin{enumerate}
    \item Denoting the lengths of the intervals $v_k',\ldots, v_l$ and $u_k,\ldots,u_l'$ by $p_k,\ldots,p_l$ and $q_k,\ldots,q_l$, for any $1 \leq r \leq \sum_{i=k}^l p_i = \sum_{i=k}^l q_i$, the collection of $S$ of ``end points of refined intervals'' $\{t_1,\ldots,t_N\}$ is defined as all the $r$'s such that $r$ corresponds to an end point of either a green or a red interval.
    \begin{center}
        \begin{tikzpicture}[line width=10pt]
            \draw (0.5,0.5) node {$v_k'$};
            \draw[color = green] (0,0) --(1.2,0);
            \draw[color = black, line width=1pt] (0.7,0.2) --(0.7,-0.2);
            \draw (1.5,0.5) node {$u_k$};
            \draw[color = red] (1.2,0) --(1.9,0);
            \draw (2.5,0.5) node {$v_{k+1}$};
            \draw[color = green] (1.9,0) --(3,0);
            \draw (3.7,0.5) node {$u_{k+1}$};
            \draw[color = red] (3,0) --(4.6,0);
            \draw[color = black, line width=1pt] (3.45,0.2) --(3.45,-0.2);
            \draw[color = green, opacity = 0.2] (4.6,0) --(6,0);
            \draw (5.2,0.5) node {$\ldots$};
            \draw (6.5,0.5) node {$u_i$};
            \draw[color = red] (6,0) --(7,0);
            \draw (7.5,0.5) node {$v_i$};
            \draw[color = green] (7,0) --(8,0);
            \draw (8.5,0.5) node {$\ldots$};
            \draw[color = red, opacity = 0.2] (8,0) --(9,0);
            \draw (9.5,0.5) node {$u_l$};
            \draw[color = red] (9,0) --(10,0);
        \end{tikzpicture}.
    \end{center}
    \item Between these end points, we can obtain a refinement of the living green and red intervals. We label each interval of each color from left to right with an index:
    \begin{center}
        \begin{tikzpicture}[line width=10pt]
            \draw (0.5,0.5) node {$v_k'$};
            \draw[color = green] (0,0) --(1.2,0);
            \draw[color = black, line width=1pt] (0.7,0.2) --(0.7,-0.2);
            \draw (1.5,0.5) node {$u_k$};
            \draw[color = red] (1.2,0) --(1.9,0);
            \draw (2.5,0.5) node {$v_{k+1}$};
            \draw[color = green] (1.9,0) --(3,0);
            \draw (3.7,0.5) node {$u_{k+1}$};
            \draw[color = red] (3,0) --(4.6,0);
            \draw[color = black, line width=1pt] (3.45,0.2) --(3.45,-0.2);
            \draw[color = green, opacity = 0.2] (4.6,0) --(6,0);
            \draw (5.2,0.5) node {$\ldots$};
            \draw (6.5,0.5) node {$u_i$};
            \draw[color = red] (6,0) --(7,0);
            \draw (7.5,0.5) node {$v_i$};
            \draw[color = green] (7,0) --(8,0);
            \draw (8.5,0.5) node {$\ldots$};
            \draw[color = red, opacity = 0.2] (8,0) --(9,0);
            \draw (9.5,0.5) node {$u_l$};
            \draw[color = red] (9,0) --(10,0);
            \draw (0.35,0) node {1};
            \draw (0.95,0) node {2};
            \draw (1.55,0) node {1};
            \draw (2.45,0) node {3};
            \draw (3.225,0) node {2};
            \draw (4.025,0) node {3};
            \draw (9.5,0) node {$N$};
        \end{tikzpicture}
    \end{center}
    Following the convention of step 2, the labels of each color are $\{1,\ldots,N\}$. Each labeled interval with index $i$ is denoted by $\delta_i$.
    \item Now we can group the labeled intervals by collecting all the labels $k$ on the red intervals which satisfy the condition
    \[
        \max\{\text{labels of the green intervals to the left of }k\} = k.
    \]
    We denote the set by $Q$ and the labels by ${k_1,\ldots,k_M}$. 
\end{enumerate}
Thus we have proven the following proposition:
\begin{proposition}\label{segmentation}
    On an basepoint elemnt $w_0$, we can segment the green intervals into
    \begin{align*}
        K_1 &= (\Xi_1,\ldots,\Xi_{k_1})\\
        K_2 &= (\Xi_{k_1+1},\ldots,\Xi_{k_2})\\
        \ldots\\
        K_M &= (\Xi_{k_{M-1}+1},\ldots,\Xi_{k_M}).
    \end{align*}
    The any permissible move moves $K_i$ as a whole to the corresponding red interval $L_i$. Similar operations can be done for red intervals, and we can define $L_1,\ldots,L_M$ as the red intervals corresponding to each $K_i$.
\end{proposition}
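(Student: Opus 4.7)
The plan is to verify that the three-step algorithm preceding the proposition correctly identifies the atomic pieces of the $G_\sigma$-action, by checking (a) that the partition $K_1,\ldots,K_M$ is well-defined and exhausts the refined labels, (b) that swapping each $K_i$ with its paired red block $L_i$ is a permissible move, and (c) that no finer decomposition is possible.

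For (a), I would first observe that the total length of the living green intervals equals the total length of the living red intervals, because both count indices $i$ for which the pair $\{w_0(i), w_0(m+n-\sigma+i)\}$ consists of living elements, by Proposition \ref{firstgreen}(3). Consequently, at the final label $N$ the cumulative green and red lengths agree, so $N \in Q$, and $Q$ partitions the refined label set $\{1,\ldots,N\}$ into the consecutive index blocks $(k_{t-1}, k_t]$. The blocks $K_t$ are then the unions of refined green intervals with these labels, and $L_t$ the unions of refined red intervals with matching labels.

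For (b), I would apply Lemma \ref{lemmamove}(2) iteratively within each block $(k_{t-1}, k_t]$. The defining condition of $Q$ — that the maximum label among green refined intervals to the left of the red refined interval labeled $k_t$ equals $k_t$ — is exactly the balance statement that the cumulative green length equals the cumulative red length at both endpoints of the block, which is the hypothesis of Lemma \ref{lemmamove}(2). Hence each swap $K_t \leftrightarrow L_t$ satisfies the order-preservation property (\ref{defprop}). The $\rho_{m,n}(\tau,\underline{s})$-equivalence in Definition \ref{permissible}(2) holds by construction, since swapping a length-$\ell$ refined green segment with the length-$\ell$ refined red segment of the same label pairs each living index $i \in \{1,\ldots,m\}$ with its counterpart $m+n-\sigma+i$, which carry the same central character twist in view of $s_1-s_2 = \frac{m+n}{2}-\sigma$.

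For indivisibility in (c), suppose a permissible move acted on a non-empty strict subset of the refined intervals inside a single block $(k_{t-1}, k_t]$. Let $j$ be the largest index in that subset with $j<k_t$; then $j \notin Q$ by definition of $Q$, so the cumulative green length through label $j$ differs from the cumulative red length through label $j$. A partial swap of only those intervals would therefore shift some living element from $\{1,\ldots,m\}$ past an unshifted living element from the same string (or symmetrically on the red side), violating the monotonicity requirement in (\ref{defprop}). This contradiction forces each $K_t$ to move as an indivisible whole, which immediately gives the claimed pairing with $L_t$ and yields $|G_\sigma| = 2^M$.

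The main obstacle will be step (c): one must track carefully which positions of the string $w_0(1\ldots (m+n))$ are shifted by how much under a hypothetical partial swap, and verify that the green/red length imbalance at any non-$Q$ label necessarily forces an order violation rather than merely a $\rho$-inequivalence. The delicate point is that the labels in $Q$ are exactly the balance points, so every other candidate cut inside a $K_t$ must produce a measurable misalignment — this is what ties the combinatorial balance condition to the Weyl-group constraint.
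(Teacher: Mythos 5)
Your overall plan matches the route implicit in the paper: verify the three-step algorithm preceding the proposition via Lemma~\ref{lemmamove}, the equal count of living green and red elements, and the order constraint (\ref{defprop}). The paper simply declares ``thus we have proven'' after presenting the algorithm, so your explicit treatment of parts (a) and especially the indivisibility claim (c) is a welcome addition, and (c) is indeed the delicate point to get right.

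However, the characterization of $Q$ that you use in both (b) and (c) is wrong as stated, and the argument does not close without fixing it. You claim that $j\notin Q$ means ``the cumulative green length through label $j$ differs from the cumulative red length through label $j$.'' These two cumulative lengths never differ: Step~1 of the algorithm constructs the common refinement so that the green piece labeled $r$ and the red piece labeled $r$ each have length $t_r-t_{r-1}$ for every $r$, hence both cumulative sums equal $t_j$ for every $j$, whether or not $j\in Q$. What actually separates balance points is positional, not length-based. Because green elements lie in $v$-intervals and red in $u$-intervals, which alternate and are disjoint, green piece $j$ always lies entirely to the left of red piece $j$; the condition $j\in Q$ is exactly that \emph{green piece $j+1$ lies entirely to the right of red piece $j$}, i.e.\ the $t_j$-th red living position precedes the $(t_j+1)$-th green living position in the string $w_0(1\ldots(m+n))$. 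This is precisely the inequality that makes a cut at label $j$ permissible: swapping all pairs with label $\le j$ and none with label $>j$ preserves (\ref{defprop})-monotonicity iff the value now occupying red position $t_j$ still precedes the unmoved value at green position $t_j+1$ (and symmetrically on the other string). When $j\notin Q$ this positional inequality fails, and you obtain exactly the order violation you are after. Once ``cumulative length differs'' is replaced by this position comparison, the rest of (b) and (c) goes through and coincides with the combinatorics encoded in Lemma~\ref{lemmamove}(2) and the paper's algorithm.
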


\subsection{Dead Intervals}
In this and the following sections, we will call the dead intervals to the left of the living intervals the \emph{head}, and those dead intervals to the right of the living intervals the \emph{tail}. In this section, we describe the head and tail intervals in all the three cases described in Section \ref{segmentssec}.
\subsubsection{Head intervals for Cases I and II}
In the cases I and II, we have $n\geq \sigma$, thus:
\begin{proposition}\label{prophead}
    The \emph{head} can be constructed through the following procedure
    \begin{center}
        \begin{tikzpicture}[line width=10pt]
            \draw (1.5,0.5) node {$n-\sigma$};
            \draw (0.9,-0.5) node {$R_1$};
            \draw[color = black, line width=1pt] (-0.5,0.2) --(-0.5,-0.2);
            \draw[opacity = 0.5] (0,0) --(0.1,0);
            \draw[opacity = 0.2] (-0.5,0) --(0,0);
            \draw[opacity = 0.2] (0.1,0) --(0.3,0);
            \draw[opacity = 0.5] (0.3,0) --(1.2,0);
            \draw[opacity = 0.2] (1.2,0) --(1.6,0);
            \draw[opacity = 0.5] (1.6,0) --(2.1,0);
            \draw[opacity = 0.2] (2.1,0) --(2.3,0);
            \draw[opacity = 0.5] (2.3,0) --(3,0);
            \draw[color = black, line width=1pt] (3,0.2) --(3,-0.2);
            \draw (5.1,0.5) node {$R_1$};
            \draw (4.65,-0.5) node {$R_2$};
            \draw[opacity = 0.2] (3,0) --(3.2,0);
            \draw[opacity = 0.5] (3.2,0) --(3.3,0);
            \draw[opacity = 0.2] (3.3,0) --(3.4,0);
            \draw[opacity = 0.5] (3.4,0) --(4.2,0);
            \draw[opacity = 0.2] (4.2,0) --(4.6,0);
            \draw[opacity = 0.5] (4.6,0) --(6.1,0);
            \draw[opacity = 0.2] (6.1,0) --(6.3,0);
            \draw[opacity = 0.5] (6.3,0) --(7,0);
            \draw[color = black, line width=1pt] (7,0.2) --(7,-0.2);
            \draw (7.5,0.5) node {$R_2$};
            \draw (7.3,-0.5) node {$R_3$};
            \draw[opacity = 0.2] (7,0) --(7.1,0);
            \draw[opacity = 0.5] (7.1,0) --(7.5,0);
            \draw[opacity = 0.2] (7.5,0) --(7.6,0);
            \draw[opacity = 0.5] (7.6,0) --(8,0);
            \draw[color = black, line width=1pt] (8,0.2) --(8,-0.2);
            \draw (8.1,0.5) node {$R_3$};
            \draw[opacity = 0.5] (8,0) --(8.2,0);
            \draw[color = black, line width=1pt] (8.2,0.2) --(8.2,-0.2);
            \draw [decorate, line width = 1pt,decoration = {brace,mirror}] (-0.5,-0.2) --  (2.3,-0.2);
            \draw [decorate, line width = 1pt,decoration = {brace,mirror}] (3,-0.2) --  (6.3,-0.2);
            \draw [decorate, line width = 1pt,decoration = {brace,mirror}] (7,-0.2) --  (7.6,-0.2);
            \draw [decorate, line width = 1pt,decoration = {brace,mirror}] (8,-0.2) --  (8.2,-0.2);
            \draw (8.4,-0.5) node {$R_4=\emptyset$};
            \draw [decorate, line width = 1pt,decoration = {brace}] (0,0.2) --  (3,0.2);
            \draw [decorate, line width = 1pt,decoration = {brace}] (3.2,0.2) --  (7,0.2);
            \draw [decorate, line width = 1pt,decoration = {brace}] (7.1,0.2) --  (8,0.2);
            \draw [decorate, line width = 1pt,decoration = {brace}] (8,0.2) --  (8.2,0.2);
            \draw (8.5,0) node {$\ldots$};
        \end{tikzpicture}
    \end{center}
    Denote
    \begin{itemize}
        \item the set of elements $i$ in $\{1,\ldots,m\}$ such that $w(i)\prec w(m+n-\sigma)$ by $R_1$,
        \item the set of elements $i$ in $\{1,\ldots,m\}$ such that $w(|R_1|) \prec w(i)\prec w(m+n-\sigma+|R_1|)$ by $R_2$,
        \item the set of elements $i$ in $\{1,\ldots,m\}$ such that $w(|R_1|+|R_2|) \prec w(i)\prec w(m+n-\sigma+|R_1|+|R_2|)$ by $R_3$,
        \item $\ldots$
        \item the set of elements $i$ in $\{1,\ldots,m\}$ such that $w(\sum_{j=1}^{k-1}|R_j|) \prec w(i)\prec w(m+n-\sigma+\sum_{j=1}^{k-1}|R_j|)$ by $R_{k}$,
        \item The head ends where $R_{k+1} = \emptyset$. 
    \end{itemize}
    Then the set of all dead ``light gray'' elements in the head is the union of all $R_j$'s. 
\end{proposition}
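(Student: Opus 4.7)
The plan is to prove both inclusions of the claimed set equality by induction on the band index $k$, leveraging two principles: (i) dead elements come in pairs (if $i\in\{1,\ldots,\sigma\}$ is dead, so is its partner $m+n-\sigma+i$, and vice versa, since any permissible move swapping the pair affects both), and (ii) the sufficient condition from Proposition \ref{firstgreen}(3), broadened very slightly: whenever a dead second-string element lies strictly between $w(i)$ and $w(m+n-\sigma+i)$, the first-string element $i$ must be dead, because moving $i$ past the fixed dead element would violate the ordering required by (\ref{defprop}).

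For the forward inclusion, the base case $k=1$ uses that $m+n-\sigma$ is an unpaired second-string element (since $n>\sigma$ in Cases I and II), hence dead. For any $i\in R_1$, the chain $w(i)<w(m+n-\sigma)<w(m+n-\sigma+i)$ (the right inequality from monotonicity of $w$ on the second string) places this dead element in the critical window, forcing $i$ to be dead. The inductive step repeats the argument with the reference point shifted: assuming $R_1,\ldots,R_{k-1}$ correctly identify the dead prefix $\{1,\ldots,\Sigma_k\}$ where $\Sigma_k=|R_1|+\cdots+|R_{k-1}|$, pair-symmetry gives that the second-string element $m+n-\sigma+\Sigma_k$ is dead, and the same chain argument with $\Sigma_k$ in place of $0$ kills every $i\in R_k$.

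For the reverse inclusion, monotonicity of $w$ on the first string forces each $R_k$ to be a block of consecutive indices $\{\Sigma_k+1,\ldots,\Sigma_{k+1}\}$, so $\bigcup_k R_k$ is a prefix $\{1,\ldots,\Sigma\}$ of the first string. To close the argument one must show the algorithm does not stop prematurely: when the recursion terminates with $R_{k+1}=\emptyset$, the element $\Sigma+1$ (if it exists and has a partner) must be alive. This should follow by combining the termination condition $w(\Sigma+1)\geq w(m+n-\sigma+\Sigma)$ with the structure of Proposition \ref{segmentation}: the previously identified dead elements fill the leftmost $2\Sigma+n-\sigma$ positions of the shuffle, after which $\Sigma+1$ sits adjacent to its partner $m+n-\sigma+\Sigma+1$ with no dead element between them, admitting a permissible move of the form described in that proposition.

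The main technical hurdle is the reverse inclusion, and specifically producing the explicit $K_1$--$L_1$ swap that witnesses the aliveness of $\Sigma+1$. One has to verify that the residual configuration after excising the $R_1,\ldots,R_k$-bands is compatible with the basepoint condition and admits a block structure exactly as classified in Proposition \ref{segmentation}, and to handle cleanly the boundary cases where $\Sigma+1$ has no partner (i.e., $\Sigma+1>\sigma$, so $\Sigma+1$ is structurally dead and belongs to the tail rather than the head). The forward inclusion, by contrast, reduces transparently to the deadness criterion once the induction is set up.
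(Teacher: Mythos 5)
Your proposal matches the paper's approach: propagate deadness from the unpaired second-string elements $\{m+1,\ldots,m+n-\sigma\}$ via the criterion of Proposition \ref{firstgreen}, inductively exhausting $R_1, R_2,\ldots$, and then certify that once $R_{k+1}=\emptyset$ the next first-string index is alive by exhibiting a permissible move. The only real difference is expository --- you make the forward/reverse decomposition and the induction explicit, and you are candid that the reverse step (constructing the witnessing permissible move for $\Sigma+1$) is where the substance lies, whereas the paper leaves that step as a bare assertion that such a move exists.
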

\begin{proof}
    This theorem is a consequence of Proposition \ref{firstgreen}. There cannot be any dead element on the right of a living element. Therefore, all $i$'s such that $w(i)\prec w(m+n-\sigma)$ are dead, and so are all the elements $m+n-\sigma+i$ such that $i\in R_1$. As a consequence, all elements $i$ such that $w(i)\prec w(m+n-\sigma+|R_1|)$ are also dead. We can continue with this process, until any of the following three situations occur:
    \begin{enumerate}
        \item All ``light gray'' elements from $\{1,\ldots,m\}$ are finished.
        \item All ``dark gray'' elements from $\{m+1,\ldots,n\}$ are finished.
        \item $R_{k+1} = \emptyset$ for some $k$.
    \end{enumerate}
    If $(3)$ happens before $(1)$ or $(2)$ could happen, one will be able to construct a permissible move on the living intervals to the right of the element $\sum_{j=1}^{k}|R_j|+1$.
\end{proof}
\begin{lemma}\label{lemmahead}
If for any $1\leq j\leq k$ such that $R_j\neq \emptyset$, we have
\begin{itemize}
    \item $i \leq w(i) < n-\sigma+i$ for $i\in R_1$,
    \item $n-\sigma+i\leq w(i) < n-\sigma + |R_1|+i$ for $i\in R_2$,
    \item \ldots
    \item $n-\sigma + \sum_{j=1}^{k-2}|R_j|+i\leq w(i) < n-\sigma + \sum_{j=1}^{k-1}|R_j|+i$ for $i\in R_k$.
\end{itemize}
\end{lemma}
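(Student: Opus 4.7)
The plan is to prove the bounds by induction on $k$, establishing in parallel the key auxiliary identity
\[
    w(m+n-\sigma+R_{<k}) \;=\; n-\sigma + R_{<k} + R_{\leq k},
\]
where I write $R_{<k} := \sum_{j=1}^{k-1}|R_j|$ and $R_{\leq k} := \sum_{j=1}^{k}|R_j|$. Combined with the structural fact that each $R_k$ is the contiguous initial segment $\{R_{<k}+1,\ldots,R_{<k}+|R_k|\}$ of the light positions not yet absorbed into $R_1\cup\cdots\cup R_{k-1}$, this identity forces both of the claimed inequalities via the order-preservation property (\ref{defprop}).

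For the base case $k=1$, order-preservation on $\{1,\ldots,m\}$ makes $R_1$ an initial segment $\{1,\ldots,|R_1|\}$. A counting argument then shows that the output positions $\{1,\ldots,w(m+n-\sigma)\}$ are exhausted by the $|R_1|$ light destinations $\{w(i):i\in R_1\}$ together with the $n-\sigma$ unpaired dark destinations $\{w(m+1),\ldots,w(m+n-\sigma)\}$: by the definition of $R_1$ combined with order-preservation on each of the two strings, every other destination strictly exceeds $w(m+n-\sigma)$. Hence $w(m+n-\sigma)=|R_1|+n-\sigma$, and since $w(i)$ is then the $i$-th smallest element of a size-$|R_1|$ subset of $\{1,\ldots,|R_1|+n-\sigma-1\}$, we obtain $i\leq w(i)\leq i+n-\sigma-1$, matching the claim for $R_1$.

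For the inductive step, assuming the identity holds at level $k-1$, every light position outside $R_{<k}$ has image strictly exceeding $w(m+n-\sigma+R_{<k-1})$, so by order-preservation $R_k$ is contiguous and starts at $R_{<k}+1$. The same counting argument applied to $\{1,\ldots,w(m+n-\sigma+R_{<k})\}$ delivers the identity at level $k$. Writing $i = R_{<k}+j$ with $j\in\{1,\ldots,|R_k|\}$, order-preservation together with the identity at level $k-1$ gives the lower bound
\[
    w(i) \;\geq\; w(m+n-\sigma+R_{<k-1})+j \;=\; n-\sigma+R_{<k-1}+i,
\]
while the identity at level $k$ gives the upper bound
\[
    w(i) \;\leq\; w(m+n-\sigma+R_{<k})-(|R_k|-j+1) \;=\; n-\sigma+R_{<k}+i-1.
\]

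The main obstacle is the bookkeeping of index shifts: the lower bound of the $R_k$-window pulls from the identity at level $k-1$ while the upper bound pulls from level $k$, and matching these against $\sum_{j=1}^{k-2}|R_j|$ and $\sum_{j=1}^{k-1}|R_j|$ in the lemma's statement must be done carefully. The one substantive check inside the counting argument is that no paired-dark destination $w(j)$ with $j>m+n-\sigma+R_{<k}$ slips into $\{1,\ldots,w(m+n-\sigma+R_{<k})\}$; this is immediate from order-preservation on the dark string. Edge cases where some $R_j=\emptyset$ collapse trivially, since by Proposition \ref{prophead} the head then terminates and the inductive assertion becomes vacuous at that level.
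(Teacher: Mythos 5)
Your proof is correct and takes essentially the same route as the paper's one-line argument ("follows from the enumeration of intervals"): you make the enumeration explicit by showing $R_j$ is the contiguous block $\{R_{<j}+1,\dots,R_{\le j}\}$ and establishing the counting identity $w(m+n-\sigma+R_{<k}) = n-\sigma+R_{<k}+R_{\le k}$, from which both sides of each inequality (and the strict upper bound, coming from $|R_j|\neq 0$) fall out by order-preservation.
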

\begin{proof}
    The inequalities follows simply from the enumeration of intervals. The strict inequality is due to the nonemptyness of $R_j$.
\end{proof}
\subsubsection{Head intervals for Case III}
In Case III, we can construct the head following a similar process. In this case, we have $\sigma > n$, and the dead intervals can be obtained via the process described in the following proposition:
\begin{proposition}\label{prophead3}
    The \emph{head} can be constructed through the following procedure in Case III:
    \begin{center}
        \begin{tikzpicture}[line width=10pt]
            \draw (1.5,0.5) node {$\sigma-n$};
            \draw (0.9,-0.5) node {$R_1'$};
            \draw[color = black, line width=1pt] (-0.5,0.2) --(-0.5,-0.2);
            \draw[opacity = 0.5] (0,0) --(0.1,0);
            \draw[opacity = 0.2] (-0.5,0) --(0,0);
            \draw[opacity = 0.2] (0.1,0) --(0.3,0);
            \draw[opacity = 0.5] (0.3,0) --(1.2,0);
            \draw[opacity = 0.2] (1.2,0) --(1.6,0);
            \draw[opacity = 0.5] (1.6,0) --(2.1,0);
            \draw[opacity = 0.2] (2.1,0) --(2.3,0);
            \draw[opacity = 0.5] (2.3,0) --(3,0);
            \draw[color = black, line width=1pt] (3,0.2) --(3,-0.2);
            \draw (5.1,0.5) node {$R_1'$};
            \draw (4.65,-0.5) node {$R_2'$};
            \draw[opacity = 0.2] (3,0) --(3.2,0);
            \draw[opacity = 0.5] (3.2,0) --(3.3,0);
            \draw[opacity = 0.2] (3.3,0) --(3.4,0);
            \draw[opacity = 0.5] (3.4,0) --(4.2,0);
            \draw[opacity = 0.2] (4.2,0) --(4.6,0);
            \draw[opacity = 0.5] (4.6,0) --(6.1,0);
            \draw[opacity = 0.2] (6.1,0) --(6.3,0);
            \draw[opacity = 0.5] (6.3,0) --(7,0);
            \draw[color = black, line width=1pt] (7,0.2) --(7,-0.2);
            \draw (7.5,0.5) node {$R_2'$};
            \draw (7.3,-0.5) node {$R_3'$};
            \draw[opacity = 0.2] (7,0) --(7.1,0);
            \draw[opacity = 0.5] (7.1,0) --(7.5,0);
            \draw[opacity = 0.2] (7.5,0) --(7.6,0);
            \draw[opacity = 0.5] (7.6,0) --(8,0);
            \draw[color = black, line width=1pt] (8,0.2) --(8,-0.2);
            \draw (8.1,0.5) node {$R_3'$};
            \draw[opacity = 0.5] (8,0) --(8.2,0);
            \draw[color = black, line width=1pt] (8.2,0.2) --(8.2,-0.2);
            \draw [decorate, line width = 1pt,decoration = {brace,mirror}] (-0.5,-0.2) --  (2.3,-0.2);
            \draw [decorate, line width = 1pt,decoration = {brace,mirror}] (3,-0.2) --  (6.3,-0.2);
            \draw [decorate, line width = 1pt,decoration = {brace,mirror}] (7,-0.2) --  (7.6,-0.2);
            \draw [decorate, line width = 1pt,decoration = {brace,mirror}] (8,-0.2) --  (8.2,-0.2);
            \draw (8.4,-0.5) node {$R_4'=\emptyset$};
            \draw [decorate, line width = 1pt,decoration = {brace}] (0,0.2) --  (3,0.2);
            \draw [decorate, line width = 1pt,decoration = {brace}] (3.2,0.2) --  (7,0.2);
            \draw [decorate, line width = 1pt,decoration = {brace}] (7.1,0.2) --  (8,0.2);
            \draw [decorate, line width = 1pt,decoration = {brace}] (8,0.2) --  (8.2,0.2);
            \draw (8.5,0) node {$\ldots$};
        \end{tikzpicture}
    \end{center}
    Denote
    \begin{itemize}
        \item the set of elements $i$ in $\{1,\ldots,n\}$ such that $w(m+i)\prec w(\sigma-n)$ by $R_1'$,
        \item the set of elements $i$ in $\{1,\ldots,n\}$ such that $w(m + |R_1'|) \prec w(m+i)\prec w(\sigma-n+|R_1'|)$ by $R_2'$,
        \item the set of elements $i$ in $\{1,\ldots,n\}$ such that $w(m + |R_1'|+|R_2'|) \prec w(m+i)\prec w(\sigma-n+|R_1'|+|R_2'|)$ by $R_3'$,
        \item $\ldots$
        \item the set of elements $i$ in $\{1,\ldots,n\}$ such that $w(\sum_{j=1}^{k-1}|R_j'|) \prec w(m+i)\prec w(\sigma-n+\sum_{j=1}^{k-1}|R_j'|)$ by $R_{k}'$,
        \item The head ends where $R'_{k+1} = \emptyset$. 
    \end{itemize}
    Then the set of all dead ``dark gray'' elements in the head is the union of all $R'_j$'s. 
\end{proposition}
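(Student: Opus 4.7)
The plan is to mirror the proof of Proposition \ref{prophead} with the roles of the two strings $\{1,\ldots,m\}$ and $\{m+1,\ldots,m+n\}$ interchanged. The reason a symmetric argument should apply is that in Case III the overlap satisfies $\sigma > n$, so the basepoint constraint (\ref{req}), namely $w_0(i) < w_0(m+n-\sigma+i)$, is imposed only for indices $i$ with $m+n-\sigma+i \in \{m+1,\ldots,m+n\}$, i.e.\ for $i \in \{\sigma-n+1,\ldots,\sigma\}$. This has the effect of constraining the ``dark gray'' side at the left end of the string rather than the ``light gray'' side as happened in Cases I and II, which is exactly why the dead elements in the head are now indexed as $m+i$ with $i \in \{1,\ldots,n\}$.

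First I would establish the base of the induction. Since Proposition \ref{firstgreen}(3) forbids a living element from having a dead element sitting between it and its partner, and since the pairing is only defined for $i \in \{\sigma-n+1,\ldots,\sigma\}$, every element of $\{m+1,\ldots,m+n\}$ whose position in the string $w_0(1\ldots(m+n))$ precedes $w_0(\sigma-n)$ is automatically forced to be dead. Collecting these yields precisely $R_1'$, with $|R_1'| \leq \sigma-n$.

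Next I would run the inductive step, which is the direct analogue of the inductive step in Proposition \ref{prophead}. Suppose $R_1',\ldots,R_{k-1}'$ have already been shown to be dead. Their light-gray partners under the pairing $i \mapsto m+n-\sigma+i$ (restricted appropriately) lie to their right in the string and are killed for the same reason as in Proposition \ref{firstgreen}(3): a permissible move cannot transport them across a dead partner. Any further dark-gray element $w(m+i)$ satisfying
\[
    w\!\left(m+\sum_{j=1}^{k-1}|R_j'|\right) \prec w(m+i) \prec w\!\left(\sigma-n+\sum_{j=1}^{k-1}|R_j'|\right)
\]
then inherits a dead light-gray element to its right, so it too must be dead; this is precisely the definition of $R_k'$. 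The induction terminates once $R_{k+1}' = \emptyset$, at which point Lemma \ref{lemmamove} supplies an honest permissible move on the interval starting just after the last dead element, certifying that no further elements are forced to be dead.

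The main obstacle I anticipate is purely bookkeeping: the asymmetry between $m$ and $n$ and the shift by $\sigma-n$ (in place of $n-\sigma$) need to be tracked carefully so that the inequalities analogous to those of Lemma \ref{lemmahead} come out correctly, namely that for each nonempty $R_k'$ one has $\sigma - n + \sum_{j=1}^{k-2}|R_j'| + i \leq w(m+i) - m < \sigma-n+\sum_{j=1}^{k-1}|R_j'|+i$ for $i \in R_k'$. Once this index accounting is in place, no ideas beyond those already used for Proposition \ref{prophead} are required.
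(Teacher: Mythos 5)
Your proposal follows essentially the same route as the paper's (very terse) proof: mirror the inductive process of Proposition \ref{prophead} with the roles of the two strings exchanged, taking as the base of the induction the unpaired light-gray elements $\{1,\ldots,\sigma-n\}$ together with the dark-gray elements interleaved before $w_0(\sigma-n)$, propagating deadness via the (mirrored) version of Proposition \ref{firstgreen}(3), and terminating at $R'_{k+1}=\emptyset$ because a permissible move then exists on the remaining interval (Lemma \ref{lemmamove}). This is precisely what the paper's proof gestures at, and your explicit inductive bookkeeping is a faithful expansion of it. One small slip: the parenthetical claim $|R_1'|\leq\sigma-n$ is neither used nor true in general (for instance $m=5$, $n=3$, $\sigma=4$ admits a basepoint in which every pair is dead and $|R_1'|=2$ while $\sigma-n=1$); since it plays no role in the argument, the proof is otherwise fine.
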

\begin{proof}
    In Case III, since $\sigma>n$, all the elements between 1 and $\sigma-n$ are dead, as well as the elements from $\{m+1,\ldots,m+n\}$ interlacing the elements $1,\ldots,\sigma-n$. Therefore, we can imitate the process in Proposition \ref{prophead} to exhaust the dead elements in the head for Case III. The process finishes when one of the following three situations occur:
    \begin{enumerate}
        \item All ``light gray'' elements from $\{1,\ldots,m\}$ are finished.
        \item All ``dark gray'' elements from $\{m+1,\ldots,n\}$ are finished.
        \item $R_{k+1}' = \emptyset$ for some $k$.
    \end{enumerate}
    Similar to Proposition \ref{prophead}, if (3) occurs before (1) or (2), we can construct a permissible move involving  the element $\sum_{j=1}^k |R_j'|+1$.
\end{proof}
The lengths of the $R_j'$ intervals satisfy the following lemma:
\begin{lemma}\label{lemmahead2}
    If for any $1\leq j\leq k$, $R'_i\neq \emptyset$, we have
    \begin{itemize}
        \item $i \leq w(i) \leq i + |R_1'|$ for $i\in \{1,\ldots,\sigma-n\}$
        \item $i + |R_1'|\leq w(i) \leq i + |R_1'|+|R_2'|$ for $i\in \sigma - n + R_1'$,
        \item $i + |R_1'|+|R_2'| \leq w(i) \leq i + |R_1'|+|R_2'|+|R_3'|$ for $i\in  \sigma - n + R_2'$,
        \item \ldots
        \item $i + \sum_{j=1}^{k-1}|R_j'| \leq w(i) \leq i + \sum_{j=1}^{k}|R_j'|$ for $i\in \sigma-n+R_{k-1}'$,
        \item $w(i) = i + \sum_{j=1}^{k}|R_j'|$ for $i\in \sigma-n+R_{k}'$,
    \end{itemize}
    At the right endpoint of each interval, the left part of the inequalities are strict.
\end{lemma}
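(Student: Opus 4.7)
The plan is to argue by induction on the batch index $j$, treating $\{1,\ldots,\sigma-n\}$ as the $j=0$ batch and $\sigma-n+R_j'$ as the $j$-th batch for $j\geq 1$. The inductive invariant bundles the stated two-sided bound with the positional identity
\[
w\Bigl(\sigma-n+\sum_{\ell=1}^{j}|R_\ell'|\Bigr) \;=\; \sigma-n+\sum_{\ell=1}^{j}|R_\ell'|+\sum_{\ell=1}^{j+1}|R_\ell'|
\]
at the right endpoint of the $j$-th batch (for $0\leq j<k$), together with the collapsed equality $w(i)=i+\sum_{\ell=1}^{k}|R_\ell'|$ throughout the terminal batch when $R_{k+1}'=\emptyset$.

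For the base case $j=0$, I would count exhaustively the values occupying positions $1,\ldots,w(\sigma-n)$. By monotonicity of $w$ on $\{1,\ldots,m\}$, the light-gray values in this initial segment are precisely $1,\ldots,\sigma-n$, while by the very definition of $R_1'$ together with monotonicity of $w$ on $\{m+1,\ldots,m+n\}$, the set $R_1'$ is an initial segment $\{1,\ldots,|R_1'|\}$ and the dark-gray values in the initial segment are exactly $m+1,\ldots,m+|R_1'|$. This forces $w(\sigma-n)=\sigma-n+|R_1'|$, and the two-sided bound $i\leq w(i)\leq i+|R_1'|$ pops out from the chains $w(1)<\cdots<w(i)$ and $w(i)<\cdots<w(\sigma-n)$ of distinct positive integers.

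The inductive step for $1\leq j<k$ is the identical argument with the cutoff $w(\sigma-n+\sum_{\ell=1}^{j-1}|R_\ell'|)$ replacing $w(\sigma-n)$; the iterative definition of $R_j'$ again forces $R_j'$ to be the contiguous block of dark-gray indices $\{\sum_{\ell=1}^{j-1}|R_\ell'|+1,\ldots,\sum_{\ell=1}^{j}|R_\ell'|\}$, and counting all values at positions $\leq w(\sigma-n+\sum_{\ell=1}^{j}|R_\ell'|)$ produces the next positional identity. Strictness of the left inequality at the right endpoint $i=\sigma-n+\sum_{\ell=1}^{j}|R_\ell'|$ is, after substituting the identity, equivalent to $|R_{j+1}'|>0$, which is exactly the hypothesis $R_{j+1}'\neq\emptyset$. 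For the terminal batch $\sigma-n+R_k'$, the assumption $R_{k+1}'=\emptyset$ collapses the two-sided bound to the stated equality, since no dark-gray value can then occupy a position strictly between $w(m+\sum_{\ell=1}^{k}|R_\ell'|)$ and $w(\sigma-n+\sum_{\ell=1}^{k}|R_\ell'|)$.

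The only real obstacle I anticipate is notational: keeping straight six interlocked indices (the light-gray index $i$, the batch number $j$, the partial sums $\sum|R_\ell'|$, positions versus values, and the two colorings) without sign errors demands care. The combinatorial content is entirely captured by the slogan \emph{monotonicity plus exhaustive counting of positions up to a moving cutoff}, and the proof runs parallel to that of Lemma \ref{lemmahead} with the roles of $\{1,\ldots,m\}$ and $\{m+1,\ldots,m+n\}$ interchanged, so no genuinely new ideas are needed beyond what is already used there.
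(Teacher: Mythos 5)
Your proposal is correct and takes essentially the same route as the paper, which simply says the lemma "follows easily from counting" and notes that strictness at each right endpoint comes from the nonvanishing of the next $|R_j'|$. Your induction on the batch index, with the positional identity $w\bigl(\sigma-n+\sum_{\ell\leq j}|R_\ell'|\bigr)=\sigma-n+\sum_{\ell\leq j}|R_\ell'|+\sum_{\ell\leq j+1}|R_\ell'|$ serving as the moving cutoff, is exactly the fleshed-out version of that counting argument and correctly identifies the source of the strict inequalities.
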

\begin{proof}
    The lemma follows easily from counting. The strict inequality at the right endpoint of each interval follows from the nonzeroness of $|R_j'|$ for $j\leq k$.
\end{proof}

\subsubsection{Tail Intervals for Case I, III}
We can construct the tail of the dead elements symmetric to the situation described in Lemma \ref{prophead3}:
\begin{center}
    \begin{tikzpicture}[line width=10pt]
        \draw[color = black, line width=1pt] (0.5,0.2) --(0.5,-0.2);
        \draw[opacity = 0.5] (0,0)--(0.5,0);
        \draw[opacity = 0.2] (-0.5,0)--(0,0);
        \draw[opacity = 0.5] (-0.7,0)--(-0.5,0);
        \draw[opacity = 0.2] (-1,0)--(-0.7,0);
        \draw[opacity = 0.5] (-1.5,0)--(-1,0);
        \draw[opacity = 0.2] (-2,0)--(-1.5,0);
        \draw[color = black, line width=1pt] (-2,0.2) --(-2,-0.2);
        \draw[opacity = 0.5] (-2.5,0)--(-2,0);
        \draw[opacity = 0.2] (-3,0)--(-2.5,0);
        \draw[opacity = 0.5] (-3.2,0)--(-3,0);
        \draw[opacity = 0.2] (-3.5,0)--(-3.2,0);
        \draw[color = black, line width=1pt] (-3.5,0.2) --(-3.5,-0.2);
        \draw[opacity = 0.5] (-4,0)--(-3.5,0);
        \draw[opacity = 0.2] (-4.5,0)--(-4,0);
        \draw[color = black, line width=1pt] (-4.5,0.2) --(-4.5,-0.2);
        \draw[opacity = 0.2] (-5,0)--(-4.5,0);
        \draw [decorate, line width = 1pt,decoration = {brace}] (-2,0.2) --  (0,0.2);
        \draw (-1,0.5) node {$(\sigma+1,\ldots,m)$};
        \draw (-0.5,-0.5) node {$S_1$};
        \draw [decorate, line width = 1pt,decoration = {brace,mirror}] (-1.5,-0.2) --  (0.5,-0.2);
        \draw [decorate, line width = 1pt,decoration = {brace}] (-2,0.2) --  (0,0.2);
        \draw (-3,0.5) node {$S_1$};
        \draw (-2.6,-0.5) node {$S_2$};
        \draw [decorate, line width = 1pt,decoration = {brace}] (-3.5,0.2) --  (-2.5,0.2);
        \draw [decorate, line width = 1pt,decoration = {brace,mirror}] (-3.2,-0.2) --  (-2,-0.2);
        \draw (-3.75,-0.5) node {$S_3$};
        \draw (-4.25,0.5) node {$S_2$};
        \draw [decorate, line width = 1pt,decoration = {brace}] (-4.5,0.2) --  (-4,0.2);
        \draw [decorate, line width = 1pt,decoration = {brace,mirror}] (-4,-0.2) --  (-3.5,-0.2);
        \draw (-4.75,0.5) node {$S_3$};
        \draw [decorate, line width = 1pt,decoration = {brace}] (-5,0.2) --  (-4.5,0.2);
        \draw (-5.5,0) node {$\ldots$};
    \end{tikzpicture}
\end{center}
\begin{proposition}
    The \emph{tail} can be constructed through the following algorithm:
    Denote 
    \begin{itemize}
        \item the elements $j$ in $\{m+1,\ldots,m+n\}$ such that $w(\sigma+1)\prec w(j)$ by $S_1$,
        \item the elements $j$ in $\{m+1,\ldots,m+n\}$ such that $w(\sigma-|S_1|+1)\prec w(j)\prec w(\sigma)$ by $S_2$,
        \item the elements $j$ in $\{m+1,\ldots,m+n\}$ such that $w(\sigma-|S_1|-|S_2|+1)\prec w(j)\prec w(\sigma-|S_1|)$ by $S_3$,
        \item $\ldots$
        \item the elements $j$ in $\{m+1,\ldots,m+n\}$ such that $w(\sigma-\sum_{j=1}^{k-1}|S_j|+1)\prec w(j)\prec w(\sigma-\sum_{j=1}^{k-2}|S_j|)$ by $S_k$
    \end{itemize}
    There exists a minimal $k$ such that $S_{k+1} = \emptyset$, and the set of all dead light gray elements in the tail is the union of all nonempty $S_j$.
\end{proposition}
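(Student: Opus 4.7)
The plan is to mirror Proposition \ref{prophead3}, exploiting the reflective symmetry between the head (left end) and tail (right end) of the string. In both Cases I and III we have $m \geq \sigma$, so the indices $\sigma+1,\ldots,m$ in $\{1,\ldots,m\}$ have no partner under the pairing $i \leftrightarrow m+n-\sigma+i$. By Definition \ref{permissible} these elements are automatically dead, since no permissible move can touch them; this provides the pivot for the recursion, playing the role analogous to $n-\sigma$ in Proposition \ref{prophead}.

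First I apply Proposition \ref{firstgreen}(2): the last living red element is the rightmost red such that no dead light-gray element lies to its left. Hence every $j \in \{m+1,\ldots,m+n\}$ with $w(\sigma+1) \prec w(j)$ must be dead, and this set is precisely $S_1$. Because $w$ preserves the order on $\{m+1,\ldots,m+n\}$, $S_1$ is automatically a consecutive suffix of that block. Its partners under the pairing are the greens $\{\sigma-|S_1|+1,\ldots,\sigma\}$; since those red partners are frozen in the tail, the corresponding permissible swaps are blocked, so these greens also become dead.

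The inductive step drives the construction: once the consecutive green range $\{\sigma-\sum_{j=1}^{i-1}|S_j|+1,\ldots,\sigma\}$ has been shown to be dead, any red element whose image lies strictly between $w(\sigma-\sum_{j=1}^{i-1}|S_j|+1)$ and $w(\sigma-\sum_{j=1}^{i-2}|S_j|)$ is sandwiched between two dead greens. By Lemma \ref{lemma3} and the definition of a permissible move, no such red can be shifted past the dead greens, and so it is forced into the dead set, defining $S_i$. Order preservation on $\{m+1,\ldots,m+n\}$ again makes $S_i$ a consecutive block immediately adjacent to $S_1 \cup \cdots \cup S_{i-1}$, and its green partners extend the dead green range by $|S_i|$ on the left, restoring the invariant for the next iteration.

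The procedure terminates at the minimal $k$ with $S_{k+1}=\emptyset$. The main obstacle, as in the head case, is verifying that termination is not premature: when $S_{k+1}$ is empty, one must check that the remaining elements genuinely admit permissible moves, so that no dead element has been overlooked. This is settled exactly as in Proposition \ref{prophead3}, by invoking Lemma \ref{lemmamove} to exhibit an explicit permissible swap on the surviving green/red intervals lying to the left of $w(\sigma-\sum_{j=1}^{k}|S_j|)$, thereby certifying that $\bigcup_{j=1}^{k} S_j$ is precisely the set of dead dark-gray elements in the tail.
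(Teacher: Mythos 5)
Your proof is correct and follows essentially the same strategy as the paper: identify the unpaired greens $\sigma+1,\ldots,m$ as the automatically dead pivot, apply Proposition~\ref{firstgreen}(2) to conclude the reds to their right (the set $S_1$) are dead, propagate deadness to green partners via the pairing $i\leftrightarrow m+n-\sigma+i$, and iterate until $S_{k+1}=\emptyset$, at which point Lemma~\ref{lemmamove} certifies the surviving intervals are genuinely alive. The paper's own proof is more terse, simply invoking the mirror of the Proposition~\ref{prophead3} procedure and the three termination alternatives; your version spells out the induction and the deadness propagation more explicitly, but the underlying argument is the same.
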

\begin{proof}
    In this situation, all the elements between $\sigma+1$ and $m$ are dead. Therefore, the dead elements coming from $\{m+1,\ldots,m+n\}$ are those which interlace with the elements $\sigma+1,\ldots,m$. Following the same procedure as in the proof of Proposition \ref{prophead3} from right to left along the tail, we can follow the process described in the statement of the proposition until one of the following three situations occur
    \begin{enumerate}
        \item All ``light gray'' elements from $\{1,\ldots,m\}$ are finished.
        \item All ``dark gray'' elements from $\{m+1,\ldots,m+n\}$ are finished.
        \item $S_{k+1} = \emptyset$ for some $k$.
    \end{enumerate}
    If (3) happens before (1) and (2), the adjacent first element to the left of $S_k$ is the last living element.
\end{proof}
Denoting the index of the right-most living element from $(m+1)\ldots (m+n)$ by $q$, we have the following lemma
\begin{lemma}\label{lemmatail1}
    For any $1\leq j\leq k$ such that $S_j\neq \emptyset$, we have
    \begin{itemize}
        \item $w(i) =  q-m + i$ for $i\in S_k$,
        \item $w(i) \geq q-m +i$ for $i\in S_{k-1}$,
        \item $w(i) \geq  q-m + |S_{k}| +i$ for $i\in S_{k-2}$,
        \item \ldots
        \item $w(i) \geq q-m+ \sum_{j=3}^{k}|S_j|+i$ for $i\in S_1$.
    \end{itemize}
\end{lemma}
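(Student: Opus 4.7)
The plan is to adapt the elementary counting argument used in Lemmas \ref{lemmahead} and \ref{lemmahead2} to the tail, working from $S_k$ (the dark gray dead interval adjacent to the living region) outward toward $S_1$ (the rightmost dark gray dead interval). The bounds are one-sided, with equality only at $S_k$, because the tail is anchored on the left by the last living dark gray element at position $q$ while the right end of the string imposes no further constraint.

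First I would handle the base case $i \in S_k$. Since $k$ is the minimal index with $S_{k+1} = \emptyset$, the iterative construction of the tail terminates here because no additional dark gray element further to the left can be forced dead. Combined with the fact that the light gray elements $\sigma+1,\ldots,m$ together with the dark gray sets $S_1,\ldots,S_k$ exhaust the tail with no slack between them, the elements of $S_k$ must occupy precisely the positions immediately to the right of $q$; a direct count then yields $w(i) = q - m + i$.

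Next I would proceed by a finite reverse induction on $j$ decreasing from $k-1$ down to $1$, establishing the lower bound for $i \in S_j$. By the defining property of the tail construction, $w(i)$ lies strictly between $w(\sigma - \sum_{l=1}^{j-1}|S_l|+1)$ and $w(\sigma - \sum_{l=1}^{j-2}|S_l|)$. To the left of position $w(i)$ lie the living light gray elements, the living dark gray elements up through position $q$, and crucially the full blocks $S_{j+2}, \ldots, S_k$ whose placements have already been pinned down by the inductive hypothesis. Counting these contributions (and handling the strict inequalities via the nonemptyness assumption $S_l \neq \emptyset$ for $l \leq k$) gives the claimed bound $w(i) \geq q - m + \sum_{l=j+2}^{k}|S_l| + i$; specializing to $j=1$ recovers the stated bound with sum ranging from $l=3$ to $k$.

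The main obstacle will be justifying why $|S_{j+1}|$ is conspicuously absent from the bound for $S_j$: since $S_{j+1}$ is interlaced with the light gray dead elements immediately adjacent to $S_j$ in the construction, its contribution is already absorbed into the light gray packing rather than appearing as an independent dark gray block to the left of $w(i)$. This asymmetry, analogous to the one resolved in the proof of Lemma \ref{lemmahead2} for the head in Case III, requires careful bookkeeping of which sets are packed jointly versus separately, but invokes no new ideas beyond those of the preceding lemmas.
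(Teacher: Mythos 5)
Your overall strategy is the right one and matches what the paper intends: the paper's proof is literally a one-line appeal to ``the similar enumeration technique as in Lemma~\ref{lemmahead2},'' and your plan of a base case at $S_k$ (equality, since the algorithm terminates there) followed by reverse induction with position counting is exactly that technique. You also correctly flag the one genuine subtlety — the shift by one in the cumulative sums, so that $|S_{j+1}|$ does not appear in the bound for $S_j$ — which is the same offset that appears in Lemmas~\ref{lemmahead} and~\ref{lemmahead2}.

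That said, the base case as you state it does not actually produce the claimed formula, and this is where the argument needs repair rather than just more bookkeeping. You assert that because the tail has ``no slack,'' the elements of $S_k$ sit in the positions immediately to the right of $q$, and that a direct count then gives $w(i)=q-m+i$. Under the paper's stated convention that $S_k\subseteq\{m+1,\dots,m+n\}$ (dark gray) and that $q$ is the last living red index, the block just after the last living position $w(q)$ is \emph{not} $S_k$: by the alternating $u,v$-interval structure, the first interval after the last living $u$-interval is a $v$-interval of light gray elements (the partners $i^*+1,\dots,i^*+|S_k|$ of $S_k$). If you nonetheless place $S_k$ contiguously at $w(q)+1,\dots,w(q)+|S_k|$, the count gives $w(q+r)=w(q)+r=(q-m+i^*)+r$, i.e.\ $w(i)=i^*-m+i$ for $i\in S_k$, not $q-m+i$. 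The two match only after one correctly inserts the intervening light gray block and tracks the offset $q-i^*$; equivalently, the identity $w(i)=q-m+i$ amounts to the statement that the number of light gray elements preceding $w(i)$ equals $i^*$ plus the appropriate portion of $S_k$'s partners, and this count needs to be written out rather than waved at. (The same convention tangle is present in the paper itself — its Tail Factors subsection plugs $i\in S_k$ into $L(n-\sigma+2i-w(i)+t)$, whose index $i$ runs over $\{i_w,\dots,m\}$ — so some of this confusion is inherited, but a proof of the lemma needs to commit to one convention and make the count close.)

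The inductive step has a similar soft spot: the list of contributions to the left of $w(i)$ omits the dead head elements and the dead light gray tail blocks sitting between the living region and $S_j$, which are exactly what makes the $|S_{j+1}|$ term drop out and the other $|S_l|$ terms survive. Acknowledging the obstacle is good, but ``absorbed into the light gray packing'' is not yet an argument; it is the place where the careful enumeration (the $p_l,q_l$-type balance from Lemma~\ref{lemmamove}, or the explicit interval lengths as in Lemma~\ref{lemmahead2}) has to actually be carried out.
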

\begin{proof}
    The proof follows from the similar enumeration technique as in Lemma \ref{lemmahead2}.
\end{proof}

\subsubsection{Tail Intervals for Case II}
We can construct the tail of the dead elements symmetric to the situation in Proposition \ref{prophead}:
\begin{center}
    \begin{tikzpicture}[line width=10pt]
        \draw[color = black, line width=1pt] (0.5,0.2) --(0.5,-0.2);
        \draw[opacity = 0.5] (0,0)--(0.5,0);
        \draw[opacity = 0.2] (-0.5,0)--(0,0);
        \draw[opacity = 0.5] (-0.7,0)--(-0.5,0);
        \draw[opacity = 0.2] (-1,0)--(-0.7,0);
        \draw[opacity = 0.5] (-1.5,0)--(-1,0);
        \draw[opacity = 0.2] (-2,0)--(-1.5,0);
        \draw[color = black, line width=1pt] (-2,0.2) --(-2,-0.2);
        \draw[opacity = 0.5] (-2.5,0)--(-2,0);
        \draw[opacity = 0.2] (-3,0)--(-2.5,0);
        \draw[opacity = 0.5] (-3.2,0)--(-3,0);
        \draw[opacity = 0.2] (-3.5,0)--(-3.2,0);
        \draw[color = black, line width=1pt] (-3.5,0.2) --(-3.5,-0.2);
        \draw[opacity = 0.5] (-4,0)--(-3.5,0);
        \draw[opacity = 0.2] (-4.5,0)--(-4,0);
        \draw[color = black, line width=1pt] (-4.5,0.2) --(-4.5,-0.2);
        \draw[opacity = 0.2] (-5,0)--(-4.5,0);
        \draw [decorate, line width = 1pt,decoration = {brace}] (-2,0.2) --  (0,0.2);
        \draw (-1,0.5) node {$(\sigma-m+1,\ldots,n)$};
        \draw (-0.5,-0.5) node {$S_1'$};
        \draw [decorate, line width = 1pt,decoration = {brace,mirror}] (-1.5,-0.2) --  (0.5,-0.2);
        \draw [decorate, line width = 1pt,decoration = {brace}] (-2,0.2) --  (0,0.2);
        \draw (-3,0.5) node {$S_1'$};
        \draw (-2.6,-0.5) node {$S_2'$};
        \draw [decorate, line width = 1pt,decoration = {brace}] (-3.5,0.2) --  (-2.5,0.2);
        \draw [decorate, line width = 1pt,decoration = {brace,mirror}] (-3.2,-0.2) --  (-2,-0.2);
        \draw (-3.75,-0.5) node {$S_3'$};
        \draw (-4.25,0.5) node {$S_2'$};
        \draw [decorate, line width = 1pt,decoration = {brace}] (-4.5,0.2) --  (-4,0.2);
        \draw [decorate, line width = 1pt,decoration = {brace,mirror}] (-4,-0.2) --  (-3.5,-0.2);
        \draw (-4.75,0.5) node {$S_3'$};
        \draw [decorate, line width = 1pt,decoration = {brace}] (-5,0.2) --  (-4.5,0.2);
        \draw (-5.5,0) node {$\ldots$};
    \end{tikzpicture}
\end{center}
\begin{proposition}
    The \emph{tail} can be constructed through the following algorithm:
    Denote 
    \begin{itemize}
        \item the elements $j$ in $\{1,\ldots,m\}$ such that $w(\sigma-m +1)\prec w(j)$ by $S_1'$,
        \item the elements $j$ in $\{1,\ldots,m\}$ such that $w(\sigma-m - |S_1'|+1)\prec w(j)\prec w(\sigma-m)$ by $S_2'$,
        \item the elements $j$ in $\{1,\ldots,m\}$ such that $w(\sigma-m-|S_1'|-|S_2'|+1)\prec w(j)\prec w(\sigma-m-|S_1'|)$ by $S_3'$,
        \item $\ldots$
        \item the elements $j$ in $\{1,\ldots,m\}$ such that $w(\sigma-m-\sum_{j=1}^{k-1}|S_j'|+1)\prec w(j)\prec w(\sigma-m-\sum_{j=1}^{k-2}|S_j'|)$ by $S_k'$
    \end{itemize}
    There exists a minimal $k$ such that $S_{k+1}' = \emptyset$, and the set of all dead light gray elements in the tail is the union of all nonempty $S_j'$.
\end{proposition}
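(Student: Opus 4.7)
The plan is to argue by direct analogy with the proof of the preceding proposition, interchanging the roles of the two strings $\{1,\ldots,m\}$ and $\{m+1,\ldots,m+n\}$ in order to reflect the fact that in Case II ($m \leq \sigma \leq n$) it is the segment $\Delta'+s_2$ that extends past $\Delta+s_1$ on the right, by exactly $\sigma - m$ positions. First I would observe that the $\sigma - m$ rightmost dark gray indices admit no pairing partner in $\{1,\ldots,m\}$; their exponents are disjoint from those appearing in the $\Delta+s_1$ block, so by Proposition \ref{firstgreen} they are all dead, and they form the initial dead block at the right end of the tail.

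Next I propagate leftwards. By Definition \ref{permissible}, any light gray index $j$ whose pairing partner $m+n-\sigma+j$ already lies in the dead region cannot be alive either, since no permissible move can carry its value across a dead interval. Collecting these indices gives $S_1'$, which on the basepoint's output string is cut out precisely by the condition $w(\sigma-m+1) \prec w(j)$. The same argument then forces the dark gray indices immediately to the left of the new dead light gray block to die, which in turn kills a further batch $S_2'$ of light gray indices, and so on, producing the inductive definitions of $S_3', S_4', \ldots$ stated in the proposition. This is the direct mirror of the procedure used in the proof of the Cases I/III tail.

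The iteration must terminate in exactly one of three ways: the string $\{1,\ldots,m\}$ is exhausted, the string $\{m+1,\ldots,m+n\}$ is exhausted, or $S_{k+1}' = \emptyset$ at some finite stage with elements still remaining. In the third case, Lemma \ref{lemmamove}(2) supplies an explicit permissible move on the maximal block of elements immediately to the left of $S_k'$: the intervening dark gray and light gray segments have matching total length, so the refinement procedure of Proposition \ref{segmentation} yields a genuine element of $G_\sigma$ moving that whole block nontrivially. Hence those elements are alive, and the tail indeed ends exactly at the left boundary of $S_k'$.

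The main obstacle is precisely this third termination case: one has to verify the length-matching condition needed to invoke Lemma \ref{lemmamove}(2), namely that the accumulated sizes $|S_1'|, \ldots, |S_k'|$ and the sizes of the intervening dark gray blocks balance on the two sides of the putative permissible move. This is a bookkeeping step strictly parallel to Lemmas \ref{lemmahead} and \ref{lemmatail1}, transposed across the symmetry that swaps the two strings, and it is where the combinatorial care of the argument is concentrated.
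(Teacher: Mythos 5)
Your argument takes the same route as the paper's, which simply asserts that the procedure of Proposition \ref{prophead} applies read from right to left, lists the same three termination cases, and justifies the third by exhibiting a permissible move --- exactly your structure. One point of precision in the propagation step: a light-gray $j$ cannot die because its partner $m+n-\sigma+j$ ``already lies in the dead region,'' since the initial dead block consists of the \emph{unpaired} dark-gray indices (the rightmost $\sigma-m$ of them), whereas every paired partner satisfies $m+n-\sigma+j \le 2m+n-\sigma$; that criterion is vacuous at the first step. The engine is instead Proposition \ref{firstgreen}(3): a paired index dies once a dead element from the other string sits strictly between $w(j)$ and $w(m+n-\sigma+j)$ in the output string, and only \emph{then} does its partner inherit death --- this is what produces the two-sided inequalities $w(\cdot)\prec w(j)\prec w(\cdot)$ in the statement. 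With that mechanism substituted, the rest --- the three exhaustion/termination possibilities, and the appeal to Lemma \ref{lemmamove}(2) together with Proposition \ref{segmentation} to certify aliveness past the last nonempty $S_k'$ --- is exactly what the paper compresses into one sentence.
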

\begin{proof}
    The proof follows the same procedure as in the proof of Proposition \ref{prophead} from right to left along the tail. Similarly, the algorithm terminates when one of the following three situations occur:
    \begin{enumerate}
        \item All ``light gray'' elements from $\{1,\ldots,m\}$ are finished.
        \item All ``dark gray'' elements from $\{m+1,\ldots,m+n\}$ are finished.
        \item $S'_{k+1} = \emptyset$ for some $k$.
    \end{enumerate}
    The adjacent element to the left of $S_k'$ does not interlace with any dead interval, and thus is an living element.
\end{proof}
Denoting the index of the right-most living element from $1\ldots m$ by $q'$, we have the following lemma
\begin{lemma}\label{lemmatail2}
    For any $1\leq i\leq n$ such that $S_j'\neq \emptyset$, we have
    \begin{itemize}
        \item $n-\sigma+q'+ |S_n'| + i < w(i) \leq n-\sigma+q'+ |S_n'| + |S_{n-1}'| + i$ for $i\in S'_n$,
        \item $ n-\sigma+q'+ |S_n'| + |S_{n-1}'| + i < w(i) \leq n-\sigma+q'+ \sum_{j={n-2}}^n |S_j'| + i$ for $i\in S'_{n-1}$,
        \item \ldots
        \item $w(i) >  n-\sigma+q' + \sum_{j=1}^{n}|S_j'| + i$ for $i\in S_1'
        $.
    \end{itemize}
\end{lemma}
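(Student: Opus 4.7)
The plan is to mirror the counting argument of Lemma \ref{lemmatail1}, traversing the tail right-to-left in the shuffle, with the offsets adjusted to reflect the geometry of Case II ($m \leq \sigma \leq n$). Let $k$ denote the largest index with $S'_k \neq \emptyset$, so that the tail is exhausted by the blocks $S'_1,\ldots,S'_k$ together with the dark-gray interleaving blocks that separate them; I read the statement of the lemma as ranging over $j \in \{1,\ldots,k\}$ and $i \in S'_j$, in the convention of Lemma \ref{lemmatail1}.

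First I would anchor the right end of the shuffle. By the algorithm of the preceding proposition, the tail consists of alternating dead blocks $S'_k, T_k, S'_{k-1}, T_{k-1},\ldots,S'_1,T_1$ in that order, where each $T_j$ is a dark-gray block forced to interleave $S'_j$, and all of these lie strictly to the right of $w(q')$. For $i$ in the leftmost tail block $S'_k$, counting the elements filling positions $\{1,\ldots,w(i)\}$ gives: the $q'$ living light-gray elements, the $n-\sigma$ initial dark-gray elements from the head region outside the overlap, the dark-gray block $T_k$ separating $S'_k$ from the living segment, and the portion of $S'_k$ up to $i$. Reading off the endpoints reproduces the stated bounds for $S'_k$, with the strict inequality forced by the nonemptiness of $T_k$ (itself a consequence of $S'_k \neq \emptyset$ via condition (1) of Definition \ref{permissible}). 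The bounds for $S'_{j-1}$ then follow from those for $S'_j$ by downward induction on $j$: moving the counting window past $S'_j$ and its neighboring interleaver shifts $w(i)$ by exactly $|S'_j| + |T_j|$, and iterating telescopes to the $\sum_{l \geq j}|S'_l|$ terms appearing in the lemma.

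The principal obstacle is the bookkeeping needed to verify that the offset $n - \sigma + q'$ appears uniformly across every bound, which amounts to confirming that none of the $n-\sigma$ initial dark-gray head elements (as described in Propositions \ref{prophead} and \ref{prophead3}) are double-counted against the tail's interleaving blocks. A secondary subtlety is matching each $|T_j|$ to the correct accumulating sum of $|S'_l|$'s; this I would justify by reversing the construction of the preceding proposition, observing that each $T_j$ is pinned down by sharing its left endpoint with the $S'_j$-defining threshold. Once this bookkeeping is in place, the inductive step becomes an enumeration entirely parallel to Lemma \ref{lemmahead2}, and the asymmetry in which bounds are strict versus weak is controlled by exactly the same nonemptiness argument used there.
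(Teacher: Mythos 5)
Your overall approach — enumerate the positions right-to-left, anchoring at the first block of the tail and then stepping inductively — matches what the paper intends; the paper's own proof is the single sentence ``The proof follows from the similar enumeration technique as in Lemma~\ref{lemmahead},'' so any honest proof must perform exactly this kind of counting. You also correctly note that the indices $n$ in the statement should be read as $k$, the largest index with $S'_k \neq \emptyset$, in parallel with Lemma~\ref{lemmatail1}.

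However, the concrete count you give for $w(i)$ with $i \in S'_k$ is off. You list four contributions to positions $\{1,\ldots,w(i)\}$: the $q'$ light-gray elements of index $\leq q'$, the $n-\sigma$ unpaired head dark-gray elements, the separating dark-gray block $T_k$, and the portion of $S'_k$ up to $i$. This omits the dark-gray elements of the living segment and those paired with head-dead light-gray — in absolute indices, $m+n-\sigma+1,\ldots,m+n-\sigma+q'$, a further $q'$ dark-gray elements. For a basepoint element, all of these sit strictly to the left of $w_0(q'+1)$: the pairing condition $w_0(j) < w_0(m+n-\sigma+j)$ for $j \leq q'$ and the placement of dead elements at the extremes force them into positions preceding the tail. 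Including them, $w(i) - i = n - \sigma + q' + |T_k|$, not $n - \sigma + |T_k|$ as your count yields. This discrepancy of $q'$ is exactly the term $q'$ that appears in the bounds of the lemma, so the omission is not harmless: with your count the stated lower bound becomes $|T_k| > q' + |S'_k|$, a much stronger and in general false requirement, whereas the correct count reduces it to $|T_k| > |S'_k|$, which the nonemptiness of the $S'_j$-construction delivers. The telescoping downward induction and the strict/weak inequality bookkeeping you describe are sound once the base count is repaired.
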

\begin{proof}
    The proof follows from the similar enumeration technique as in Lemma \ref{lemmahead}.
\end{proof}

\section{\texorpdfstring{$L$}{L}-function Calculations}
\subsection{\texorpdfstring{$L$}{L}-functions}
For self-dual cuspidal automorphic representations $\tau$, the Rankin-Selberg $L$-functions $L(s,\tau\times\hat{\tau})$ satisfies the following functional equation
\[
    L(s,\tau\times\hat{\tau}) = \epsilon(s,\tau\times\hat{\tau})L(1-s,\tau\times\hat{\tau})
\]
with the epsilon factor $\epsilon = c^{s-\frac{1}{2}}$ for some rational number $c$. Without incurring any confusion, we will simply denote $L(s,\tau\times\hat{\tau})$ and $\epsilon(s,\tau\times\hat{\tau})$ by $L(s)$ and $\epsilon(s)$, respectively. By \cite{moeglin1989spectre}, the normalizing factor defined in the previous section is equal to the product
\begin{align*}
    r(w,\underline{s}) &= \prod_{\substack{i<j\\w(i)>w(j)}} \frac{L(\nu_i-\nu_j)}{L(1+\nu_i-\nu_j)\epsilon(\nu_i-\nu_j,\tau\times\hat{\tau})}.
\end{align*}
In the situation we are interested in, we specialize the parameters to the following values:
\begin{align*}
    \nu_i &= \frac{1-m}{2}+i-1+s_1, \\
    \nu_j &= \frac{1-n}{2}+j-m-1+s_2,\\
    s &= s_1-s_2.
\end{align*}
For any element $w$ representable by the diagram
\begin{center}
    \begin{tikzpicture}[line width=10pt]
        \draw (0.5,0.5) node {$v_0$};
        \draw[opacity = 0.2] (0,0) --(1,0);
        \draw (1.5,0.5) node {$u_1$};
        \draw[opacity = 0.5] (1,0) --(2,0);
        \draw (2.5,0.5) node {$v_1$};
        \draw[opacity = 0.2] (2,0) --(3,0);
        \draw (3.5,0.5) node {$u_2$};
        \draw[opacity = 0.5] (3,0) --(4,0);
        \draw (5,0.5) node {$\ldots$};
        \draw[opacity = 0.1] (4,0) --(6,0);
        \draw (6.5,0.5) node {$u_i$};
        \draw[opacity = 0.5] (6,0) --(7,0);
        \draw (7.5,0.5) node {$v_i$};
        \draw[opacity = 0.2] (7,0) --(8,0);
        \draw (8.5,0.5) node {$\ldots$};
        \draw[opacity = 0.1] (8,0) --(9,0);
    \end{tikzpicture},
\end{center}
the collection of inverted pairs $\mathrm{inv}(w)=\{(i,j)\mid i<j, w(i)>w(j)\}$ is the set
\[
   \mathrm{inv}(w) =  (v_1\times u_1)\bigcup (v_2\times(u_1\cup u_2) )\bigcup (v_3\times(u_1\cup u_2\cup u_3))\bigcup\ldots.
\]
Since $\nu_i-\nu_j = s+\frac{m+n}{2} + i-j$, 
For any $w\in \mathbb{S}_m\times\mathbb{S}_n\backslash\mathbb{S}_{m+n}$, we denote by $i_w$ the smallest $i$ such that there exists a $j>i$ with $w(j)<w(i)$. For each fixed $i\geq i_w$, denoting the largest $j$ such that $(i,j)\in \mathrm{inv}(w)$ by $j_i$, the product corresponding to all $j$'s such that $(i,j)\in \mathrm{inv}(w)$ is
\begin{align*}
     &\prod_{w(j)<w(i)}  \frac{L(\nu_i-\nu_j)}{L(1+\nu_i-\nu_j)\epsilon(\nu_i-\nu_j)}\\
    =& \frac{L(s+\frac{m+n}{2} + i-(m+1))}{L(s+\frac{m+n}{2} + i-(m+1)+1)} \times\ldots\times\frac{L(s+\frac{m+n}{2} + i-j_i)}{L(s+\frac{m+n}{2} + i-j_i+1)}\times\frac{1}{\text{products of }\epsilon\text{'s}}\\
    =& \frac{L(s+\frac{m+n}{2} + i-j_i)}{L(s-\frac{m-n}{2}+i)} \times\frac{1}{\text{products of }\epsilon\text{'s}}.
\end{align*}
Replacing $s$ by $s = \frac{m+n}{2} -\sigma + t$ and taking into account that
\[
    w(i) = i + j_i - m,
\]
after plugging in all the parameters, the product of $\epsilon$-factors can be expressed as
\[
    \rho_i(w,t)=c^{\sum_{k=m+1}^{j_i}(s+\frac{m+n-1}{2}+i-k)} = c^{\left(w(i)-i\right) (n-\sigma+i-1+t)-\frac{1}{2} \left(w(i)-i\right){}^2}, 
\]
and the normalization factor of the intertwining operator is thus equal to
\begin{equation}\label{assemble}
    r(w,t) = \prod_{i=i_w}^m \frac{L(n-\sigma+2i-w(i)+t)}{L(n-\sigma+i+t)}\rho_i(w,t)^{-1}.
\end{equation}
For any $\sigma$, we will be only interested in the analytic property of $r(w,t)$ in the region $t\geq 0$, in which there will be no cancellation of the poles of the denominator by the zeros of the denominator in the critical strip.\\

The following lemma is a generalization of \cite[Lemma 8.5]{hanzer2015images}:

\begin{lemma}\label{lemc}
For any section $f\in I(\tau,\underline{s})$, the images of the normalized intertwining operator $N(w,\underline{s})f$ are equal as long as $w$ lies in the same orbit $\mathcal{O}_{\underline{s}}$.
\end{lemma}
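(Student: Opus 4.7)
My plan is to reduce the equality $N(w,\underline{s})f = N(w',\underline{s})f$ for $w,w'$ in a common orbit $\mathcal{O}_{\underline{s}}$ to a rank-one identity for normalized intertwiners between two equal characters. First, by Proposition~\ref{segmentation}, the group $G_\sigma \cong \ZZ_2^r$ of permissible moves is generated by the $r$ elementary swaps $g_1,\ldots,g_r$ that exchange each green segment $K_j$ with its corresponding red segment $L_j$. Any element of the orbit has the form $g w$ for a word $g$ in these generators, so a straightforward induction on the length of that word reduces the problem to the single-generator case $w' = g_j w$.

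For such a pair I would apply the cocycle relation
\[
    N(g_j w,\underline{s}) \;=\; N(g_j,\, w\underline{s})\circ N(w,\underline{s}),
\]
valid after choosing compatible reduced expressions so that the lengths of $g_j$ and $w$ add, and so reduce the lemma to the statement that $N(g_j, w\underline{s})$ is the identity of $I(\tau, w\underline{s})$. Viewed as a permutation of $\{1,\ldots,m+n\}$, the generator $g_j$ lies in the stabilizer of the parameter sequence of $\rho_{m,n}(\tau,\underline{s})^w$: by construction $K_j$ and $L_j$ have equal length, and the entries of $\rho^w$ at the positions they cover coincide pairwise. Choosing a reduced expression $g_j = s_{\alpha_1}\cdots s_{\alpha_N}$ and applying the cocycle step by step, each simple reflection $s_{\alpha_\ell}$ in the decomposition acts, at its intermediate parameter, on two adjacent entries which are equal. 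The problem therefore reduces to the rank-one identity: if $\nu_\alpha = \nu_{\alpha+1}$, then the normalized operator $N(s_\alpha,\nu)$ is the identity of $I(\tau,\nu)$.

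This rank-one identity is the heart of the argument and is the direct generalization of \cite[Lemma~8.5]{hanzer2015images} from characters to a cuspidal $\tau$. At the equi-character point the normalizing scalar $L(0)/(L(1)\epsilon(0))$ is a finite nonzero constant, since by Section~2.2 the simple poles of $L(s,\tau\times\hat\tau)$ at $s=0$ and $s=1$ produce a well-defined ratio. The involutive relation $N(s_\alpha,\nu)N(s_\alpha,s_\alpha\nu)=\mathrm{id}$ then forces $N(s_\alpha,\nu)^2=\mathrm{id}$, and the self-duality of $\tau$ together with the functional equation $L(s,\tau\times\hat\tau) = \epsilon(s,\tau\times\hat\tau)L(1-s,\tau\times\hat\tau)$ pins the sign down to $+1$ after an explicit place-by-place computation on the unnormalized rank-one intertwiner. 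This sign verification is the main technical obstacle; once it is in place, the three preceding reduction steps assemble into the desired equality $N(w',\underline{s})f = N(w,\underline{s})f$.
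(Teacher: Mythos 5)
Your reduction strategy is essentially the paper's: factor $w'=gw$ for a permissible move $g$ that fixes the parameter $w\underline{s}$, apply the cocycle relation for normalized operators, and reduce to showing that $N(g,w\underline{s})$ is the identity on $\Ind_P^G\rho_{m,n}(\tau,\underline{s})^w$. Decomposing $g$ as a reduced word inside the Young subgroup stabilizing the parameter, so that each simple reflection swaps two adjacent equal entries, is a legitimate and arguably cleaner alternative to the paper's device of conjugating the corresponding blocks to adjacent positions by an auxiliary element $v$ and then transposing; both routes terminate at the same rank-one claim that $N(0)=1$ on $\Ind_{P_{[a,a]}}^{G_{2a}}(\tau\boxtimes\tau)$. (Incidentally, you need no length-additivity hypothesis for the cocycle relation once the operators are normalized; this is part of \cite[Proposition~38]{knapp1971interwining}, which the paper cites.)

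The genuine gap is exactly the step you flag as ``the main technical obstacle.'' You establish only that $N(s_\alpha,\nu)^2=1$ at the equi-parameter point, and then assert that self-duality of $\tau$, the functional equation of $L(s,\tau\times\hat\tau)$, and an ``explicit place-by-place computation on the unnormalized rank-one intertwiner'' will fix the sign to $+1$. Two problems. To pass from ``involution'' to ``scalar $\pm1$'' you implicitly use that $\Ind_{P_{[a,a]}}^{G_{2a}}(\tau_v\boxtimes\tau_v)$ is irreducible at every place; this holds for $GL$ because $R$-groups are trivial, but it needs to be said. More seriously, the functional equation together with self-duality only yields $N(0)N(0)^{*}=1$ and $N(0)^2=1$ and is symmetric under $N(0)\mapsto -N(0)$; the sign is a normalization-dependent fact that these relations cannot decide, and a direct computation on the unnormalized $M(s_\alpha)$ must confront the simultaneous simple poles of $M(s_\alpha)$ and of the normalizer $L(s)/L(1+s)\epsilon(s)$ at $s=0$. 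The paper closes this by invoking \cite[Proposition~6.3]{keys1988artin}: the $\psi$-generic (Whittaker-functional) normalization forces $N(0)$ to act trivially, hence $N(0)=1$ by irreducibility. Without that citation, or a reproduction of the Whittaker-functional argument, your proof is incomplete precisely at its crux.
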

\begin{proof}
    We would like to compare two $N(w_1,\underline{s})$ and $N(w_2,\underline{s})$ for different $w_1,w_2$ with isomorphic twists \[\rho_{m,n}(\tau,\underline{s})^{w_1} \cong \rho_{m,n}(\tau,\underline{s})^{w_2}.\] The normalized intertwining operators are holomorphic in the dominant chamber, and each normalized intertwining operator
\[
    N(w_i,\underline{s}): \Ind_P^G \rho_{m,n}(\tau,\underline{s}) \longrightarrow \Ind_P^G \rho_{m,n}(\tau,\underline{s})^{w_i},
\]
is invertible. Since the cuspidal representations $\tau$ are unitary, and by \cite[Proposition 38]{knapp1971interwining} the normalized intertwining operators satisfy
\begin{enumerate}
    \item $N(w_1w_2,\underline{s}) = N(w_1,w_2\underline{s})N(w_2,\underline{s})$
    \item $N(w,\underline{s})^* = N(w^{-1},-\overline{\underline{s}})$,
    \item $N(w,\underline{s})$ is unitary for $s = -\overline{s}$.
\end{enumerate}
Since we can factorize the elements $w_2 =  u w_1$ such that $u$ is a permissible move, which is known to be an involution. The intertwining operator
\[
    N(u,w_2\underline{s}): \Ind_P^G \rho_{m,n}(\tau,\underline{s})^{w_2}\longrightarrow \Ind_P^G \rho_{m,n}(\tau,\underline{s})^{w_1}
\]
is thus a self-adjoint, unitary involution. The operator $N(u,w_2\underline{s})$ can be factorized further into:
\[
    N(u,w_2\underline{s}) = N(v^{-1},vw_2\underline{s}) N(\iota, v w_2\underline{s})N(v,w_2\underline{s})
\]
where $v$ is the Weyl group element sending each pairs of corresponding blocks in the permissible move to adjacent blocks, and the operator $N(\iota, v w_2\underline{s})$ is the transposition of these adjacent blocks. In fact, the intertwining operator $N(\iota, v w_2\underline{s})$ is a product of self-intertwining operators $N(0)$ of $\Ind_{P_{[a,a]}}^{G_{2a}}\left(\tau\boxtimes \tau\right)$ corresponding to the Weyl group element swapping the two Levi blocks. By \cite[Proposition 6.3]{keys1988artin}, such an operator $N(0) = 1$, and thus $N(u,w_2\underline{s}) = 1$.
\end{proof}
As a result of this lemma, we can collect the normalization factors in the constant term of $E^{m,n}(\otimes_v f_v,\underline{s})$ into sums over orbits, as in
\begin{equation}\label{constmn}
    c_{U_{m+n}}E^{m,n}(\otimes_v f_v,\underline{s}) = \sum_{\mathcal{O}_{\underline{s}}(w)}\left(\sum_{w'\in \mathcal{O}_{\underline{s}}(w)} r(w',\underline{s})\right) \bigotimes_{v}N_v(w',\underline{s})f_v.
\end{equation}
Each sum over an orbit is denoted by $R(w,\underline{s})=\sum_{w'\in \mathcal{O}_{\underline{s}}(w)} r(w',\underline{s})$. We will discuss the cancellations of poles in the sum $R(w,\underline{s})$.

\subsection{Calculation on a Single Living Interval}
Now we calculate the sums on a single living interval. In this section, following Proposition \ref{segmentation}, for each living interval $K_i$ with a starting interval $\Xi_{k_{i-1}+1}$ and ending interval $\Xi_{k_i}$, we denote these individual green intervals by
\[
    \Xi_{k_{i-1}+1}, \Xi_{k_{i-1}+2}, \ldots, \Xi_{k_{i}}
\]
with $|\Xi_{k_{i-1}+1}|+\ldots+|\Xi_{k_{i}}|= |K_i|$. Their corresponding red intervals are denoted by
\[
    \Sigma_{k_{i-1}+1}, \Sigma_{k_{i-1}+2}, \ldots, \Sigma_{k_{i}}.
\]
\\
\indent Denoting by $A_p(t)$ the product of $L(n-\sigma+2i-w(i)+t)\rho_i(w,t)^{-1}$ along each $K_p$, and $B_p(t)$ the product of $L(n-\sigma+2i-w(m+n-\sigma+i)+t)\rho_i(w,t)^{-1}$ along each $L_p$. Since $i = s(\Xi_{k_{p-1}+1}) + r$ and $w(i) = n - \sigma + 2s(\Xi_{k_{p-1}+1}) + \sum_{\Sigma_{k_{p-1}+s}\prec w(i)} |\Sigma_{k_{p-1}+s}| + r$, then
\[
    n-\sigma+2i -w(i) = r - \sum_{\Sigma_{k_{p-1}+s}\prec w(i)} |\Sigma_{k_{p-1}+s}|.
\]
After swapping the corresponding green and red intervals, we obtain
\[
    w(m+n-\sigma+i) = n - \sigma + 2s(\Xi_{k_{p-1}+1}) + \sum_{\Xi_{k_{p-1}+s}\prec w(m+n-\sigma+i)} |\Xi_{k_{p-1}+s}| + r
\]
and thus
\[
    n-\sigma+2i -w(m+n-\sigma+i) = r - \sum_{\Xi_{k_{p-1}+s}\prec w(m+n-\sigma+i)} |\Xi_{k_{p-1}+s}|.
\]
\subsection{Product of  \texorpdfstring{$L$}{L} Factors}
In this section we will calculate $A_p(t)$ and $B_p(t)$. We understand that some of the $\Xi_i, \Sigma_i$ intervals group together on the same $v,u$ interval. If we denote the endpoints of these intervals by
\[
    s(\Xi_{k_{p-1}+1}) + \{a_1,\ldots,a_r\},
\]
then the corresponding product of $L$-functions in $A_p(t)$ and $B_p(t)$ are given by
\[
    A_p'(t) = L(1+t)\ldots L(a_1+t)L(a_1-b_1+1+t)\ldots L(a_2-b_1+t)\ldots L(a_{r-1}-b_{r-1}+1+t)\ldots L(a_{r}-b_{r-1}+t)
\]
and
\[
    B_p'(t) = L(-a_1+1+t)\ldots L(b_1-a_1+t)L(b_1-a_2+1+t)\ldots L(b_2-a_2+t)\ldots L(b_{r-1}-a_{r}+1+t)\ldots L(b_{r}-a_{r}+t).
\]
We can compare these two products, and it turns out that:
\begin{lemma}
    The two products above satisfy \[A'_p(t) = c^{\sum_r\left(\frac{\sum_{\Xi_{k_{p-1}+s}\prec w(m+n-\sigma+i)}|\Xi_{k_{p-1}+s}|-\sum_{\Sigma_{k_{p-1}+s}\prec w(i)}|\Sigma_{k_{p-1}+s}|}{2}\right)}B'_p(-t).\]
\end{lemma}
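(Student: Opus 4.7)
The plan is to apply the functional equation $L(s) = c^{s-1/2}L(1-s)$ (recalled at the start of Section 4.1) factor by factor to $A_p'(t)$. Writing $L(x+t) = c^{x+t-1/2}L((1-x)+(-t))$ and pulling out all the $c$-powers gives
\[
A_p'(t) = c^{\sum_x (x+t-1/2)}\prod_x L\bigl((1-x)+(-t)\bigr),
\]
where $x$ ranges over the multiset of arguments $\{1,\ldots,a_1\}\cup\{a_1-b_1+1,\ldots,a_2-b_1\}\cup\cdots\cup\{a_{r-1}-b_{r-1}+1,\ldots,a_r-b_{r-1}\}$ appearing in the defining product for $A_p'(t)$. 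The proof then splits into two subclaims: first, that $\prod_x L((1-x)+(-t))$ equals $B_p'(-t)$; second, that the accumulated exponent of $c$ simplifies to the form stated in the lemma.

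For the first subclaim I would work block by block. Reflecting the $k$-th block of $A_p'$ under $x\mapsto 1-x$ produces the set $\{b_{k-1}-a_k+1,\ldots,b_{k-1}-a_{k-1}\}$ (with the convention $a_0=b_0=0$), whereas the $k$-th block of $B_p'$ is $\{b_{k-1}-a_k+1,\ldots,b_k-a_k\}$. Each pair of blocks shares its left endpoint but generally differs at its right endpoint. Since $|K_p|=a_r=b_r$ (the total lengths of the green and red intervals inside $K_p$ coincide), the ``excess'' at the right end of each reflected $A_p'$-block is absorbed into the left end of the next $B_p'$-block, and a telescoping argument---most transparently carried out by induction on $r$---shows that the two multisets of arguments coincide. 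Concretely, one checks that both multisets describe the same ``filled'' union of integer intervals determined by the refined $\Xi/\Sigma$ partition.

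For the second subclaim, the multiset identity above forces $\sum_x x + \sum_y y = N$ with $N=|K_p|$. Combining this with the formulas $(\text{arg }A)_r = r - \sum_{\Sigma_{k_{p-1}+s}\prec w(i)}|\Sigma_{k_{p-1}+s}|$ and $(\text{arg }B)_r = r - \sum_{\Xi_{k_{p-1}+s}\prec w(m+n-\sigma+i)}|\Xi_{k_{p-1}+s}|$ derived just before the lemma, the $t$-independent part of the exponent collapses to $\sum_r \tfrac{1}{2}\bigl((\text{arg }A)_r-(\text{arg }B)_r\bigr)$, which is precisely the claimed $\sum_r \tfrac{\sum|\Xi|-\sum|\Sigma|}{2}$. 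The main obstacle is the block-matching of the first subclaim: the interlacing of the refined $\Xi/\Sigma$ partition of $K_p$ (from Section 3.5) must be tracked carefully so that the individually misaligned reflected $A_p'$-blocks and $B_p'$-blocks reassemble to the same multiset. The rest of the computation is bookkeeping that relies only on the convention $a_0=b_0=0$ and the identity $a_r=b_r$.
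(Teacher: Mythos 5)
Your approach mirrors the paper's exactly: apply the functional equation $L(x+t) = c^{x+t-1/2}L(1-x-t)$ factor by factor, establish that the multiset of reflected arguments of $A_p'$ coincides with the multiset of arguments of $B_p'$, then compute the accumulated exponent of $c$. The paper proves the multiset coincidence by reordering (``a bijection can be established after sending each index $p$ to $1-p$''), while you give a slightly more explicit block-by-block/telescoping version, but the content is the same. For the exponent, the paper applies the functional equation a second time to get a zero constraint and substitutes; you use the multiset identity $\sum_x x + \sum_y y = N$ directly. These are algebraically equivalent.

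However, your own bookkeeping exposes a gap that you do not close. After pulling out all the $c$-powers, the exponent is $\sum_x (x+t-1/2) = \bigl(\sum_x x - N/2\bigr) + Nt$ with $N = |K_p|$. You verify that the $t$-\emph{independent} part equals the stated $\sum_r \tfrac{\sum|\Xi|-\sum|\Sigma|}{2}$, and that computation is correct. But the claimed identity in the lemma has a $t$-\emph{independent} exponent, so a proof must also show that the residual $c^{Nt}$ vanishes --- and it does not ($N>0$ on any nonempty living interval). You hedge with the phrase ``the $t$-independent part of the exponent'' but never say what becomes of the $c^{Nt}$; as written, you have not proved the statement. (In fact the paper's own proof makes the same slip: its zero constraint is $t$-free, and subtracting it from $\sum_r(r-\sum|\Sigma|-\tfrac12+t)$ still leaves $\sum_r t = Nt$ in the exponent. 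The lemma as stated appears to be off by exactly this $c^{Nt}$, consistent with the $c^{-t\alpha}$ factor that reappears in the next proposition.) You should either prove that the $c^{Nt}$ factor cancels (it does not, in the form stated) or explicitly flag that the lemma's statement needs the correction $c^{|K_p| t}$.
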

\begin{proof}
    Since each $b_i\prec a_i$, we can reorder the factors in $A'_p(t)$ and $B'_p(-t)$. If we represent these factors in $A'_p(t)$ as intervals, and they can be illustrated in the following diagram:
    \begin{center}
        \begin{tikzpicture}[line width=5pt]
            \draw[opacity = 0.6] (0,0) --(2,0);
            \draw (-3,0) node {$L(1+t)\ldots L(a_1+t)$};
            \draw[opacity = 0.6] (1,-0.5) --(3,-0.5);
            \draw (-2,-0.5) node {$L(a_1-b_1+1+t)\ldots L(a_2-b_1+t)$};
            \draw[opacity = 0.6] (2,-1) --(4,-1);
            \draw (-1,-1) node {$\ldots$};
            \draw (4,-1.5) node {$\ldots$};
            \draw[opacity = 0.6] (5,-2) --(7,-2);
            \draw (1,-2) node {$L(a_{r-1}-b_{r-1}+1+t)\ldots L(a_{r}-b_{r-1}+t)$};
        \end{tikzpicture},
    \end{center}
    while the factors in $B'_p(t)$ correspond to the intervals
    \begin{center}
        \begin{tikzpicture}[line width=5pt]
            \draw[opacity = 0.6] (1,0) --(2,0);
            \draw (-3,0) node {$L(-a_1+1+t)\ldots L(b_1-a_1+t)$};
            \draw[opacity = 0.6] (2,-0.5) --(3,-0.5);
            \draw (-2,-0.5) node {$L(b_1-a_2+1+t)\ldots L(b_2-a_2+t)$};
            \draw[opacity = 0.6] (3,-1) --(4,-1);
            \draw (-1,-1) node {$\ldots$};
            \draw (4,-1.5) node {$\ldots$};
            \draw[opacity = 0.6] (0,-2) --(7,-2);
            \draw (-3,-2) node {$L(b_{r-1}-a_{r}+1+t)\ldots L(b_{r}-a_{r}+t)$};
        \end{tikzpicture}
    \end{center}
between which a bijection can be established after sending each index $p$ to $1-p$. By the functional equation of the Rankin-Selberg $L$-function $L(s,\tau\times\hat\tau)$, we have
\[
    A'_p(t) = c^{\sum_r\left(r - \sum_{\Sigma_{k_{p-1}+s}\prec w(i)}|\Sigma_{k_{p-1}+s}|-1/2 + t\right)}B'_p(-t).
\]
Applying the functional equation again, we can see that
\[
    1 = c^{\sum_r\left(r - \sum_{\Sigma_{k_{p-1}+s}\leq w(i)}|\Sigma_{k_{p-1}+s}|-1/2 + t\right)}c^{\sum_r\left(r - \sum_{\Xi_{k_{p-1}+s}\leq w(m+n-\sigma+i)}|\Xi_{k_{p-1}+s}|-1/2 - t\right)}.
\]
Therefore,
\[
    \sum_r \left(r-\frac{\sum_{\Sigma_{k_{p-1}+s}\prec w(i)}|\Sigma_{k_{p-1}+s}|+\sum_{\Xi_{k_{p-1}+s}\prec w(m+n-\sigma+i)}|\Xi_{k_{p-1}+s}|}{2}-\frac{1}{2}\right) = 0
\]
and the original factor becomes
\[
    A'_p(t) = c^{\sum_r\left(\frac{\sum_{\Xi_{k_{p-1}+s}\prec w(m+n-\sigma+i)}|\Xi_{k_{p-1}+s}|-\sum_{\Sigma_{k_{p-1}+s}\prec w(i)}|\Sigma_{k_{p-1}+s}|}{2}\right)}B'_p(-t).
\]
\end{proof}

\subsection{Product of \texorpdfstring{$\rho$}{rho}-factors}
This subsection calculates the cancellations of poles in $R(w,\underline{s})$ from the living intervals.
\begin{proposition}
    Along each living interval, $A_p(t) + B_p(t)$ is holomorphic.
\end{proposition}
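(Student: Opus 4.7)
The plan is to split each of $A_p(t)$ and $B_p(t)$ into an $L$-factor piece and a $\rho$-factor piece, and then use the preceding lemma to show the residues cancel. Writing $A_p(t) = A'_p(t) P_A(t)$ and $B_p(t) = B'_p(t) P_B(t)$ with $P_A, P_B$ the products of the factors $\rho_i(\cdot,t)^{-1}$ along the green and red indices of the living interval, the functions $P_A$ and $P_B$ are entire and nowhere zero (each is $c$ raised to a polynomial in $t$). Hence every pole of $A_p$ or $B_p$ arises from $A'_p$ or $B'_p$, and since $L(s,\tau\times\hat\tau)$ has only simple poles at $s = 0, 1$, each such pole is simple and is located where some $L$-factor argument falls into $\{0,1\}$.

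The preceding lemma supplies the identity $A'_p(t) = c^{E(p)} B'_p(-t)$, equivalently $B'_p(t) = c^{-E(p)} A'_p(-t)$. Combined with the elementary fact that $\mathrm{Res}_{t=t_0} A'_p(-t) = -\mathrm{Res}_{t=-t_0} A'_p(t)$, this pairs each simple pole of $B'_p(t)$ with one of $A'_p$ and determines their residues explicitly. Substituting into $A_p + B_p$ one obtains, at every candidate pole $t_0$,
\[
    \mathrm{Res}_{t=t_0}\bigl(A_p(t)+B_p(t)\bigr) = P_A(t_0)\,\mathrm{Res}_{t=t_0} A'_p(t) \,-\, c^{-E(p)}\,P_B(t_0)\,\mathrm{Res}_{t=-t_0} A'_p(t),
\]
so the proof reduces to verifying an identity of the form $c^{E(p)} P_A(t_0) = P_B(t_0)$ at every relevant pole $t_0$.

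To check this identity, I would use $\rho_i(w,t) = c^{(w(i)-i)(n-\sigma+i-1+t) - \frac{1}{2}(w(i)-i)^2}$ together with the observation that the permissible move swapping $K_p$ with $L_p$ exchanges $w(i)$ with $w(m+n-\sigma+i)$ for each relevant $i$. The ratio $P_A(t)/P_B(t)$ is then a single $c$-power whose exponent is a sum over $i \in K_p$ of expressions quadratic in $w(i), w(m+n-\sigma+i)$ and affine in $t$. My approach is to regroup these summands across the refined sub-intervals $\Xi_{k_{p-1}+s}$ and $\Sigma_{k_{p-1}+s}$ and telescope the contributions; block by block, the differences of the $(w(i)-i)$ terms should reproduce the interval-length sums $\sum |\Xi_{k_{p-1}+s}|$ and $\sum |\Sigma_{k_{p-1}+s}|$ that define $E(p)$ in the preceding lemma.

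The main obstacle I anticipate is exactly this combinatorial bookkeeping: matching the quadratic expression in the $\rho$-exponent against the $|\Xi|$ and $|\Sigma|$ sums, and simultaneously verifying that the affine $t$-dependent parts drop out (so that the identity really holds at every $t_0$, not only at $t_0 = 0$). Once the identity $c^{E(p)} P_A = P_B$ is established at each candidate pole, every residue of $A_p(t) + B_p(t)$ vanishes and the claimed holomorphicity follows.
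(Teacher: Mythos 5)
Your decomposition $A_p = A'_p P_A$, $B_p = B'_p P_B$ and your plan to use the preceding lemma's functional-equation identity to pair residues are exactly what the paper does, so the skeleton of the argument is right. However, two points need correcting. First, your residue formula only collapses into the single identity $c^{E(p)}P_A(t_0) = P_B(t_0)$ when $t_0 = -t_0$, i.e.\ $t_0 = 0$; for $t_0 \neq 0$ the residues $\mathrm{Res}_{t=t_0}A'_p$ and $\mathrm{Res}_{t=-t_0}A'_p$ live at different points and are a priori unrelated. This is harmless because in the region $t \geq 0$ every $L$-factor argument in $A'_p$ and $B'_p$ lies outside $\{0,1\}$ except at $t = 0$, so $t_0 = 0$ is the only candidate pole — but you should say this rather than aim for ``every $t_0$.'' Second, and more substantively, your expectation that the affine $t$-dependent parts of the $\rho$-exponent will ``drop out'' so that $c^{E(p)}P_A \equiv P_B$ as functions of $t$ is not correct. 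The paper carries out exactly the regrouping you sketch and finds that, after combining $c^{E(p)}$ with $P_A/P_B$, the $t$-\emph{independent} part of the exponent cancels (via the rearrangement $k^2 - (1-k)^2 = 2k-1$) but a genuine linear term $-t\alpha$ survives. This does not break the proof: $c^{-t\alpha}$ has value $1$ at $t=0$, so the residue at $t=0$ still cancels. If you pursue your plan and insist on full $t$-independence you will get stuck; the correct conclusion of the combinatorial bookkeeping is only that the constant part of the exponent vanishes, which is all that is needed at the single pole $t_0 = 0$.
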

\begin{proof}
Each individual $\rho$-factor can be calculated with the following procedure:\\

For any $i$, we have
\begin{align*}
    \rho_i(w,t) &= c^{(w(i)-i)(n-\sigma+i-1+t) - \frac{1}{2}(w(i)-i)^2}\\
    &=c^{(n-\sigma+s(\Sigma_{k_{p-1}+1})+\sum_i|\Sigma_i|)(n-\sigma+i-1+t) - \frac{1}{2}(n-\sigma+s(\Sigma_{k_{p-1}+1})+\sum_i|\Sigma_i|)^2}
\end{align*}
and
\begin{align*}
    \rho'_i(w,t) &= c^{(w(m+n-\sigma+i)-i)(n-\sigma+i-1+t) - \frac{1}{2}(w(m+n-\sigma+i)-i)^2}\\
    &=c^{(n-\sigma+s(\Sigma_{k_{p-1}+1})+\sum_i|\Xi_i|)(n-\sigma+i-1+t) - \frac{1}{2}(n-\sigma+s(\Sigma_{k_{p-1}+1})+\sum_i|\Xi_i|)^2}.
\end{align*}
The differences of their products is
\begin{align*}
    c^{\sum_i(\sum_j(|\Sigma_j|-|\Xi_j|))(n-\sigma+s+r-1+t)-(\sum_j(|\Sigma_j|-|\Xi_j|)))(n-\sigma+s+\frac{|\Sigma_i|+|\Xi_i|}{2})} = c^{\sum_i(\sum_j(|\Sigma_j|-|\Xi_j|))(r-1+t-\frac{|\Sigma_i|+|\Xi_i|}{2})}
\end{align*}
Therefore, 
\[
    A_p(t) = A'_p(t)\rho_i(w,t)^{-1} = A'_p(t)c^{-(n-\sigma+s(\Sigma_{k_{p-1}+1})+\sum_i|\Sigma_i|)(n-\sigma+i-1+t) + \frac{1}{2}(n-\sigma+s(\Sigma_{k_{p-1}+1})+\sum_i|\Sigma_i|)^2}
\]
and
\[
    B_p(t) = B'_p(t)\rho'_i(w,t)^{-1} = B'_p(t)c^{-(n-\sigma+s(\Sigma_{k_{p-1}+1})+\sum_i|\Xi_i|)(n-\sigma+i-1+t) + \frac{1}{2}(n-\sigma+s(\Sigma_{k_{p-1}+1})+\sum_i|\Xi_i|)^2}.
\]
The sum can be reduced to
\begin{align*}
    &A_p(t) + B_p(t) \\
    &= \left(\prod_i\rho_i'(w,t)^{-1}\right)\left(B_p'(-t)c^{\sum_r\left(\frac{\sum_j\Xi_j-\sum_j\Sigma_j}{2}\right)}c^{\sum_r\left(\sum_j(\Xi_j-\Sigma_j)(r-1+t-\frac{\Sigma_j+\Xi_j}{2})\right)} + B_p'(t)\right)\\
    &=\left(\prod_i\rho_i'(w,t)^{-1}\right)\left(B_p'(-t)c^{\sum_r\left(\sum_j(\Xi_j-\Sigma_j)(r-\frac{1}{2}+t-\frac{\Sigma_j+\Xi_j}{2})\right)} + B_p'(t)\right).
\end{align*}
The exponent summing over all $\sum_j(\Xi_j-\Sigma_j)\left(r-\frac{1}{2}+t-\frac{\Sigma_j+\Xi_j}{2}\right)$ is in fact equal to
\begin{align*}
    \sum_j(\Xi_j-\Sigma_j)\left(r-\frac{1}{2}+t-\frac{\Sigma_j+\Xi_j}{2}\right) &= \sum_{i\in \text{living elements}}(w(i)-w(m+n-\sigma+i))\\
    &\left(n-\sigma+2i-\frac{w(i)+w(m+n-\sigma+i)+1}{2}+t\right).
\end{align*}
Noting that the individual terms in the summation above can be rearranged into the form:
\begin{align*}
    &-\sum_i\frac{\left(n-\sigma+2i-w(i)\right)^2 - \left(n-\sigma+2i-w(m+n-\sigma+i)\right)^2}{2}\\
     + &\sum_i\frac{\left(n-\sigma+2i-w(i)\right) - \left(n-\sigma+2i-w(m+n-\sigma+i)\right)}{2}\\
     - t&\sum_i\left(\left(n-\sigma+2i-w(i)\right) - \left(n-\sigma+2i-w(m+n-\sigma+i)\right)\right).
\end{align*}
The first sum can be further rearranged into the form of $k^2 - (1-k)^2 = 2k-1$, which cancels out the second sum. Therefore,
\[
    A_p(t) + B_p(t) = \left(\prod_i\rho_i'(w,t)^{-1}\right)\left(c^{-t \alpha}B_p'(-t) + B_p'(t)\right).
\]
Since both $A_p'(t)$ and $B_p'(t)$ has a simple pole at $t=0$, and since $c^{-t\alpha}$ is holomorphic with zeroth order coefficient 1, we conclude that $A_p(t)+B_p(t)$ is holomorphic.
\end{proof}

\subsection{Other Factors and Proof of the Theorem}
Now we finish the calculation of the sum
\[
    R(w,\underline{s}) = \sum_{w'\in\mathcal{O}_s(w)} r(w',\underline{s})
\]
outside of the contributions of the living intervals. Recall that $i_w$ is the first index $i$ such that there exist a $j>i$ such that $w(i)>w(j)$. The location of the first living interval may affect $i_w$ when $1$ is a living element. When that happens, we are forced to require $m+n-\sigma+1 = m+1$, in which case $n = \sigma$. 
\begin{center}
        \begin{tikzpicture}[line width=10pt]
            \draw (0.5,0.5) node {$v_0$};
            \draw[color = green] (0,0) --(1.2,0);
            \draw (1.5,0.5) node {$u_1$};
            \draw[color = red] (1.2,0) --(1.9,0);
            \draw (2.5,0.5) node {$v_{1}$};
            \draw[color = green] (1.9,0) --(3,0);
            \draw (3.7,0.5) node {$u_{2}$};
            \draw[color = red] (3,0) --(4.6,0);
            \draw[color = green, opacity = 0.2] (4.6,0) --(6,0);
            \draw (5.2,0.5) node {$\ldots$};
            \draw (6.5,0.5) node {$u_i$};
            \draw[color = red] (6,0) --(7,0);
            \draw (7.5,0.5) node {$v_i$};
            \draw[color = green] (7,0) --(8,0);
            \draw (8.5,0.5) node {$\ldots$};
            \draw[color = red, opacity = 0.2] (8,0) --(9,0);
            \draw (9.5,0.5) node {$u_l$};
            \draw[color = red] (9,0) --(10,0);
        \end{tikzpicture}
\end{center}
In this case, if $1$ is a living element for $w$, $R(w,\underline{s})$ can be separated into two terms $R'(t)$ and $R''(t)$ which consist of the products of $L$-function factors with $i_w'$ for $w'\in \mathcal{O}_s(w)$ the same as the $i_{w_0}$ of the basepoint element $w_0$ or not, respectively. 
\[
    R'(t) = \sum_{\substack{w'\in\mathcal{O}_s(w)\\ i_{w'}= i_{w_0}}} r(w',\underline{s}) = \frac{\prod_{i\in \mathrm{head}\cup\mathrm{tail}} L(n-\sigma+2i-w_0(i)+t)\rho_i^{-1}(w_0,t)}{\prod_{i=v_0+1}^{m}L(n-\sigma+i+t)} \prod_{i=1}^r(A_i(t) + B_i(t)) .
\] 
and the sum of the terms $r(w',s)$ satisfying $i_{w'} > 1$ by
\[
    R''(t) = \sum_{\substack{w'\in\mathcal{O}_s(w)\\ i_{w'}\neq i_{w_0}}} r(w',\underline{s}) = \frac{\prod_{i\in \mathrm{head}\cup\mathrm{tail}} L(n-\sigma+2i-w_0(i)+t)\rho_i^{-1}(w_0,t)}{\prod_{i=1}^{m} L(n-\sigma+i+t)} B_0(t)\prod_{i=1}^r(A_i(t) + B_i(t)),
\] 
then
\[
    R(w,\underline{s}) = R'(t) + R''(t).
\]
In this case, there are no indices to the left of all the movable indices. The denominator is the product $L(1+t)\ldots L(m+t)$, and the tail does not contribute to any poles since $w_0(i)<2i$ on tails.\\

On the other hand, if $1$ is not a movable element, then $R(w,\underline{s})$ simply takes the form
\[
    R(w,\underline{s}) = \sum_{\substack{w'\in\mathcal{O}_s(w)}} r(w',\underline{s}) = \frac{\prod_{i\in \mathrm{head}\cup\mathrm{tail}} L(n-\sigma+2i-w(i)+t)\rho_i^{-1}(w,t)}{\prod_{i=1}^{m} L(n-\sigma+i+t)} \prod_{i=1}^r(A_i(t) + B_i(t)).
\]
We will have to calculate the contribution from the head and the tails separately.
\subsubsection{Head Factors}
Now we calculate the contribution to the poles from the head terms in the numerator. In Case I and II, by Lemma \ref{lemmahead}, we have
\begin{itemize}
    \item $n - \sigma + 2i - w(i) > i$ for $i\in R_1$,
    \item $n - \sigma + 2i - w(i) > i - |R_1|$ for $i\in R_2$,
    \item $\ldots$
    \item $n - \sigma + 2i - w(i) > i - \sum_{j=1}^{k-1}|R_j|$ for $i\in R_k$.
\end{itemize}
Therefore, since the head terms can be expressed as
\[
    \prod_{j=1}^k\prod_{i\in R_j} L(n-\sigma+2i-w(i)+t),
\]
the point $t=0$ is not a pole because all the $n-\sigma+2i-w(i) > 1$ for $i\in \cup_{j=1}^k R_j$. For Case III, by Lemma \ref{lemmahead2}, we have
\begin{itemize}
    \item $n-\sigma+2i-w(i) \leq i - (\sigma-n)$ for $i\in \{1,\ldots,\sigma-n\}$, and when $i = \sigma - n$ the inequality is strict,
    \item $n-\sigma+2i-w(i) \leq i - (\sigma-n) - |R_1'|$ for $i\in \sigma-n+R_1'$, and when $i = \sigma-n+r(R_1')$ the inequality is strict,
    \item $\ldots$
    \item $n-\sigma+2i-w(i) = i - (\sigma-n) - \sum_{j=1}^{k}|R_j'|$ for $i\in \sigma-n+R_k'$, and when $i = \sigma-n+r(R_k')$ we have $n-\sigma+2i-w(i) = 0$.
\end{itemize}
Therefore, in Case III when $\sigma > n$, the only possible contribution to the pole on the numerator occurs when $i = \sigma -n+r(R_k')$, and when $\sigma = n$ there will be no head factors. However, this pole will be cancelled by the denominator when $\sigma\geq n$ as in Case III since we allow $i = \sigma-n$ and $i=\sigma-n+1$ in the factors of the denominator: the denominator $\prod_{i=1}^m L(n-\sigma+i+t)$ has a pole of order 2 at $t=0$ when $\sigma > n$, and a pole of order 1 at $t=0$ when $\sigma = n$. In the case when there are head factors, the contribution to the pole $t=0$ on the numerator will be cancelled by the pole on the denominator.

\subsubsection{Tail Factors}
In the Cases I and III, the contribution to the poles from the tail factors can be calculated with Lemma \ref{lemmatail1}. Since by Lemma \ref{lemmatail1}, on each interval $S_k$, we have
\begin{itemize}
    \item $n - \sigma + 2i - w(i) = i - (q - (m+n-\sigma))$ for $i\in S_k$,
    \item $n - \sigma + 2i - w(i) \geq i - (q - (m+n-\sigma)) > 1$ for $i\in S_{k-1}$,
    \item $\ldots$
    \item $n - \sigma + 2i - w(i) > i - (q - (m+n-\sigma)) - \sum_{j=3}^{k}|S_j| > 1$ for $i\in S_1$,
\end{itemize}
where the strict inequality is due to the nonvanishing of $|S_j|$, it turns out that the only possible contribution to the pole at $t=0$ from the tail factors occurs at $i = (q - (m+n-\sigma))+1$, which is the only contribution to the poles of $R(w,\underline{s})$. However, as was discussed in the previous subsection, in the cases when $\sigma > n$ or when $\sigma = n$ and $m > n$, this pole will be cancelled by the denominator. When $m=n=\sigma$, there will be no head or tail factors. On the other hand, in Case II, by Lemma \ref{lemmatail2}, we have
\begin{itemize}
    \item $n-\sigma+2i-w(i) < i - (q' + |S_n'|) < 0$ for $i\in S_n'$,
    \item $n-\sigma+2i-w(i) < i - (q' + |S_n'|
    +|S_{n-1}'|) < 0$ for $i\in S_{n-1}'$,
    \item $\ldots$
    \item $n-\sigma+2i-w(i) < i - (q' + \sum_{j=1}^n|S_j'|
    ) < 0$ for $i\in S_{1}'$.
\end{itemize}
Therefore, there will be no contribution to the pole from the numerator of $R(w,\underline{s})$. Thus, the tail factors will contribute to the poles of $R(w,\underline{s})$ if and only if $0\leq\sigma < \min\{m,n\}$.

Thus, we have concluded the proof of Theorem \ref{thma} that the possible simple poles occur only when $\sigma \in\{0,1,\ldots,\min\{m,n\}-1\}$.

\subsection{The Residue of \texorpdfstring{$E(\cdot,\underline{s})$}{E(.,s)}}
Fixing any $\sigma\in\{0,1,\ldots,\min\{m,n\}-1\}$, now we will prove Corollary \ref{corb} of Theorem \ref{thma} which describes the residue of $E(\cdot,\underline{s})$ at $s = s_1-s_2 = \frac{m+n}{2}-\sigma$.
\begin{proof}
    For the character \[\underline{\Lambda} = (s_1,\ldots,s_m;t_1,\ldots,t_n),\] denote the full principal series induced from $P_{[1,\ldots,1]}$ by
    \[
        I(\underline{\Lambda}) = \mathrm{Ind}_{P_{m+n}}^{G_{m+n}}\left(\tau|\cdot|^{s_1}\boxtimes\ldots\boxtimes\tau|\cdot|^{s_m}\boxtimes \tau|\cdot|^{t_1}\boxtimes\ldots\boxtimes\tau|\cdot|^{t_n}\right).
    \]
    Assuming $\sigma\in\{0,1\ldots,\min\{n_1,n_2\}-1\}$, we define the following four Weyl group elements:
    \begin{align*} 
        w_1: &(1,2,\ldots,m;m+1,m+2,\ldots,m+n)\mapsto (m,m-1,\ldots,1;m+n,m+n-1,\ldots,m+1)\\
        w_\sigma: & (1,2,\ldots,m;m+1,m+2,\ldots,m+n)\mapsto (m+1,\ldots,m+n-\sigma; 1,\ldots,m;m+n-\sigma+1, \ldots,m+n)\\
        w_\sigma': &(1,2,\ldots,m;m+1,m+2,\ldots,m+n) \mapsto (1,2,\ldots,m;m+\sigma+1,\ldots,m+n;m+1,\ldots,m+\sigma)\\
        w_1' : & (1,2,\ldots,m;m+1,m+2,\ldots,m+n)\mapsto (m+n-\sigma,\ldots,1;m+n-\sigma+1,\ldots,m+n).
    \end{align*}
    These four elements satisfy the relation $w_\sigma w_1 = w_1' w_\sigma'$. Consider following four intertwining operators corresponding to these Weyl group elements:
    \begin{align*}
        \xymatrix{
            I(\underline{\Lambda}) \ar[rr]^{N(w_1,\underline{\Lambda})} \ar[d]_{N(w_\sigma',\underline{\Lambda})} & & I(w_1\underline{\Lambda}) \ar[d]^{N(w_\sigma,w_1\underline{\Lambda})}\\
            I(w_\sigma'\underline{\Lambda})\ar[rr]_{N(w_1',w_\sigma'\underline{\Lambda})}
            & & I(w_\sigma w_1\underline{\Lambda}),
        }
    \end{align*}
    The image of the normalized intertwining operator $N(w_\sigma w_1,\underline{s})$ in $I(w_\sigma w_1\underline{\Lambda})$ is the representation \[I_\sigma = \mathrm{Ind}_{P_{[m,n]}}^{G_{m+n}}\left(\Delta(\tau,m+n-\sigma)|\cdot|^{-\frac{m-n}{2}}\boxtimes \Delta(\tau,\sigma)|\cdot|^{\frac{m-n}{2}}\right),\]which is irreducible when $\sigma\in \{0,1,\ldots,\min\{m,n\}-1\}$ by \cite[I.7, I.11]{moeglin1989spectre} for archimedean places and \cite[Theorem 1.1]{tadic2014irreducibility} for the nonarchimedean places. Since $w_\sigma w_1 = w_1' w_\sigma'$, by the properties of the normalized intertwining operators
\[
    N(w_1,\underline{\Lambda}) = N(w_\sigma^{-1},w_1'w_\sigma'\underline{\Lambda}) N(w_1',w_\sigma'\underline{\Lambda}) N(w_\sigma',\underline{\Lambda}),
\]
and since $N(w_1,\underline{\Lambda})\neq 0$, we have $N(w_\sigma w_1,\underline{\Lambda})\neq 0$. The image of the intertwining operator $N(w_1,\underline{\Lambda})$ is isomorphic to the image $I(\tau,\underline{s})$ of the residue operator \[\mathrm{Res}_{(-\underline{\lambda_m}+s_1,-\underline{\lambda_n}+s_2)}=\prod_{1\leq i\leq m-1}(s_i-s_{i+1}-1)\prod_{1\leq j\leq n-1}(t_j-t_{j+1}-1)E(\cdot,\underline{\Lambda})\mid_{\underline{\Lambda}\to (-\underline{\lambda_m}+s_1,-\underline{\lambda_n}+s_2)}:I(\underline{\Lambda})\rightarrow I(w_1\underline{\Lambda})\] as a submodule in $I(w_1\underline{\Lambda})$. By calculating the poles of the normalized intertwining operator $N(w_\sigma,w_1\underline{\Lambda})$, the residue operator $\mathrm{Res}_{s_1-s_2 = \frac{m+n}{2}-\sigma} E(\cdot,\underline{s})$ kills all the $M(w,\underline{\Lambda})f$ terms in the constant term formula of $E(\cdot,\underline{\Lambda})$ except for those corresponding to the Weyl group element $w=w_2 w_1$ with $w_2$ satisfying
\[
    w_2(m+1)<w_2(m+2)<\ldots<w_2(m+n-\sigma)<w_2(1)<w_2(2)<\ldots<w_2(m).
\]
as well as (\ref*{defprop}). For any such $w_2$, $N(w_2w_\sigma^{-1},w_\sigma w_1\underline{\Lambda})$ maps $I_\sigma$ either isomorphically onto its image or to 0, and thus, the constant term of $\mathrm{Res}_{s_1-s_2 = \frac{m+n}{2}-\sigma}\mathrm{Res}_{(-\underline{\lambda_m}+s_1,-\underline{\lambda_n}+s_2)}E(f,\underline{\Lambda})$ is equal to the constant term of the Eisenstein series constructed from $M(w_\sigma w_1,w_\sigma w_1\underline{\Lambda})f$ as a vector in $I_\sigma$ up to a nonzero constant scalar. Thus, the automorphic representation generated by $\mathrm{Res}_{s_1-s_2 = \frac{m+n}{2}-\sigma}\mathrm{Res}_{(-\underline{\lambda_m}+s_1,-\underline{\lambda_n}+s_2)}E(\cdot,\underline{\Lambda})$ is isomorphic to $I_\sigma$.
\end{proof}

\appendix

\section*{Examples}
\subsection{Case \texorpdfstring{$m=n=2$}{m=n=2}}
For $\sigma = 0$, the two segments $\Delta_1=\left[\frac{1}{2},\frac{3}{2}\right]$ and $\Delta_2=\left[-\frac{3}{2},-\frac{1}{2}\right]$ are \emph{juxtaposed}. The orbits and the corresponding $R(w,\underline{s})$ are listed in the following table:\\
\begin{center}
    \begin{tabular}{ccc}
        Orbit & $\rho_{m,n}$ & $R(w,\frac{m+n}{2}-\sigma+t)$\\
        ${e}$ & $\left\{\frac{1}{2},\frac{3}{2},-\frac{3}{2},-\frac{1}{2}\right\}$ & $1$\\
        ${(1324)}$ & $\left\{\frac{1}{2},-\frac{3}{2},\frac{3}{2},-\frac{1}{2}\right\}$ & $\frac{c^{-t-\frac{5}{2}} L(t+3)}{L(t+4)}$\\
        ${(1342)}$ & $\left\{\frac{1}{2},-\frac{3}{2},-\frac{1}{2},\frac{3}{2}\right\}$ & $\frac{c^{-2 t-4} L(t+2)}{L(t+4)}$\\
        ${(3124)}$ & $\left\{-\frac{3}{2},\frac{1}{2},\frac{3}{2},-\frac{1}{2}\right\}$ & $\frac{c^{-2 t-4} L(t+2)}{L(t+4)}$\\
        ${(3142)}$ & $\left\{-\frac{3}{2},\frac{1}{2},-\frac{1}{2},\frac{3}{2}\right\}$ & $\frac{c^{-3t-\frac{11}{2}} L(t+2)^2}{L(t+3) L(t+4)}$\\
        ${(3412)}$ & $\left\{-\frac{3}{2},-\frac{1}{2},\frac{1}{2},\frac{3}{2}\right\}$ & $\frac{c^{-4 t-6} L(t+1) L(t+2)}{L(t+3) L(t+4)}$
    \end{tabular}
\end{center}

\noindent For $\sigma = 1$, the poles of $r(w,\underline{s})$ in one of the orbits $\{(3142),(3412)\}$ cancel:\\
\begin{center}
    \begin{tabular}{ccc}
        Orbit & $\rho_{m,n}$ & $R(w,\frac{m+n}{2}-\sigma+t)$\\
        ${e}$ & $\{0,1,-1,0\}$ & $1$\\
        ${(1324)}$ & $\{0,-1,1,0\}$ & $\frac{c^{-t-\frac{3}{2}} L(t+2)}{L(t+3)}$\\
        ${(1342)}$ & $\{0,-1,0,1\}$ & $\frac{c^{-2 t-2} L(t+1)}{L(t+3)}$\\
        ${(3124)}$ & $\{-1,0,1,0\}$ & $\frac{c^{-2 t-2} L(t+1)}{L(t+3)}$\\
        ${(3142),(3412)}$ & $\{-1,0,0,1\}$ & $\frac{c^{-3 t-\frac{5}{2}} L(t+1) \left(L(1-t)+L(1+t)\right)}{L(t+2) L(t+3)}$\\
    \end{tabular}
\end{center}

\noindent For $\sigma = 2$, more cancellations occur:\\
\begin{center}
    \begin{tabular}{ccc}
        Orbit & $\rho_{m,n}$ & $R(w,\frac{m+n}{2}-\sigma+t)$\\
        ${e,(3412)}$ & $\left\{-\frac{1}{2},\frac{1}{2},-\frac{1}{2},\frac{1}{2}\right\}$ & $\frac{c^{-2 t} \left(c^{2 t} L(t+1) L(t+2)+L(1-t) L(2-t)\right)}{L(t+1) L(t+2)}$\\
        ${(1324),(1342),(3124),(3142)}$ & $\left\{-\frac{1}{2},-\frac{1}{2},\frac{1}{2},\frac{1}{2}\right\}$ & $\frac{c^{-t-\frac{1}{2}} (L(1-t)+L(t+1))^2}{L(t+1) L(t+2)}$
    \end{tabular}
\end{center}

\subsection{Case \texorpdfstring{$m=2, n=3$}{m=2,n=3}}
For $\sigma = 2$, the $R(w,\underline{s})$'s are displayed in the following table: 
\begin{center}
    \begin{tabular}{ccc}
        Orbit & $\rho_{m,n}$ & $R(w,\frac{m+n}{2}-\sigma+t)$\\
        ${e}$ & $\left\{-\frac{1}{4},\frac{3}{4},-\frac{5}{4},-\frac{1}{4},\frac{3}{4}\right\}$ & $1$\\
        ${(13245)}$ & $\left\{-\frac{1}{4},-\frac{5}{4},\frac{3}{4},-\frac{1}{4},\frac{3}{4}\right\}$ & $\frac{c^{-t-\frac{3}{2}} L(t+2)}{L(t+3)}$\\
        ${(13425),(13452)}$ & $\left\{-\frac{1}{4},-\frac{5}{4},-\frac{1}{4},\frac{3}{4},\frac{3}{4}\right\}$ & $\frac{c^{-2 t-2} (L(1-t)+L(t+1))}{L(t+3)}$\\
        ${(31245),(34512)}$ & $\left\{-\frac{5}{4},-\frac{1}{4},\frac{3}{4},-\frac{1}{4},\frac{3}{4}\right\}$ & $\frac{c^{-4 t-2} \left(c^{2 t} L(t+1) L(t+2)+L(1-t) L(2-t)\right)}{L(t+2) L(t+3)}$\\
        ${(31425),(31452),(34125),(34152)}$ & $\left\{-\frac{5}{4},-\frac{1}{4},-\frac{1}{4},\frac{3}{4},\frac{3}{4}\right\}$ & $\frac{c^{-3 t-\frac{5}{2}} (L(1-t)+L(t+1))^2}{L(t+2) L(t+3)}$.
    \end{tabular}
\end{center}

\subsection{Case \texorpdfstring{$m=3, n=2$}{m=3,n=2}}
Similar to the previous subsection, for $\sigma = 1$, 
\begin{center}
    \begin{tabular}{ccc}
        Orbit & $\rho_{m,n}$ & $R(w,\frac{m+n}{2}-\sigma+t)$\\
        e & $\left\{-\frac{1}{4},\frac{3}{4},\frac{7}{4},-\frac{5}{4},-\frac{1}{4}\right\}$ & $1$\\
        $(12435)$ & $\left\{-\frac{1}{4},\frac{3}{4},-\frac{5}{4},\frac{7}{4},-\frac{1}{4}\right\}$ & $\frac{c^{-t-\frac{5}{2}} L(t+3)}{L(t+4)}$\\
        $(12453)$ & $\left\{-\frac{1}{4},\frac{3}{4},-\frac{5}{4},-\frac{1}{4},\frac{7}{4}\right\}$ & $\frac{c^{-2 t-4} L(t+2)}{L(t+4)}$\\
        $(14235)$ & $\left\{-\frac{1}{4},-\frac{5}{4},\frac{3}{4},\frac{7}{4},-\frac{1}{4}\right\}$ & $\frac{c^{-2 t-4} L(t+2)}{L(t+4)}$\\
        $(14253)$ & $\left\{-\frac{1}{4},-\frac{5}{4},\frac{3}{4},-\frac{1}{4},\frac{7}{4}\right\}$ & $\frac{c^{-3 t-\frac{11}{2}} L(t+2)^2}{L(t+3) L(t+4)}$\\
        $(14523)$ & $\left\{-\frac{1}{4},-\frac{5}{4},-\frac{1}{4},\frac{3}{4},\frac{7}{4}\right\}$ & $\frac{c^{-4 t-6} L(t+1) L(t+2)}{L(t+3) L(t+4)}$\\
        $(41235)$ & $\left\{-\frac{5}{4},-\frac{1}{4},\frac{3}{4},\frac{7}{4},-\frac{1}{4}\right\}$ & $\frac{c^{-3 t-\frac{9}{2}} L(t+1)}{L(t+4)}$\\
        $(41253)$ & $\left\{-\frac{5}{4},-\frac{1}{4},\frac{3}{4},-\frac{1}{4},\frac{7}{4}\right\}$ & $\frac{c^{-4 t-6} L(t+1) L(t+2)}{L(t+3) L(t+4)}$\\
        $(41523),(45123)$ & $\left\{-\frac{5}{4},-\frac{1}{4},-\frac{1}{4},\frac{3}{4},\frac{7}{4}\right\}$ & $\frac{c^{-5 t-\frac{13}{2}} L(t+1) (L(1-t)+L(t+1))}{L(t+3) L(t+4)}$
    \end{tabular}
\end{center}

\subsection{Case $m=3, n=3$}
For $\sigma = 3$, we have:
\begin{center}
    \begin{tabular}{ccc}
        Orbit & $\rho_{m,n}$ & $R(w,\frac{m+n}{2}-\sigma+t)$\\
        ${e,(456123)}$ & $\{-1,0,1,-1,0,1\}$ & $\frac{c^{-6 t} \left(c^{6 t} L(t+1) L(t+2) L(t+3)+L(1-t) L(2-t) L(3-t)\right)}{L(t+1) L(t+2) L(t+3)}$\\
        ${(124356),(451623)}$ & $\{-1,0,-1,1,0,1\}$ & $\frac{c^{-5 t-\frac{3}{2}} \left(c^{4 t} L(t+1) L(t+2)^2+L(1-t) L(2-t)^2\right)}{L(t+1) L(t+2) L(t+3)}$ \\
        $\left\{\begin{array}{c}(124536),(124563),\\(451236),(451263)\end{array}\right\}$ &$\{-1,0,-1,0,1,1\}$& $\frac{c^{-4 t-2} (L(1-t)+L(t+1)) \left(c^{2 t} L(t+1) L(t+2)+L(1-t) L(2-t)\right)}{L(t+1) L(t+2) L(t+3)}$\\
        $\left\{\begin{array}{c}(142356),(145623),\\(412356),(415623)\end{array}\right\}$ &$\{-1,-1,0,1,0,1\}$&$\frac{c^{-4 t-2} (L(1-t)+L(t+1)) \left(c^{2 t} L(t+1) L(t+2)+L(1-t) L(2-t)\right)}{L(t+1) L(t+2) L(t+3)}$\\
        $\left\{\begin{array}{c}
            (142536),(142563),\\(145236),(145263)\\
            (412536),(412563),\\(415236),(415263)
        \end{array}\right\}$ &$\{-1,-1,0,0,1,1\}$&$\frac{c^{-3 t-\frac{5}{2}} (L(1-t)+L(t+1))^3}{L(t+1) L(t+2) L(t+3)}$
    \end{tabular}
\end{center}

\bibliography{main}
\bibliographystyle{alpha}

\end{document}